\documentclass[11pt]{article}

\usepackage[a4paper,margin=1in]{geometry}
\usepackage{amsmath,amsthm}
\usepackage{enumitem}

\usepackage{amssymb, amsfonts}
\usepackage{amsmath,comment,algorithm}
\allowdisplaybreaks
\usepackage{subcaption}

\usepackage{tikz}
\usetikzlibrary{shapes.arrows,shadings,shadows,decorations.pathreplacing,decorations.markings}

\usepackage{hyperref}           
\usepackage{url}
\usetikzlibrary{positioning,fit,calc}

\newtheorem{theorem}{Theorem}
\newtheorem{lemma}{Lemma}
\theoremstyle{remark}
\newtheorem{remark}{Remark}
\newcommand{\R}{\mathbb{R}}

\begin{document}

\title{Adjoint-Based Bayesian Uncertainty Quantification for PDE-Constrained Inverse Problems with Application to Semiconductor Imaging\thanks{B.~M.~Afkham is supported by the Research Council of Finland under grant numbers 371523, and H.~Yazdanian is supported by the Finnish Ministry of Education and Culture's Pilot for Doctoral Programmes (Pilot project Mathematics of Sensing, Imaging and Modelling). L.~Taghizadeh acknowledges support from an Elise-Richter grant, funded by the Austrian Science Fund (FWF), grant DOI 10.55776/V1000.}}

\author{
Hassan Yazdanian\thanks{Corresponding author. Email: \texttt{hassan.yazdanian@oulu.fi}. ORCID: \href{https://orcid.org/0000-0002-6372-348X}{0000-0002-6372-348X}.}
\and
Leila Taghizadeh\thanks{Email: \texttt{leila.taghizadeh@tuwien.ac.at}. ORCID: \href{https://orcid.org/0000-0001-6040-1787}{0000-0001-6040-1787}.}
\and
Babak Maboudi Afkham\thanks{Email: \texttt{babak.maboudi@oulu.fi}. ORCID: \href{https://orcid.org/0000-0003-3203-8874}{0000-0003-3203-8874}.}
}

\date{
\small
\textit{Hassan Yazdanian and Babak Maboudi Afkham}\\
Research Unit of Mathematical Sciences, University of Oulu,\\
Pentti Kaiteran katu 1, Linnanmaa, 90014 Oulu, Finland
\\[1ex]
\textit{Leila Taghizadeh}\\
Institute of Analysis and Scientific Computing, TU Wien,\\
Wiedner Hauptstraße 8--10, 1040 Wien, Austria
}

\maketitle

\begin{abstract}
We formulate a Bayesian framework for reconstructing doping profiles in pn-junction semiconductor devices from boundary flux measurements. The unknown doping field is modeled as a nearly piecewise-constant function characterized by an unknown interface and two unknown plateau concentrations, leading to a nonlinear ill-posed inverse problem governed by a Poisson--Boltzmann-type equation.

To represent this structure while enabling efficient gradient-based inference, we introduce a pushforward prior constructed by mapping a smooth latent Gaussian field with Mat\'ern-type covariance through a sigmoid transformation. The latent field is parameterized by a truncated Karhunen--Lo\`eve expansion, while the two piecewise-constant levels are represented by scalar plateau parameters. The resulting prior yields differentiable approximations of piecewise-constant fields with controllable interface sharpness. We establish well-posedness of the Bayesian formulation by proving Lipschitz continuity of the forward map and Hellinger stability of the posterior.

We then sample the posterior using the No-U-Turn Sampler (NUTS), with gradients computed by the adjoint method. Numerical experiments show that the combination of the proposed prior and NUTS provides more efficient posterior exploration than the dimension-robust preconditioned Crank--Nicolson (pCN) sampler, yielding one to two orders of magnitude larger effective sample sizes. In the known-plateau setting, the method reconstructs both planar and curved interfaces and provides spatially resolved uncertainty quantification (UQ). When the interface geometry and plateau concentrations are inferred jointly, posterior correlations reveal structural non-identifiability. These results demonstrate the effectiveness of combining pushforward priors with adjoint-gradient-based sampling for reliable UQ in nonlinear partial differential equation-constrained inverse problems with sharp interfaces.

\noindent\textbf{Keywords:}
Bayesian inverse problems; pn-junction; Hamiltonian Monte Carlo; uncertainty quantification; adjoint method

\end{abstract}

\section{Introduction}
\label{intro}

Nanoscale semiconductor devices form the basis of many modern electronic technologies \cite{esseni2011nanoscale,lundstrom2006nanoscale}. Their operation relies on modifying the electrical properties of semiconducting materials, such as silicon, by introducing donor ($n$-type) or acceptor ($p$-type) impurities, a process known as doping. The resulting spatial distribution of impurities, called the \emph{doping profile}, determines the internal electric field, carrier transport, and boundary response of the device.

In pn-junction diodes, the doping profile is often characterized by two approximately constant plateau concentrations corresponding to the $p$- and $n$-type regions, separated by an interface \cite{shockley1949theory,shockley1951p,peiner1995doping,schubert1996delta}. Consequently, the doping profile can be viewed as a nearly \emph{piecewise-constant} field, characterized by the interface geometry and the plateau concentration in each region. Since the doping profile is not directly measurable after fabrication, it must be inferred from indirect boundary observations. This leads to a nonlinear and highly ill-posed PDE-constrained inverse problem governed by semiconductor drift--diffusion models or their equilibrium Poisson--Boltzmann-type reductions \cite{burger2001identification,cheng2011recovering,leitao2006semiconductors}.

% A substantial body of work has been devoted to the modeling and simulation of semiconductor devices \cite{markowich2012semiconductor,markowich1985stationary,selberherr1984analysis,jungel2009transport}, as well as to the numerical solution and stochastic modeling of charge transport problems \cite{taghizadeh2017optimal,khodadadian2018optimal}. In contrast, the corresponding inverse problems have received comparatively less attention, with only a limited number of works addressing parameter identification and Bayesian reconstruction in semiconductor models \cite{burger2001identification,leitao2006semiconductors,stadlbauer2019bayesian,taghizadeh2025bayesian,cossettini2019determination}. Owing to the nonlinear and ill-posed nature of these inverse problems, uncertainty quantification (UQ) plays a central role in assessing the reliability and stability of inferred parameters, particularly in the presence of measurement noise and incomplete boundary observations. Moreover, the resulting objective functions are typically non-convex, making it essential not only to compute parameter estimates but also to characterize the uncertainty associated with them. However, effective UQ for PDE-constrained inverse problems involving piece-wise constant parameters remains computationally challenging, since standard approaches such as Markov chain Monte Carlo (MCMC) methods often become prohibitively expensive due to the high dimensionality and repeated PDE solves required by the underlying model.

A substantial body of work has addressed the modeling and simulation of semiconductor devices \cite{markowich2012semiconductor,markowich1985stationary,selberherr1984analysis,jungel2009transport}, as well as numerical and stochastic methods for semiconductor transport models \cite{taghizadeh2017optimal,khodadadian2018optimal}. By contrast, inverse problems for semiconductor models have received less attention, with only a few works addressing parameter identification and doping profile reconstruction \cite{burger2001identification,leitao2006semiconductors,stadlbauer2019bayesian,taghizadeh2025bayesian,cossettini2019determination}. In this setting, uncertainty quantification (UQ) is essential for assessing the reliability of doping profiles reconstructed from noisy, incomplete boundary data.

Bayesian inversion provides a principled framework for UQ in ill-posed inverse problems by incorporating prior information through a \emph{prior} distribution and returning a probability distribution over the unknown parameters rather than a single point estimate \cite{stuart2010inverse,kaipio2005statistical,smith2024uncertainty}. This distribution describes the doping profile conditioned on the boundary measurements and is referred to as the \emph{posterior} distribution. Statistical quantities derived from the posterior, such as the posterior mean and variance, provide quantitative measures of uncertainty and enable assessment of the credibility of the reconstructed doping profile. These quantities can be estimated by sampling from the posterior distribution using Markov chain Monte Carlo (MCMC) methods. However, MCMC sampling can be computationally demanding for PDE-constrained inverse problems, since each posterior sample may require the solution of a nonlinear forward PDE. Moreover, successive MCMC samples are generally correlated, so a large number of PDE solves may be needed to obtain a sufficient number of effectively independent samples. This motivates the development of more efficient strategies for posterior exploration in Bayesian PDE-constrained inverse problems~\cite{martin2012stochastic,bui2014solving,beskos2017geometric}.

%Each posterior evaluation requires the solution of a nonlinear forward PDE, and gradient-free Markov chain Monte Carlo (MCMC) methods may require many highly correlated samples, especially in high-dimensional or strongly data-constrained settings~\cite{martin2012stochastic,bui2014solving,beskos2017geometric}.

% Adjoint methods have made gradient-based MCMC practical for many PDE-constrained inverse problems. They allow derivatives of the data-misfit or posterior density to be computed at a cost that is largely independent of the parameter dimension. They have been combined with Hamiltonian Monte Carlo (HMC), Riemannian manifold HMC, stochastic Newton MCMC, and function-space MCMC methods to improve posterior exploration in large-scale problems~\cite{martin2012stochastic,bui2014solving,beskos2017geometric,bui2016fem}. Applications include seismic imaging, wave propagation, and tomographic inverse problems~\cite{fichtner2019hamiltonian,koch2020adjoint,zunino2023hmclab}. In contrast, Bayesian UQ for inverse problems with sharp interfaces and approximately piecewise-constant parameters remains less explored.

Adjoint methods \cite{hinze2008optimization} provide one such strategy. They have made gradient-based optimization practical for many PDE-constrained inverse problems by enabling derivatives of the data-misfit or log-posterior to be computed at a cost that is largely independent of the parameter dimension. More recently, adjoint techniques have also been incorporated into gradient-based MCMC algorithms, including Hamiltonian Monte Carlo (HMC), Riemannian manifold HMC, stochastic Newton MCMC, and function-space MCMC, to improve the efficiency of posterior exploration in Bayesian PDE inverse problems~\cite{martin2012stochastic,bui2014solving,beskos2017geometric,bui2016fem}. While these methods have been successfully applied to problems such as seismic imaging, wave propagation, and tomographic inversion~\cite{fichtner2019hamiltonian,koch2020adjoint,zunino2023hmclab}, the use of adjoint-based MCMC for Bayesian PDE inverse problems remains comparatively limited relative to the widespread adoption of adjoint-based optimization methods. At the same time, the benefit of gradient-based sampling is problem dependent, especially for edge-preserving priors, where the posterior structure may require tailored sampling strategies~\cite{uribe2023horseshoe}.

%Adjoint methods have made gradient-based optimization practical for many PDE-constrained inverse problems, since they allow derivatives of the data-misfit or log-posterior to be computed at a cost that is largely independent of the parameter dimension. \bbk{Beside some recent advances for adjoint in combination with} Hamiltonian Monte Carlo (HMC), Riemannian manifold HMC, stochastic Newton MCMC, and function-space MCMC methods to improve posterior exploration problems~\cite{martin2012stochastic,bui2014solving,beskos2017geometric,bui2016fem}, with some applications in seismic imaging, wave propagation, and tomographic inverse problems~\cite{fichtner2019hamiltonian,koch2020adjoint,zunino2023hmclab}, \bbk{gradient-based MCMC methods for Bayesian PDE inverse problems is comparatively less widespread.}

% Motivated by these developments, we revisit the Bayesian formulation of the pn-junction inverse problem and reformulate it in a manner that enables the efficient use of gradient-based methods. In particular, we introduce a differentiable prior model that remains effective for representing piecewise-constant doping profiles. Furthermore, we demonstrate how adjoint-based gradient computations can be leveraged for posterior exploration and uncertainty quantification, through Hamiltonian Monte Carlo methods \cite{neal2011mcmc}, aspects that have received comparatively limited attention in the PDE-based inverse problem literature.

Motivated by these developments, we revisit the Bayesian formulation of the pn-junction inverse problem and reformulate it to enable efficient use of gradient-based sampling. A key difficulty is the choice of prior. Standard Gaussian priors, widely used in Bayesian inversion, are well-suited to smooth unknown fields but may oversmooth sharp interfaces. Conversely, piecewise-constant priors represent discontinuities more naturally, but their lack of differentiability makes them difficult to combine with gradient-based samplers.

% This motivates differentiable prior constructions that approximate piecewise-constant fields while retaining a parameterization suitable for adjoint-based inference. In particular, we introduce a pushforward prior that maps a smooth Gaussian latent field through a sigmoid transformation, yielding nearly piecewise-constant doping profiles with controllable interface sharpness. We then demonstrate how adjoint-based gradient computations can be leveraged for posterior exploration using the No-U-Turn Sampler (NUTS)~\cite{hoffman2014no}, an adaptive variant of HMC. Compared with other gradient-based samplers, such as Langevin-type methods~\cite{roberts1996exponential,roberts1998optimal}, NUTS can exploit Hamiltonian dynamics and adapt the integration path length automatically, which can improve exploration in high-dimensional and correlated posteriors.

This motivates differentiable prior constructions that approximate piecewise-constant fields while retaining a parameterization suitable for adjoint-based inference. In particular, we introduce a pushforward prior that maps a smooth Gaussian latent field with Mat'{e}rn-type covariance through a sigmoid transformation, yielding nearly piecewise-constant doping profiles with controllable interface sharpness. We then demonstrate how adjoint-based gradient computations can be leveraged for posterior exploration using the No-U-Turn Sampler (NUTS)~\cite{hoffman2014no}, an adaptive variant of HMC. Compared with Langevin-type methods \cite{roberts1996exponential,roberts1998optimal}, NUTS can generate longer-distance proposals and automatically adapt the integration path length, making it well suited to the posterior geometries considered here.

The analytical foundation of the proposed formulation is provided by a well-posedness result for the Bayesian inverse problem. Using the uniform ellipticity of the diffusion operator and the monotonicity of the nonlinear Poisson--Boltzmann term, we establish Lipschitz continuity of the forward map. This implies that the posterior is well-defined, absolutely continuous with respect to the prior, and stable under perturbations of the observed data in the Hellinger metric. Complementing this theoretical result, numerical experiments on synthetic data demonstrate the effectiveness of the proposed framework for posterior exploration in the pn-junction inverse problem. In particular, we show that, for the proposed prior and posterior geometry, the NUTS-based approach substantially reduces sample autocorrelation compared with the dimension-robust preconditioned Crank--Nicolson (pCN) sampler~\cite{cotter2013mcmc}. 

The main contributions of this work are summarized as follows:
\begin{itemize}
\renewcommand\labelitemi{$\bullet$}

\item A Bayesian PDE-constrained formulation for reconstructing piecewise-constant pn-junction doping profiles, including joint inference of interface geometry and plateau concentrations, with the latter entering both the interior source term and the Dirichlet boundary data.

\item A differentiable sigmoid pushforward prior with controllable interface sharpness, suitable for gradient-based posterior sampling.

\item An adjoint-based gradient formulation for the semiconductor inverse problem, enabling NUTS-based posterior sampling.

\item A well-posedness result proving Lipschitz continuity of the forward map and Hellinger stability of the posterior.

\item Numerical evidence that NUTS improves posterior exploration over pCN in effective sample size and sample autocorrelation.

\end{itemize}

The remainder of the paper is organized as follows. Sections~\ref{s:forward problem}--\ref{sec:Bayesian_formulation} introduce the semiconductor forward model, its finite element discretization, and the Bayesian inverse problem formulation. Section~\ref{G-inference} derives the adjoint-based gradient formulation and presents the NUTS algorithm. Numerical experiments comparing NUTS and pCN for several reconstruction scenarios are reported in Section~\ref{sec:numerical}, followed by concluding remarks in Section~\ref{s:conclusions}.

\section{The Mathematical Formulation of pn-Junction Diodes}\label{s:forward problem}

% In this section, we introduce a mathematical model for pn-junction diodes formulated as a nonlinear system of partial differential equations. In the steady-state regime, this system takes the form of a coupled nonlinear elliptic problem describing the electrostatic potential and carrier transport. We then describe the associated inverse problem, in which a voltage is applied on the boundary via electrodes and the resulting current response is measured. The goal is to reconstruct the underlying doping profile from these boundary measurements. 

In this section, we introduce a reduced equilibrium model for pn-junction doping reconstruction. The electrostatic potential is described by a nonlinear Poisson--Boltzmann-type elliptic equation, with the doping profile acting as a source term. The corresponding inverse problem consists of reconstructing the doping profile from boundary flux measurements over prescribed electrodes. Thus, the model is best viewed as a semiconductor-inspired nonlinear elliptic inverse source problem, rather than a full carrier-transport model.

%%%%%%%%%%%%%%%%%%%%%%%%%%%%%%%%%%%%%%%%%%%
\subsection{Doping Profile Reconstruction as an Inverse Problem}\label{ss:doping}
%%%%%%%%%%%%%%%%%%%%%%%%%%%%%%%%%%%%%%
We consider \(\Omega\subset \mathbb R^2\) to be a square spatial domain representing a two-dimensional cross-section of the pn-junction diode. The electrical properties of a semiconductor device are modified by introducing donor and acceptor impurities through a process known as doping. In a pn-junction diode, these impurities give rise to two regions with different effective charge concentrations, referred to as the n-type and p-type regions. We denote these subdomains by \(\Omega_{\mathrm n}\) and \(\Omega_{\mathrm p}\), respectively, with
\(\Omega = \Omega_{\mathrm n}\cup\Omega_{\mathrm p}\), and assume that they are separated by a continuous and differentiable interface \(\Gamma\), as illustrated in Fig.~\ref{fig:pn-junction}. The spatial distribution of ionized dopants is described by the doping profile \(c\in L^\infty(\Omega;\mathbb{R})\), which is modeled here as a piecewise-constant function of the form
\begin{equation}
c(\boldsymbol{x}) =
\begin{cases}
c_{\mathrm n} & \boldsymbol{x}\in \Omega_{\mathrm n},\\
c_{\mathrm p} & \boldsymbol{x}\in \Omega_{\mathrm p},
\end{cases}
\label{eq:doping_profile}
\end{equation}
where \(c_{\mathrm n},c_{\mathrm p}\in \mathbb R\) denote the plateau doping levels in the n- and p-type regions, respectively, with \(c_{\mathrm p}<0<c_{\mathrm n}\) under the present sign convention. The inverse problem considered in this work is to recover this doping profile, including the interface geometry and the plateau values, from indirect boundary flux measurements.

Let \(u \in H^1(\Omega;\mathbb{R})\) denote the dimensionless electrostatic potential. Under thermal equilibrium conditions, \(u\) is modeled by a nonlinear Poisson--Boltzmann-type equation \cite{taghizadeh2025bayesian}, in which the doping profile acts as a source term representing the net fixed charge due to ionized dopants. Assuming the boundary $\partial \Omega$ decomposes as $\partial \Omega = \partial \Omega_1 \cup \partial \Omega_2 \cup \partial \Omega_3$, The potential satisfies prescribed Dirichlet boundary values on \(\partial \Omega_2\) and \(\partial \Omega_3\), determined by the equilibrium potentials associated with the p- and n-type plateau values, while the remaining boundary \(\partial \Omega_1\) is assumed to be insulating. Thus, \(u\) is described by the nonlinear boundary value problem
\begin{equation} \label{eq:semi-PDE-strong}
\begin{aligned}
& -\nabla\cdot(\epsilon\nabla u(\boldsymbol x)) + 2\delta^2 \sinh(u(\boldsymbol{x}) ) = c(\boldsymbol{x}), \qquad &&\boldsymbol{x} \in \Omega, \\
& \nabla u\cdot \boldsymbol{n} = 0, &&\boldsymbol{x} \in \partial \Omega_1, \\
& u(\boldsymbol x) = f_1(\boldsymbol{x}) = \operatorname{arcsinh}(c_{\mathrm p}/2\delta^2), && \boldsymbol{x} \in \partial \Omega_2, \\
& u(\boldsymbol x) = f_2(\boldsymbol{x}) = \operatorname{arcsinh}(c_{\mathrm n}/2\delta^2), && \boldsymbol{x} \in \partial \Omega_3 .
\end{aligned}
\end{equation}
Here, $c$ is defined in \eqref{eq:doping_profile}, \(\epsilon>0\) denotes the permittivity of the semiconductor material, and \(\delta>0\) is a scaling parameter related to the intrinsic carrier density. 

The dependence of the Dirichlet data on the plateau doping levels is an important feature of the semiconductor model. In several earlier formulations of inverse doping problems, the doping profile is assumed to be known on the boundary \cite{burger2001identification,burger2004inverse}. Under such assumptions, the boundary values entering the forward problem are fixed. In contrast, when \(c_{\mathrm p}\) and \(c_{\mathrm n}\) are treated as unknowns, the Dirichlet data in \eqref{eq:semi-PDE-strong} become parameter-dependent and must be inferred consistently with the interior doping profile. This additional dependence will be accounted for in the Bayesian formulation and in the adjoint-based gradient computation below.

\begin{figure}
    \centering
    \begin{tikzpicture}[scale=3.5, every node/.style={font=\small}]

      % Domain
      \draw[thick] (0,0) rectangle (1,1);
    
      % Electrodes / measurement facets on top boundary
      \foreach \x in {0.06,0.16,...,0.96, 0.97}
      {
        \draw[red, ultra thick] (\x-0.025,1.0) -- (\x+0.025,1.0);
      }
    
      % Label for measurement facets
      % \node[red, above=5pt] at (0.5,1.0) {\scriptsize measurement facets};
    
      % Curved interface Gamma
      \draw[thick]
        (0,0.40)
        .. controls (0.25,0.85) and (0.65,0.20) ..
        (1,0.73)
        node[midway, above=2pt] {$\Gamma$};
    
      % Boundary labels
      \node[left] at (0,0.5) {$\partial\Omega_1$};
      \node[right] at (1,0.5) {$\partial\Omega_1$};
      \node[below] at (0.5,0) {$\partial\Omega_2$};
      \node[above=1pt] at (0.5,1) {$\partial\Omega_3$};
    
      % Region labels
      \node at (0.28,0.82) {$\left(\Omega_{\mathrm n}, c_{\mathrm n}\right)$};
      \node at (0.70,0.22) {$\left(\Omega_{\mathrm p}, c_{\mathrm p}\right)$};
    
    \end{tikzpicture}
    \caption{A two-dimensional pn-junction with a piecewise-constant doping function $c(\boldsymbol{x})$. The red segments on the top boundary indicate the measurement electrodes.}
    \label{fig:pn-junction}
\end{figure}

We now define the boundary observations used in the inverse problem. We consider a mathematical measurement operator defined in terms of the normal flux of the electrostatic potential on a prescribed part of the boundary. Specifically, measurements are taken on a collection of disjoint boundary segments \[
\{I_{\ell}\}_{\ell=1}^{N_{\mathrm{meas}}} \subset \partial\Omega_3,
\]
and the corresponding data vector
\(\boldsymbol{y}\in\mathbb{R}^{N_{\mathrm{meas}}}\) is defined by
\begin{equation} \label{eq:current}
    [\boldsymbol{y}]_\ell
    =
    \int_{I_\ell} \nabla u \cdot \boldsymbol{n} \, ds
    =
    \int_{x_{\ell}^1}^{x_{\ell}^2}
    \partial_y u(x,1)\, dx,
    \qquad
    \ell = 1,\dots, N_{\mathrm{meas}},
\end{equation}
where \(I_{\ell}=(x_{\ell}^1,x_{\ell}^2)\subset\partial\Omega_3\) denotes the
\(\ell\)-th measurement segment and
\(\boldsymbol{n}=(0,1)^T\) is the outward unit normal vector on the top
boundary. Accordingly, we define the observation operator
\begin{equation} \label{eq:observation_operator}
    \mathcal{O}(u)
    =
    \left\{
    \int_{I_1} \nabla u\cdot\boldsymbol{n}\,ds,
    \ldots,
    \int_{I_{N_{\mathrm{meas}}}} \nabla u\cdot\boldsymbol{n}\,ds
    \right\}
    \in \mathbb{R}^{N_{\mathrm{meas}}}.
\end{equation}

Let $\mathcal{W}$ denote the solution operator mapping a doping profile $c$ to the corresponding potential $u$, by solving \eqref{eq:semi-PDE-strong}. We define the forward operator as
\[
\mathcal{G}(c) := \mathcal{O} \circ \mathcal{W},
\]
which maps the doping function $c$ to the associated boundary measurements $\boldsymbol{y}$ of the normal current. These measurements are typically contaminated by noise, and we therefore assume an additive noise model of the form
\begin{equation} \label{eq:forward-model}
    \boldsymbol{y} = \mathcal{G}(c) + \boldsymbol{\varepsilon},
\end{equation}
where $\boldsymbol{\varepsilon} \in \mathbb{R}^{N_{\text{meas}}}$ represents a Gaussian measurement noise.

In the following section, we discretize \eqref{eq:semi-PDE-strong} using the finite element method (FEM) and define the corresponding discrete forward operator. This discrete setting then serves as the basis for formulating the problem within a statistical framework later in Section \ref{sec:Bayesian_formulation}.

\section{Finite Element Discretization of the Forward Problem} \label{sec:FEM}

In this section, we briefly describe the FEM approximation of the solution operator $\mathcal W$, used to construct the discrete forward operator.

Let $V := H^1(\Omega; \mathbb{R})$ and define the subspace $V_0 = \{v \in V : v = 0 \text{ on } \partial\Omega_2 \cup \partial\Omega_3\}$. Multiplying \eqref{eq:semi-PDE-strong} by $v\in V_0$ and integrating over $\Omega$ yields the weak form
\begin{equation} \label{eq:semi-weak}
    \int_{\Omega} \epsilon \nabla u \cdot \nabla v~d\boldsymbol x + 2\delta^2 \int_\Omega \sinh(u)v~d \boldsymbol x = \int_\Omega cv~d \boldsymbol{x}.
\end{equation}
Here, Green's identity has been used, and the boundary contributions vanish because $(\nabla u \cdot \boldsymbol{n})|_{\partial \Omega_1} = 0$ and $v|_{\partial\Omega_2 \cup \partial\Omega_3} = 0$. 

To impose the Dirichlet boundary conditions, we employ the lifting method \cite{quarteroni2006numerical}. Let $g \in V$ satisfy $g|_{\partial \Omega_2} = f_1$ and $g|_{\partial \Omega_3} = f_2$. We then introduce an auxiliary function \(w\in V_0\)
% which has vanishing trace on the Dirichlet part of the boundary, 
% We then introduce an auxiliary function $w \in V$ with vanishing trace, i.e., $w|_{\partial \Omega} = 0$, 
and define the change of variables $u = w + g$. Substituting this relation into \eqref{eq:semi-weak} gives the lifted weak formulation: find \(w\in V_0\) such that
\begin{equation} \label{eq:semi-weak-lifted}
    \int_{\Omega} \epsilon \nabla w \cdot \nabla v~d\boldsymbol x + 2\delta^2 \int_\Omega \sinh(w + g)v~d \boldsymbol x
    =
    -\int_{\Omega} \epsilon \nabla g \cdot \nabla v~d\boldsymbol x + \int_\Omega cv~d \boldsymbol{x}  \qquad \forall v\in V_0.
\end{equation}
To further discretize this weak form, we consider a triangulation of $\Omega$ and let 
$\{\phi_i\}_{i=1}^{N_{\mathrm{FEM}}}$ denote the first-order Lagrange finite element basis functions,  where $N_{\mathrm{FEM}}$ is the number of finite element degrees of freedom. Also define $\boldsymbol w$, $\boldsymbol{g}$, $\boldsymbol{c}$ to be FEM expansion coefficient vectors for $w$, $g$ and $c$ respectively. Now, a discrete version of \eqref{eq:semi-weak-lifted} takes the form
\begin{equation}
    \boldsymbol{K}\boldsymbol w + F(\boldsymbol{w}) = \boldsymbol{b},
\end{equation}
where $\boldsymbol{K}\in \mathbb R^{N_{\mathrm{FEM}} \times N_{\mathrm{FEM}}}$, $\boldsymbol{b} \in \mathbb R^{N_{\mathrm{FEM}}}$, and nonlinear function $F:\mathbb R^{N_{\mathrm{FEM}}} \to \mathbb R^{N_{\mathrm{FEM}}}$ as
\begin{align}\label{eq:FEM-mat-vec}
    [\boldsymbol{K}]_{m,n} &= \int_\Omega \epsilon \nabla \phi_m \cdot \nabla \phi_n ~d\boldsymbol{x}, \qquad m,n=1,\dots,N_{\mathrm{FEM}},\\
    [\boldsymbol b]_n &= \int_\Omega c~ \phi_n ~d\boldsymbol{x}- \int_\Omega \epsilon \nabla g \cdot \nabla \phi_n ~d\boldsymbol{x} ,  \qquad n=1,\dots,N_{\mathrm{FEM}},
\end{align}
and
\begin{equation}
    [F(\boldsymbol{w})]_i = 2\delta^2 \int_{\Omega} \sinh\left( \sum_{j=1}^{N_{\mathrm{FEM}}} [\boldsymbol{w}]_j \phi_j + g \right) \phi_i~d\boldsymbol{x}.
\end{equation}
Here, with a slight abuse of notation, we are referring to finite element approximation $g^h$ and $c^h$ of $g$ and $c$, respectively, with the same symbols.

To solve the nonlinear discrete problem, we introduce the residual $R(\boldsymbol{w}) := \boldsymbol{K}\boldsymbol{w} + F(\boldsymbol{w}) - \boldsymbol{b}$ and apply Newton's method \cite{quarteroni2006numerical} to approximate a root $\boldsymbol{w}^\star$ satisfying $R(\boldsymbol{w}^\star) \approx 0$. The Newton iterations take the form
\begin{equation} \label{eq:newton}
\begin{aligned}
\boldsymbol{w}^{(i+1)} &= \boldsymbol{w}^{(i)} + \Delta \boldsymbol{w}^{(i)}, \\
J_R(\boldsymbol{w}^{(i)}) \Delta \boldsymbol{w}^{(i)} &= -R(\boldsymbol{w}^{(i)}),
\end{aligned}
\end{equation}
where $\boldsymbol{w}^{(i)} \in \mathbb R^{N_{\text{FEM}}}$ denotes the current Newton iterate and $\Delta \boldsymbol{w}^{(i)}$ is the Newton increment. Here $J_R(\boldsymbol{w})$ denotes the Jacobian of the residual evaluated at $\boldsymbol{w}$, given by
\begin{equation}
\begin{aligned}
J_R(\boldsymbol{w}) &= \boldsymbol{K} + J_F(\boldsymbol{w}), \\
[J_F(\boldsymbol{w})]_{i,j} &= 2\delta^2 \int_{\Omega} \cosh(w + g)\phi_i \phi_j ~ d\boldsymbol{x},
\end{aligned}
\end{equation}
where $w = \sum_j [\boldsymbol{w}]_j \phi_j$ is the finite element function corresponding to the coefficient vector $\boldsymbol{w}$. The Newton iterations are terminated once
% the $L^2$-norm of the residual satisfies 
$\|R(\boldsymbol{w}^{(i)})\|_2 < \delta_{\mathrm{Newton}}$.

We denote by $\mathcal W^h : \boldsymbol{c} \mapsto \boldsymbol{u}$ the discrete solution operator that computes $\boldsymbol{w}$ from the discrete nonlinear system and then recovers $\boldsymbol{u}$ using the lifting relation $\boldsymbol{u} = \boldsymbol{w} + \boldsymbol{g}$.

Given the finite element approximation $u^h$, the boundary measurements are approximated by evaluating the flux of $u^h$ across each electrode. For $I_\ell \subset \partial \Omega_3$, we define
\begin{equation} \label{eq:discrete-current}
    [\boldsymbol{y}^h]_\ell 
    := \int_{I_\ell} \nabla u^h \cdot \boldsymbol{n} \, ds, \qquad
    \ell=1,\dots,N_{\mathrm{meas}}.
\end{equation}
That is, the discrete measurements are obtained by integrating the normal component of the numerical flux over each electrode segment. In practice, the integrals in \eqref{eq:discrete-current} are evaluated using boundary quadrature on the mesh. This defines the discrete observation operator $\mathcal{O}^h$. Finally, we define the discrete forward operator by $\mathcal{G}^h := \mathcal{O}^h \circ \mathcal{W}^h$ and write the discrete inverse problem as
\begin{equation} \label{eq:forward-model-discrete}
    \boldsymbol{y} = \mathcal{G}^h(\boldsymbol{c}) + \boldsymbol{\varepsilon},
\end{equation}

\section{Bayesian Formulation of Semiconductor Imaging}\label{sec:Bayesian_formulation}
In this section, we recast the inverse problem in semiconductor imaging \eqref{eq:forward-model-discrete} from a Bayesian perspective. Within this framework, the unknown doping profile parameters, measurement noise, and observed data are treated as random variables. The solution is characterized by the conditional distribution of the unknown parameters given the observed data, commonly known as the \emph{posterior} distribution \cite{kaipio2005statistical}. By Bayes’ theorem, this posterior is proportional to the product of the likelihood, which describes the data distribution for a fixed set of parameters, and the prior, which represents prior knowledge about the parameters before observing the data.

In the following subsections, we introduce the components of this Bayesian formulation, namely the prior, likelihood, and posterior distributions for the semiconductor imaging problem.

\subsection{Gaussian Priors and Pushforward Measures} \label{sec:gaussian-kl}

In this section, we first recall the notion of Gaussian random fields and then review the push-forward construction used to define probability measures on smooth and nearly piecewise-constant fields.

Let $(\mathbb X, \langle \cdot, \cdot \rangle, \| \cdot \|)$ be a Hilbert function space and $(\mathbb X,\mathcal B(\mathbb X), \mathbb P)$, where $\mathcal B(\mathbb X)$ is the Borel $\sigma$-algebra, be a probability space defined on $\mathbb X$. We say that $X$ is an $\mathbb X$-valued Gaussian random function if for any $\mu \in \mathbb X$, the real-valued random variable $\langle X, \mu \rangle$ is a Gaussian, i.e.,  $\langle X, \mu \rangle \sim \mathcal N(m,\tau^2)$, for some $m\in \mathbb R$ and $\tau\in \mathbb R^+$.

% \hsnalt{Let \((\mathbb X, \langle \cdot, \cdot \rangle, \| \cdot \|)\) be a Hilbert function space and let \((\Omega_{\mathrm{prob}},\mathcal F,\mathbb P)\) be a probability space. We say that \(X:\Omega_{\mathrm{prob}}\to \mathbb X\) is an \(\mathbb X\)-valued Gaussian random function if, for any \(\mu \in \mathbb X\), the real-valued random variable \(\langle X, \mu \rangle\) is Gaussian.}

The following lemma characterizes $X$ in terms of a mean function $m\in \mathbb X$ and a symmetric, trace-class, and non-negative linear operator $\mathcal C:\mathbb X\to \mathbb X$. 
\begin{lemma} \cite{ibragimov2012gaussian}
Let $X$ be an $\mathbb X$-valued Gaussian random function.  Then we can find $m \in \mathbb X$ and a trace-class, symmetric, and non-negative linear operator $\mathcal C:\mathbb X \to \mathbb X$, referred to as the \emph{covariance operator}, such that
    \begin{equation}
        \begin{aligned}
            \langle m , \mu \rangle &= \mathbb E \langle  X , \mu \rangle, \qquad &\forall \mu\in \mathbb X, \\
            \langle \mathcal C \mu , \eta \rangle &= \mathbb E \left[\langle X - m , \mu \rangle \langle X - m , \eta \rangle \right], \qquad &\forall \mu,\eta \in \mathbb X,
        \end{aligned}
    \end{equation}
%where $\mathbb E$ denotes expectation,
%\[
%    \mathbb E\big[f(X)\big] := \int_{\mathbb X} f(\mu)  \ d\mathbb P(\mu).
%\]
%We then write $\mathcal N(m, \mathcal C) := \mathbb P \circ X^{-1}$ and $X \sim \mathcal N(m, \mathcal C)$, when referring to a Gaussian random function $X$ defined on the probability space $(\mathbb X, \mathcal B (\mathbb X), N(m, \mathcal C))$.
where \(\mathbb E\) denotes expectation with respect to the probability measure \(\mathbb P\), i.e.,
\[
    \mathbb E[f(X)] := \int_{\mathbb X} f(x)\,d\mathbb P(x).
\]
We write \(X\sim \mathcal N(m,\mathcal C)\) to denote a Gaussian random function with mean \(m\) and covariance operator \(\mathcal C\).
\end{lemma}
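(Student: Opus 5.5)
The plan is to build $m$ and $\mathcal C$ from the first and second moments of $X$, using the Riesz representation theorem. Trace-class is the only step that needs real integrability input.

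\emph{Finite second moment.} I would first show that $\mathbb E\|X\|^2<\infty$. This is the main obstacle. The definition only says that each one-dimensional projection $\langle X,\mu\rangle$ is Gaussian, which by itself does not give square integrability of $\|X\|$. I would invoke Fernique's theorem: for a centered Gaussian measure on a separable Banach (here Hilbert) space, $\mathbb E\exp(\alpha\|X\|^2)<\infty$ for some $\alpha>0$. The same conclusion holds for a non-centered measure, since after fixing $m$ one can apply the theorem to $X-m$, or to the symmetrization $X-X'$ with $X'$ an independent copy. A more elementary fallback goes as follows. Take an orthonormal basis $\{e_k\}$. By separability and the Borel structure, $\|X\|^2=\sum_k\langle X,e_k\rangle^2$. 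For each $N$ the vector $(\langle X,e_k\rangle)_{k\le N}$ is jointly Gaussian, because every linear combination is a projection $\langle X,\mu\rangle$. Then a Gaussian tail and concentration argument bounds $\mathbb E\sum_{k\le N}\langle X,e_k\rangle^2$ uniformly in $N$ in terms of the median of $\|X\|$, which is finite because $\|X\|<\infty$ almost surely. Monotone convergence then gives the bound for the full sum.

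\emph{Mean.} The map $\mu\mapsto \mathbb E\langle X,\mu\rangle$ is linear, since each projection is Gaussian and hence integrable and expectation is linear. It is also bounded: $|\mathbb E\langle X,\mu\rangle|\le (\mathbb E\|X\|^2)^{1/2}\|\mu\|$. The Riesz representation theorem then gives a unique $m\in\mathbb X$ with $\langle m,\mu\rangle=\mathbb E\langle X,\mu\rangle$.

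\emph{Covariance.} Define the bilinear form $B(\mu,\eta)=\mathbb E[\langle X-m,\mu\rangle\langle X-m,\eta\rangle]$. By Cauchy--Schwarz, $|B(\mu,\eta)|\le \mathbb E\|X-m\|^2\,\|\mu\|\|\eta\|$, so $B$ is bounded. By Riesz again there is a unique bounded operator $\mathcal C$ with $\langle\mathcal C\mu,\eta\rangle=B(\mu,\eta)$. Symmetry follows from $B(\mu,\eta)=B(\eta,\mu)$, and non-negativity from $B(\mu,\mu)=\mathbb E\langle X-m,\mu\rangle^2\ge0$. For trace-class, since $\mathcal C$ is non-negative it suffices to compute
\[
\sum_k\langle\mathcal C e_k,e_k\rangle=\sum_k\mathbb E\langle X-m,e_k\rangle^2=\mathbb E\|X-m\|^2<\infty.
\]
Here Tonelli justifies exchanging sum and expectation, and the final bound comes from the first step.

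Finally, the variance of $\langle X,\mu\rangle$ is $\tau^2=\langle\mathcal C\mu,\mu\rangle$ and its mean is $\langle m,\mu\rangle$. So each projection is $\mathcal N(\langle m,\mu\rangle,\langle\mathcal C\mu,\mu\rangle)$, which justifies the notation $X\sim\mathcal N(m,\mathcal C)$.
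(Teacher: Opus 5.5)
The paper gives no proof of this lemma; it is quoted from \cite{ibragimov2012gaussian}. Your argument is the standard proof and is correct: square-integrability of $\|X\|$, then Riesz representation for $m$ and $\mathcal C$, then $\sum_k\langle\mathcal C e_k,e_k\rangle=\mathbb E\|X-m\|^2$ by Tonelli. There is one ordering issue. Applying Fernique's theorem ``to $X-m$'' is circular as written, because you construct $m$ only afterwards, from $\mathbb E\|X\|^2<\infty$. The symmetrization you mention fixes this, but you should spell out the step back to $X$. Since $X-X'$ is a centered Gaussian element, Fernique gives $\mathbb E\|X-X'\|^2<\infty$. By Tonelli this quantity equals $\int_{\mathbb X}\mathbb E\|X-x'\|^2\,d\mathbb P(x')$, so $\mathbb E\|X-x'\|^2<\infty$ for some $x'$, and hence $\mathbb E\|X\|^2\le 2\mathbb E\|X-x'\|^2+2\|x'\|^2<\infty$. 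Alternatively, you can obtain $m$ before any moment bound. The linear map $\mu\mapsto\langle X,\mu\rangle$ from $\mathbb X$ into $L^2(\mathbb P)$ has closed graph (pointwise convergence plus an a.s.\ convergent subsequence), so it is bounded, and Riesz gives $m$. After that, applying Fernique to the centered element $X-m$ is legitimate. This also shows that boundedness, symmetry and non-negativity of $\mathcal C$ are soft facts; the integrability input is needed only for the trace-class property, which is in fact equivalent to $\mathbb E\|X-m\|^2<\infty$. The same symmetrization is the easiest way to run your finite-dimensional median fallback in the non-centered case. Finally, you tacitly assume $\mathbb X$ is separable, both for the countable orthonormal basis in Parseval and the trace computation and for Fernique. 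This is harmless for the function spaces used in the paper, but it is not stated in the lemma.
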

The following lemma recalls the \emph{Karhunen--Loève} (KL) expansion, which enables us to express $X$ in terms of the spectral decomposition of the covariance operator $\mathcal C$.

\begin{lemma} \cite{ibragimov2012gaussian} \label{thm:kl}
    Let $m$ and $\mathcal C$ be the mean and covariance operator defined above. Furthermore, let $\{e_j\}_{j=1}^{\infty}$ be the eigenfunctions of \(\mathcal C\), with corresponding eigenvalues \(\{\lambda_j\}_{j=1}^{\infty}\), sorted in decreasing order. Then, $X\sim \mathcal N(m,\mathcal C)$ if and only if $X$ has the infinite expansion
    \begin{equation} \label{eq:kl}
        X= m + \sum_{j=1}^{\infty} \sqrt{\lambda_j} Z_j e_j,
    \end{equation}
    where $Z_j\sim \mathcal N(0,1)$, $j \geq 1$, are independent standard normal real-valued random variables. We interpret the infinite summation as $\mathbb E \| X - m \|^2 < \infty$.% \hsnalt{The infinite series is understood to converge in mean square.}
\end{lemma}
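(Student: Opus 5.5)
The plan is to prove both directions of the KL characterization in Lemma~\ref{thm:kl}. The main tool is the spectral theorem for compact self-adjoint operators applied to $\mathcal C$, together with the moment characterization of Gaussian random functions in the preceding lemma.

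\textbf{Spectral setup.} Since $\mathcal C$ is trace-class, symmetric and non-negative, it is compact and self-adjoint. Hence there is an orthonormal system $\{e_j\}$ of eigenfunctions with eigenvalues $\lambda_j\ge 0$, $\sum_j\lambda_j<\infty$. We complete $\{e_j\}$ by an orthonormal basis of $\ker\mathcal C$ if necessary and treat those directions as having $\lambda_j=0$.

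\textbf{Forward direction.} Assume $X\sim\mathcal N(m,\mathcal C)$ and define $\xi_j:=\langle X-m,e_j\rangle$.
\begin{itemize}
\item Any finite linear combination $\sum_j a_j\xi_j=\langle X-m,\sum_j a_je_j\rangle$ is Gaussian with mean zero. So $(\xi_1,\dots,\xi_n)$ is jointly Gaussian for every $n$.
\item By the covariance identity, $\mathbb E[\xi_i\xi_j]=\langle\mathcal C e_i,e_j\rangle=\lambda_i\delta_{ij}$. Jointly Gaussian and uncorrelated implies independent, so the $\xi_j$ are independent $\mathcal N(0,\lambda_j)$.
\item For $\lambda_j>0$ set $Z_j:=\xi_j/\sqrt{\lambda_j}$. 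For $\lambda_j=0$ we have $\xi_j=0$ almost surely, and we let $Z_j$ be auxiliary independent standard normals, enlarging the probability space if needed.
\item Parseval gives $X-m=\sum_j\xi_je_j$ pathwise in $\mathbb X$.
\item Monotone convergence gives $\mathbb E\|X-m\|^2=\sum_j\lambda_j=\operatorname{tr}\mathcal C<\infty$. Hence the expansion holds with mean-square convergence of the partial sums, since $\mathbb E\|X-m-S_n\|^2=\sum_{j>n}\lambda_j\to0$.
\end{itemize}

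\textbf{Converse direction.} Given independent $Z_j\sim\mathcal N(0,1)$, let $S_n:=\sum_{j\le n}\sqrt{\lambda_j}Z_je_j$.
\begin{itemize}
\item Orthogonality gives $\mathbb E\|S_n-S_k\|^2=\sum_{j=k+1}^n\lambda_j$. So $(S_n)$ is Cauchy in $L^2(\mathbb P;\mathbb X)$ and converges to some $S$; this makes $X=m+S$ well defined.
\item For fixed $\mu\in\mathbb X$, the scalar $\langle S_n,\mu\rangle=\sum_{j\le n}\sqrt{\lambda_j}\langle e_j,\mu\rangle Z_j$ is Gaussian with variance $\sum_{j\le n}\lambda_j\langle e_j,\mu\rangle^2$.
\item $\langle S_n,\mu\rangle\to\langle S,\mu\rangle$ in $L^2(\mathbb P)$. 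An $L^2$ limit of centered Gaussians is Gaussian: pass to the limit in the characteristic functions $\exp(-t^2\sigma_n^2/2)$.
\item Therefore $\langle X,\mu\rangle\sim\mathcal N\bigl(\langle m,\mu\rangle,\sum_j\lambda_j\langle e_j,\mu\rangle^2\bigr)$, and $X$ is Gaussian.
\item The same $L^2$ limit applied to products gives $\mathbb E[\langle X-m,\mu\rangle\langle X-m,\eta\rangle]=\sum_j\lambda_j\langle e_j,\mu\rangle\langle e_j,\eta\rangle=\langle\mathcal C\mu,\eta\rangle$. So $X\sim\mathcal N(m,\mathcal C)$.
\end{itemize}

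\textbf{Expected obstacle.} The main difficulty is measure-theoretic bookkeeping rather than estimates. It lies in three places:
\begin{itemize}
\item justifying that $X-m$ is a genuine $\mathbb X$-valued random variable with $\mathbb E\|X-m\|^2<\infty$, so that the interchange of sum and expectation via monotone convergence and Parseval is legitimate;
\item handling the kernel of $\mathcal C$ when constructing the $Z_j$;
\item interpreting the ``if and only if'' as equality in law rather than pathwise identity.
\end{itemize}
A Gaussian measure on a separable Hilbert space is determined by its mean and covariance, for example via its characteristic functional $\exp\bigl(i\langle m,\mu\rangle-\tfrac12\langle\mathcal C\mu,\mu\rangle\bigr)$. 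This fact closes the converse.
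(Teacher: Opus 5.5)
Your argument is correct. The paper itself gives no proof of this lemma: it quotes the result from Ibragimov--Rozanov. So the comparison is between a citation and your self-contained version of the standard argument behind it.

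Citing buys brevity. Your version shows that once the preceding lemma supplies a trace-class $\mathcal C$ (the one genuinely deep input, which you rightly take as given), the rest is elementary. The trace-class property enters in exactly two places: the identity $\mathbb E\|X-m\|^2=\sum_j\lambda_j<\infty$, and the Cauchy property of the partial sums in $L^2(\mathbb P;\mathbb X)$.

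Your write-up also makes the ``if and only if'' precise:
\begin{itemize}
\item The forward direction gives an almost-sure identity, with the $Z_j$ built from $X$ itself on a possibly enlarged space. This is stronger than equality in law.
\item The converse identifies the mean and covariance. Under the paper's definition, that is all $X\sim\mathcal N(m,\mathcal C)$ means. The uniqueness of Gaussian measures via the characteristic functional, which you invoke at the end, is only needed to read $\mathcal N(m,\mathcal C)$ as a single law.
\end{itemize}

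Two small bookkeeping points:
\begin{itemize}
\item The first obstacle you list is already settled by your own forward step. Parseval gives $\|X-m\|^2=\sum_j\xi_j^2$ pointwise, and monotone convergence needs no prior integrability.
\item Concluding that all kernel coordinates vanish simultaneously almost surely uses a countable basis of $\ker\mathcal C$, i.e.\ separability of $\mathbb X$. This is harmless for the $L^2$-type spaces used here. Since those terms contribute nothing to the series, they need not be merged into the decreasing ordering of the $\lambda_j$.
\end{itemize}
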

This lemma provides a practical recipe for constructing 
% \textcolor{brown}{(Do you mean "sampling from"?)} 
Gaussian random functions, or fields. First, one chooses a covariance operator $\mathcal{C}$ and computes its eigendecomposition. The expansion in \eqref{eq:kl} is then truncated after \(N_{\mathrm{KL}}\) terms to obtain an approximate random function \(X_N\). The truncation level is chosen so that a desired proportion of the total variance \(\mathbb E\|X-m\|^2\) is retained. For a realization \(\boldsymbol z=(z_1,\ldots,z_{N_{\mathrm{KL}}})\in\mathbb R^{N_{\mathrm{KL}}}\) of the truncated KL coefficient vector, we define the KL reconstruction map
\begin{equation}\label{eq:KL-reconstruction-map}
\mathcal R_N(\boldsymbol z)
:=
m+
\sum_{j=1}^{N_{\mathrm{KL}}}
\sqrt{\lambda_j}\, z_j e_j .
\end{equation}
% Then assemble the expansion in \eqref{eq:kl} and truncate the series after \(N_{\mathrm{KL}}\) terms to obtain the approximate random function $X_N$. The truncation level is chosen so that a desired proportion of the total variance \(\mathbb E\|X-m\|^2\) is retained. For a realization
% \(\boldsymbol z=(z_1,\ldots,z_{N_{\mathrm{KL}}})\in\mathbb R^{N_{\mathrm{KL}}}\)
% of the truncated KL coefficient vector, we define the KL reconstruction map
% \[
% \mathcal R_N(\boldsymbol z)
% :=
% m+
% \sum_{j=1}^{N_{\mathrm{KL}}}
% \sqrt{\lambda_j}\, z_j e_j .
% \]
% We write this truncated field as
% \begin{equation}\label{eq:truncated_KL}
%    X_N(\boldsymbol z)
%    =
%    m+
%    \sum_{j=1}^{N_{\mathrm{KL}}}
%    \sqrt{\lambda_j}\, Z_j e_j,
% \end{equation}
% where \(\boldsymbol z=(z_1,\ldots,z_{N_{\mathrm{KL}}})\) denotes a realization of the KL coefficient vector \(\boldsymbol Z=(Z_1,\ldots,Z_{N_{\mathrm{KL}}})\).
A useful class of two-dimensional covariance operators satisfying the conditions of Lemma~\ref{thm:kl} is given by \cite{dunlop2017hierarchical}
\begin{equation} \label{eq:matern-cov}
\mathcal{C} = (\tau I - \Delta)^{-\alpha},
\end{equation}
for $\tau > 0$ and $\alpha > 1.5$. These operators are motivated by the Whittle--Mat\'ern covariance kernels \cite{whittle1954}, which allow control over both the correlation length through $\tau$ and the regularity of sample functions through $\alpha$. It is known that if $X \sim \mathcal{N}(0,\mathcal{C})$, with $\mathcal{C}$ as in \eqref{eq:matern-cov}, then $X \in W^{s,2}$ for all $s < \alpha - 1$, almost surely \cite{dunlop2017hierarchical}.

To obtain an eigendecomposition of \(\mathcal C\), we first compute the eigenpairs \(\{e_i,\gamma_i\}_{i=1}^{N_{\mathrm{KL}}}\) of \((\tau I - \Delta)\) using FEM. Then we define
\begin{equation} \label{eq:eig_inverse_relation}
    \lambda_i := \gamma_i^{-\alpha}, \qquad i=1,\dots,N_{\mathrm{KL}},
\end{equation}
to assemble the truncated KL expansion in \eqref{eq:kl}.

Informally, one may writes the Gaussian measure in a density-like form as
\[
    d\mathcal N(0,\mathcal C)
    \propto
    \exp\left(
        -\frac{1}{2}
        \langle \boldsymbol{x}, \mathcal C^{-1}\boldsymbol{x} \rangle
    \right)d\boldsymbol{x}.
\]
% \textcolor{brown}{What does $d$ denote? It looks weird to me. Can we include a reference for this?}\bbkc{It doesn't mean anything. The correct way is to use Radon-Nikodym derivative. }
This expression is only symbolic in infinite-dimensional settings, since there is no infinite-dimensional Lebesgue measure with respect to which $\mathcal N(0,\mathcal C)$ admits a density. Nevertheless, this notation is convenient and correctly reflects the quadratic structure of the Gaussian measure.  In a finite-dimensional truncated KL representation, the corresponding negative log-prior for the KL coefficient vector \(\boldsymbol z\) is $
    \Psi_Z(\boldsymbol z)
    :=
    \frac12
    \|\boldsymbol z\|_2^2 $. 
% as $\Psi_X(\boldsymbol{x}) := 1/2 \langle \boldsymbol{x}, \mathcal C^{-1} \boldsymbol{x} \rangle $.

In the next subsection, we combine Gaussian random fields with nonlinear mappings to define probability measures of nearly piecewise constant functions. These constructions are inspired by \cite{dunlop2017hierarchical}.

% \textcolor{brown}{I would suggest to add a pseudocode/algorithm for the prior construction as well as for the gradient-based-inference (section 5). I would even add a comprehensive algorithm for Adjoint-Based Bayesian for source inversion (for semiconductors). This is of course just a suggestion and you can leave it like as it is.}

\subsection{Piecewise-Constant Random Fields and Their Smooth Approximation} \label{sec:prior-piecewise-constant}
In many semiconductor applications, the doping profile $c$ exhibits a nearly piecewise constant structure, consisting of regions with approximately uniform dopant concentrations separated by narrow transition layers \cite{burger2001identification,markowich1985stationary,taghizadeh2025bayesian}. To accommodate such models, we have previously defined two plateau values $c_{\mathrm{p}},~c_{\mathrm{n}}\in \mathbb R$. In this section, we use the Heaviside function, defined by $H(t)=\boldsymbol{1}_{[0,\infty)}(t)$ 
% $H(\boldsymbol x) = \boldsymbol{1}_{[0,\infty)}(\boldsymbol x)$
, where $\boldsymbol{1}$ denotes the characteristic function, to construct a bilevel piecewise constant field:
\begin{equation} \label{eq:heaviside}
    C = F_1(X;c_{\mathrm p},c_{\mathrm n}) := c_{\text{p}} + (c_{\text{n}} - c_{\text{p}}) H(X), \qquad X \sim \mathcal{N}(0,\mathcal{C}),
\end{equation}
where $C$ denotes the random field corresponding to the doping profile $c$, with an associated probability space specified below, and $X$ is a Gaussian random field as defined in Section~\ref{sec:gaussian-kl}.

Although this construction is a useful modeling tool, the Heaviside map is non-differentiable. To address this, we approximate $F_1$ by a function with continuous spatial transitions that remains close to a piecewise constant field. This can be achieved using a sigmoid transformation:
\begin{equation} \label{eq:piecewise-constant}
    C = F_2(X;c_{\mathrm p},c_{\mathrm n}) := c_{\text{p}} + (c_{\text{n}} - c_{\text{p}}) \frac{1}{1 + \exp(-k X)}, \qquad X \sim \mathcal N(0,\mathcal C),
\end{equation}
where $k > 0$ controls the sharpness of the transition. As $k \to \infty$, the transition becomes increasingly sharp and $F_2$ approaches $F_1$.

For fixed plateau values \(c_{\mathrm p}\) and \(c_{\mathrm n}\), we define a probability measure on the target doping concentration space $\mathbb S$, consisting of piecewise-constant or nearly piecewise-constant fields, via the push-forward of $\mathbb{P}$ under $F_{\ell}$, that is, $\mathcal P_\ell :=\mathbb{P} \circ F_{\ell}^{-1}$ for $\ell = 1,2$. This construction defines the probability space $(\mathbb S, \mathcal B(\mathbb S), \mathcal P_\ell)$ for the random doping field, and we write $C \sim \mathcal P_\ell$. Note that the inverse map appears only as a measure-theoretic device for defining probability measures on the desired function spaces; it is not evaluated explicitly.

\subsection{Priors for Bilevel Parameters and the Joint Measure}
\label{sec:joint_prior}
While the prior measure introduced in the previous subsection provides a flexible framework for modeling the geometry of the interface in the doping profile, it is also of interest to infer the doping levels themselves \cite{cheng2011recovering,Ali2006Inverse,burger2004inverse}. To this end, we assign a probabilistic structure to the constants $c_{\text{p}}$ and $c_{\text{n}}$ by introducing real-valued random variables $C_{\text{p}}$ and $C_{\text{n}}$, defined on the probability spaces $(\mathbb R, \mathcal B(\mathbb R), \mu_{\text{p}})$ and $(\mathbb R, \mathcal B(\mathbb R), \mu_{\text{n}})$, respectively. The measures $\mu_{\text{p}}$ and $\mu_{\text{n}}$ encode prior distributions over the admissible values of $c_{\text{p}}$ and $c_{\text{n}}$, for instance, the Lebesgue measure \cite{Rudin1987} on $\mathbb R$ or an interval.% \hsnalt{for instance uniform distributions on bounded intervals or Gaussian distributions on $\mathbb R$.}

We assume that $X$, $C_{\text{p}}$, and $C_{\text{n}}$ are mutually independent. 
% Let us define the joint random variable $\Theta := (X, C_{\text{p}}, C_{\text{n}})$ which allows us to define a joint prior measure on the product space $\mathbb X \times \mathbb R \times \mathbb R$ as the product measure
Let \(\mu_X:=\mathcal N(0,\mathcal C)\) denote the law of the Gaussian random field \(X\). We define the joint random variable $\Theta := (X, C_{\text{p}}, C_{\text{n}})$  which takes values in the product space \(\mathbb X \times \mathbb R \times \mathbb R\). Under the independence assumption, the joint prior measure is the product measure
\begin{equation}
\Pi := \mu_X \otimes \mu_{\text{p}} \otimes \mu_{\text{n}}.
\end{equation}
% The corresponding random element $(X, C_{\text{p}}, C_{\text{n}})$ then takes values in $\mathbb X \times \mathbb R \times \mathbb R$ with law $\Pi$. 

Using this joint prior, the mappings $F_1$ and $F_2$ can be extended to depend on $(X, C_{\text{p}}, C_{\text{n}})$, yielding random fields of the form
\begin{equation} \label{eq:joint-dope}
C = F_{\ell}(X, C_{\text{p}}, C_{\text{n}}), \qquad \ell = 1,2.
\end{equation}
Consequently, the prior measure on the doping space $\mathbb S$ is given by the pushforward
\begin{equation} \label{eq:final-measure}
\mathcal P_\ell := \Pi \circ F_{\ell}^{-1}, \qquad \ell = 1,2,
\end{equation}
which defines a probability measure on $(\mathbb S, \mathcal B(\mathbb S))$ incorporating uncertainty in both the geometry encoded by \(X\) and the plateau values \(C_{\mathrm p}\) and \(C_{\mathrm n}\). In the following lemma, we show exponential integrability of the proposed joint measure.

\begin{lemma} \label{thm:exp-int}
Suppose that $\Omega\subset\mathbb R^2$ is bounded and that the prior measures
$\mu_{\mathrm p}$ and $\mu_{\mathrm n}$ are supported on a bounded interval
$[-M,M]$ for some $M>0$. Let $C$ be the doping profile defined in \eqref{eq:joint-dope} where $(X,C_{\mathrm p},C_{\mathrm n})\sim\Pi$. Then, for every $\alpha>0$,
\[
\int_{\mathbb X\times\mathbb R\times\mathbb R}\exp\!\left(\alpha\|C\|_{L^2(\Omega)}\right)\,d\Pi<\infty.
\]
\end{lemma}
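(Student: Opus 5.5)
The plan is to show that $C$ is uniformly bounded in $L^\infty(\Omega)$, and hence in $L^2(\Omega)$, for $\Pi$-almost every $(X,C_{\mathrm p},C_{\mathrm n})$. Once we have that, the integrand is bounded by a constant and the integral against the probability measure $\Pi$ is finite. No Gaussian tail estimate (such as Fernique's theorem) is needed, because both $F_1$ and $F_2$ take values in a convex combination of the plateau values.

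First, we fix $\ell\in\{1,2\}$ and note that $H(X(\boldsymbol x))\in\{0,1\}$ and $\sigma(t):=1/(1+e^{-kt})\in(0,1)$ for every real argument. Therefore, pointwise in $\boldsymbol x$,
\[
C(\boldsymbol x)=(1-s(\boldsymbol x))\,C_{\mathrm p}+s(\boldsymbol x)\,C_{\mathrm n},\qquad s(\boldsymbol x)\in[0,1],
\]
where $s=H(X)$ or $s=\sigma(X)$. Consequently $|C(\boldsymbol x)|\le \max(|C_{\mathrm p}|,|C_{\mathrm n}|)$. Since $\mu_{\mathrm p}$ and $\mu_{\mathrm n}$ are supported in $[-M,M]$, the product structure of $\Pi$ gives $|C(\boldsymbol x)|\le M$ for a.e.\ $\boldsymbol x$, $\Pi$-almost surely.

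Second, we integrate over the bounded domain to get $\|C\|_{L^2(\Omega)}\le M|\Omega|^{1/2}$ $\Pi$-a.s. Hence $\exp(\alpha\|C\|_{L^2(\Omega)})\le \exp(\alpha M|\Omega|^{1/2})$ $\Pi$-a.s. Because $\Pi$ is a probability measure, the integral is at most $\exp(\alpha M|\Omega|^{1/2})<\infty$ for every $\alpha>0$.

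The only point needing care is measurability: we must check that $C$ is a well-defined measurable element of $L^2(\Omega)$, so that the integral makes sense. For $F_2$, the composition $\sigma\circ X$ of a continuous bounded function with an $\mathbb X\subset L^2$-valued random field is measurable. For $F_1$, one uses that $H$ is Borel, so $H(X)$ is a measurable $\{0,1\}$-valued function defined for a.e.\ $\boldsymbol x$. I would also remark that if $\mu_{\mathrm p},\mu_{\mathrm n}$ were instead taken to be Lebesgue measure (as the paper allows informally), the statement would fail; the bounded-support hypothesis is exactly what makes the argument work.
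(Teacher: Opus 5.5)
Your proposal is correct and follows essentially the same argument as the paper: the sigmoid (or Heaviside) factor lies in $[0,1]$, so $C$ is pointwise a convex combination of the plateau values, giving $\|C\|_{L^2(\Omega)}\le M|\Omega|^{1/2}$ and a constant bound on the integrand. You go slightly beyond the paper in treating $F_1$ and measurability explicitly, since the paper's proof only addresses the logistic map and takes measurability for granted.
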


\begin{proof}
Since the logistic function takes values in \((0,1)\), we have, for almost every \(\boldsymbol x\in\Omega\),
\[
C(\boldsymbol x)
\in
\big[
\min\{C_{\mathrm p},C_{\mathrm n}\},
\max\{C_{\mathrm p},C_{\mathrm n}\}
\big].
\]. Hence, $|C(\boldsymbol x)|\le M,$
which implies $\|C\|_{L^2(\Omega)}\le M|\Omega|^{1/2}$.
Therefore,
\[
\int
\exp\!\left(\alpha\|C\|_{L^2(\Omega)}\right)
\,d\Pi
\le
\exp\!\left(\alpha M|\Omega|^{1/2}\right)
<\infty.
\]
\end{proof}

After the finite-dimensional KL truncation, we denote by
\(\boldsymbol\theta=(\boldsymbol z,c_{\mathrm p},c_{\mathrm n})\)
the corresponding parameter vector. Here,
\(\boldsymbol z\in\mathbb R^{N_{\mathrm{KL}}}\) is a realization of the truncated KL coefficient vector, and the field associated with this realization is obtained through the KL reconstruction map \(\mathcal R_N\) defined in \eqref{eq:KL-reconstruction-map}. The parameters \(c_{\mathrm p}\) and \(c_{\mathrm n}\) denote the plateau doping levels. 
% The associated negative log-prior, up to an additive constant, is written as
% After the finite-dimensional KL truncation, we denote by \(\boldsymbol\theta=(\boldsymbol z,c_{\mathrm p},c_{\mathrm n})\) the corresponding parameter vector, where \(\boldsymbol z\) contains the KL coefficient vector used to construct the truncated Gaussian field \(X_N(\boldsymbol z)\) \bbkc{This is wrong. I commented the formula you had above but now I see you used it here. You can write $X(Z)$ or $\boldsymbol{x(\boldsymbol{z})}$ but you can't write $X(z)$. In General it is not a good idea to write X as a function of Z in infinite dimensions, because the infinite vector $Z$ is not well-defined. That is why even when we truncated the expansion we don't usually write it as a dependency. It makes statisticians uncomfortable :-)} in \eqref{eq:truncated_KL}, and \(c_{\mathrm p}\) and \(c_{\mathrm n}\) denote the plateau doping levels. 
The associated negative log-prior, up to an additive constant, is written as
\begin{equation}\label{eq:NLprior}
    \Psi_\Theta(\boldsymbol\theta)
    =
    \Psi_Z(\boldsymbol z)
    +
    \Psi_{C_{\mathrm p}}(c_{\mathrm p})
    +
    \Psi_{C_{\mathrm n}}(c_{\mathrm n}),
\end{equation}
where \(\Psi_Z\) denotes the negative log-density of the Gaussian prior on the truncated KL coefficients.

For notational simplicity, we also write the finite-dimensional representation of the transformation as
\begin{equation}\label{eq:pf_map_finite}
     F_\ell(\boldsymbol\theta) := F_\ell\big(\mathcal R_N(\boldsymbol z),c_{\mathrm p},c_{\mathrm n}\big), \qquad \ell=1,2.
\end{equation}
% \bbkc{The same comment here: you cannot write $X_N(\boldsymbol z)$. One solution is to write a mappting $P:Z\mapsto X$ and then writing $P(\boldsymbol{z})$.
This finite-dimensional parameterization also clarifies how the plateau values enter the forward problem. Since the Dirichlet boundary values in \eqref{eq:semi-PDE-strong} depend on \(c_{\mathrm p}\) and \(c_{\mathrm n}\), the joint prior on \((X,C_{\mathrm p},C_{\mathrm n})\) induces variability not only in the interior doping field but also in the boundary data. Consequently, the parameter vector \(\boldsymbol\theta\) enters the forward map through two distinct channels: the source term \(c=F_\ell(\boldsymbol\theta)\) and the lifting function associated with the parameter-dependent Dirichlet boundary conditions.

\subsection{Likelihood} \label{sec:likelihood-posterior}

In this subsection, we formulate the likelihood function for the semiconductor inverse problem. This likelihood will later be combined with the prior distribution via Bayes' theorem to obtain the posterior distribution.

Recall that the random doping field is represented by $F_\ell(\Theta)$ , $\ell = 1,2$, with prior measure $\mathcal P_\ell= \Pi \circ F^{-1}_\ell$(cf. Section \ref{sec:joint_prior}). We now reformulate the deterministic inverse problem \eqref{eq:forward-model-discrete} in a statistical setting. Let $Y$ denote the random variable corresponding to the observed data $\boldsymbol y$, and let $E$ denote the random variable describing the observational noise $\boldsymbol \varepsilon$. The probabilistic version of the discrete semiconductor inverse problem \eqref{eq:forward-model-discrete} is then
\begin{equation} \label{eq:forward-probabilistic}
    Y = \mathcal G^h\big(F_{\ell}(\Theta) \big) + E, \qquad \ell = 1,2.
\end{equation}
A standard approach to defining the likelihood distribution, i.e., the distribution of the conditional random variable $Y|(F_\ell(\Theta) = c)$, is to observe that $Y - \mathcal G^h(c) \sim E$. Therefore, the likelihood distribution is obtained by shifting the distribution of $E$ by $\mathcal G^h(c)$. 

% After the finite truncation of the KL expansion, we use the parameter vector \(\boldsymbol{\theta}\) introduced in Section~\ref{sec:joint_prior} and the notation \(F_\ell(\boldsymbol\theta)\) introduced in \eqref{eq:pf_map_finite}. \bbkc{Why do we need the truncated system? Everything is well-defined in infinite dimensions. My suggestion is to remove the previous sentence.} 
Assuming the noise is a multivariate Gaussian random variable with a covariance $\sigma^2_{\text{noise}}I_{N_{\text{meas}}}$, the likelihood function considered here, up to a constant of proportionality, is
\begin{equation} \label{eq:likelihood}
    L(\boldsymbol{y};\boldsymbol{\theta} ) \propto \exp\left( -  \frac{1}{2\sigma^2_{\text{noise}}}\| \mathcal G^h\big(F_\ell(\boldsymbol{\theta})\big) - \boldsymbol{y} \|_{2}^2 \right).
\end{equation}
The parameter \(\sigma_{\mathrm{noise}}\) denotes the noise standard deviation.

The corresponding negative log-likelihood, up to an additive constant, is defined as
\begin{equation} \label{eq:negative-log-likelihood}
    \Phi(\boldsymbol y;\boldsymbol\theta)
    :=
    \frac{1}{2\sigma^2_{\mathrm{noise}}}
    \left\|
    \mathcal G^h\big(F_\ell(\boldsymbol\theta)\big)
    -
    \boldsymbol y
    \right\|_2^2 .
\end{equation}

\subsection{Posterior Distribution}
We now apply Bayes' theorem to define the posterior distribution of the doping profile, namely the conditional distribution of \(C\) given the measurements \(\boldsymbol y\). At the measure level, for each \(\ell=1,2\), the posterior is defined on the space \((\mathbb S,\mathcal B(\mathbb S))\) through the prior measure \(\mathcal P_\ell\) introduced in \eqref{eq:final-measure}. The next result gives the corresponding posterior measure formulation for the doping field and states its stability with respect to perturbations in the observed data.

\begin{theorem}
Let $(\mathbb S, \mathcal B(\mathbb S), \mathcal P_\ell )$, with $\ell=1,2$, be the probability space defined above associated with the prior distribution $\mathcal P_\ell$ for random doping profile $C$, introduced in \eqref{eq:final-measure}. Suppose that $\Phi$ is the negative log-likelihood defined in \eqref{eq:negative-log-likelihood}. Then the posterior distribution $\mathcal P_\ell^{\text{post}}$, defined as the conditional probability measure of the doping field given the measurements, is absolutely continuous with respect to the prior measure $\mathcal P_\ell$, i.e., $\mathcal P_\ell^{\text{post}} \ll \mathcal P_\ell$, and is expressed as the Radon-Nikodym derivative
\begin{equation} \label{eq:Bayes}
    \frac{d \mathcal P_\ell^{\text{post}} }{d \mathcal P_\ell} (s) = \frac{1}{Z(\boldsymbol y)} \exp\big(-\Phi(\boldsymbol{y}; s )\big),
\end{equation}
with normalization constant
\begin{equation}
    Z(\boldsymbol y) = \int_{\mathbb S}  \exp(-\Phi(\boldsymbol{y}; s ) )  \mathcal P_\ell(ds ).
\end{equation}
Furthermore, for any two data vectors $\boldsymbol y_1 $ and $\boldsymbol y_2$ satisfying $\max \{ \| \boldsymbol y_1 \|_2 , \| \boldsymbol y_2 \|_{2} \}<r$ for some \(r>0\), there is $\kappa>0$ independent of \(\boldsymbol y_1\) and \(\boldsymbol y_2\), such that
% \begin{equation}
%     d_{\text{Hell}}( \mathcal P^{\text{post}}_{C|(Y=\boldsymbol{y}_1)},\mathcal P^{\text{post}}_{C|(Y=\boldsymbol{y}_2 )} ) \leq \kappa \| \boldsymbol{y}_1 -\boldsymbol{y}_2 \|_2.
% \end{equation}
\begin{equation}
    d_{\mathrm{Hell}}\left(
    \mathcal P_{\ell,\boldsymbol y_1}^{\mathrm{post}},
    \mathcal P_{\ell,\boldsymbol y_2}^{\mathrm{post}}
    \right)
    \leq
    \kappa
    \| \boldsymbol y_1-\boldsymbol y_2 \|_2 .
\end{equation}
where $d_{\text{Hell}}(\cdot,\cdot)$ denotes the Hellinger distance between probability measures \cite{le2000asymptotics}.
\end{theorem}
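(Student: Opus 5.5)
Our plan is to follow the standard well-posedness framework of Stuart for Bayesian inverse problems with Gaussian-type noise. We verify that the potential \(\Phi(\boldsymbol y;s)\) satisfies the usual assumptions: (i) a lower bound (here trivially \(\Phi\ge 0\)); (ii) local boundedness of \(\Phi\) in \(s\) uniformly for \(\|\boldsymbol y\|_2<r\); (iii) measurability, via continuity of \(s\mapsto\Phi(\boldsymbol y;s)\); and (iv) a local Lipschitz bound in \(\boldsymbol y\). Once these hold, absolute continuity and the Radon--Nikodym formula \eqref{eq:Bayes} follow from the generalized Bayes theorem, provided \(0<Z(\boldsymbol y)<\infty\). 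Hellinger stability then follows from the standard estimate on \(d_{\mathrm{Hell}}\) using the exponential integrability of Lemma~\ref{thm:exp-int}.

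The core analytic step is continuity, and in fact Lipschitz continuity, of the forward map \(c\mapsto\mathcal G^h(c)\) on the bounded set of doping profiles \(\|c\|_{L^\infty}\le M\). For two profiles \(c_1,c_2\) with plateau values in \([-M,M]\), we write \(u_i=w_i+g_i\) and subtract the weak forms \eqref{eq:semi-weak-lifted}.

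\begin{itemize}
\item Testing with \(v=w_1-w_2\) and using the monotonicity of \(\sinh\), together with the uniform ellipticity of \(\epsilon\) and the Poincaré inequality on \(V_0\), gives \(\|u_1-u_2\|_{H^1}\lesssim \|c_1-c_2\|_{L^2}+\|g_1-g_2\|_{H^1}\).
\item The lifting term is controlled by the smooth dependence of \(\operatorname{arcsinh}(c_{\mathrm{p,n}}/2\delta^2)\) on the plateau values.
\item Testing with \(w_1-w_2\) produces the cross term \(\int(\sinh(w_1+g_1)-\sinh(w_2+g_1))(w_1-w_2)\ge0\). The remaining \(\sinh(\cdot+g_1)-\sinh(\cdot+g_2)\) term requires an \(L^\infty\) bound on \(u_i\). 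We obtain it by a maximum-principle argument: the constants \(\pm\operatorname{arcsinh}(M/2\delta^2)\) are super- and subsolutions, so \(|u_i|\le \operatorname{arcsinh}(M/2\delta^2)\).
\end{itemize}

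In the discrete setting \(\mathcal G^h\), the same energy argument applies to the finite-dimensional system \(\boldsymbol K\boldsymbol w+F(\boldsymbol w)=\boldsymbol b\), and the observation operator \(\mathcal O^h\) is a bounded linear map on the finite element space. This yields \(|\mathcal G^h(c_1)-\mathcal G^h(c_2)|\le L\,\|c_1-c_2\|\). In particular, we get \(\|\mathcal G^h(c)\|_2\le K_M\) uniformly on the prior support.

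With this bound the remaining verifications are routine.
\begin{itemize}
\item \(\Phi\) is continuous in \(s\), hence measurable, and satisfies \(0\le \Phi\le (K_M+r)^2/\sigma_{\mathrm{noise}}^2\).
\item Consequently \(Z(\boldsymbol y)\ge \exp(-(K_M+r)^2/\sigma^2_{\mathrm{noise}})>0\) and \(Z(\boldsymbol y)\le1\). The Radon--Nikodym representation then follows from the conditioning lemma of Dashti--Stuart, applied to the joint law of \((C,Y)\) on \(\mathbb S\times\mathbb R^{N_{\mathrm{meas}}}\) relative to \(\mathcal P_\ell\otimes\mathcal N(0,\sigma^2 I)\).
\item For stability we use \(|\Phi(\boldsymbol y_1;s)-\Phi(\boldsymbol y_2;s)|\le \sigma^{-2}_{\mathrm{noise}}(K_M+r)\|\boldsymbol y_1-\boldsymbol y_2\|_2\). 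This gives \(|Z(\boldsymbol y_1)-Z(\boldsymbol y_2)|\lesssim\|\boldsymbol y_1-\boldsymbol y_2\|_2\).
\item Splitting \(2d_{\mathrm{Hell}}^2\le I_1+I_2\) in the usual way (difference of \(e^{-\Phi/2}\) terms, and difference of \(Z^{-1/2}\) terms) and using \(|e^{-a/2}-e^{-b/2}|\le\frac12|a-b|\) for \(a,b\ge0\), we obtain \(d_{\mathrm{Hell}}\le\kappa\|\boldsymbol y_1-\boldsymbol y_2\|_2\), with \(\kappa\) depending only on \(r,M,\sigma_{\mathrm{noise}}\).
\end{itemize}
The same argument covers both \(\ell=1,2\), since only boundedness of \(C\), and not smoothness of \(F_\ell\), is used for the Bayes and stability parts.

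The main obstacle we expect is the forward stability estimate. It requires, first, handling the \(\sinh\) nonlinearity together with parameter-dependent Dirichlet data, which is why we need the \(L^\infty\) bound on \(u\). It also requires justifying that the flux observation \(\int_{I_\ell}\nabla u\cdot\boldsymbol n\,ds\) is controlled by \(H^1\)-type norms. In the continuum this is not bounded on \(H^1\), so we would either work with \(\mathcal G^h\), where norm equivalence on finite element spaces suffices, or define the flux weakly via a test function equal to \(1\) on \(I_\ell\). We would try the discrete route first, since the theorem is stated for \(\Phi\) built from \(\mathcal G^h\).
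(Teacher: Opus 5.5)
Your proposal is correct. It shares the paper's skeleton: forward stability, then bounds on $\Phi$, then the Radon--Nikodym form, then the $I_1+I_2$ splitting of the Hellinger integral. The analytic inputs, however, differ in three ways.

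\begin{enumerate}
\item The paper's appendix (Theorem~\ref{posterior-wellposedness}) works with the continuum map $\mathcal G=\mathcal O\circ\mathcal W$ and simply assumes $\mathcal O$ is bounded on $H^1(\Omega)$. As you observe, this is false for the flux functional. You instead work with $\mathcal G^h$, which is the operator that actually appears in $\Phi$.
\item The paper proves Lipschitz continuity of $\mathcal W$ only for fixed Dirichlet data, with explicit constant $\eta=\frac{k_P}{\epsilon_0}\sqrt{1+k_P^2}$, and defers plateau-dependent data to a remark. You treat the lifting $g(c_{\mathrm p},c_{\mathrm n})$ directly, which is exactly why you need the $L^\infty$ bound.
\item The paper controls $Z$ and the Hellinger integral through the growth bound $\|\mathcal G(c)\|\le C(1+\|c\|)$ and second moments of the prior. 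It then asserts $|z_1^{-1/2}-z_2^{-1/2}|\le C|z_1-z_2|$ without a lower bound on $z$ that is uniform in $\|\boldsymbol y\|<r$. You use the bounded plateau support from Lemma~\ref{thm:exp-int} to get $\|\mathcal G^h\|\le K_M$. This yields $Z(\boldsymbol y)\ge e^{-(K_M+r)^2/\sigma_{\mathrm{noise}}^2}$ uniformly, so every constant is explicit.
\end{enumerate}

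Your route also makes clear that the Bayes and stability steps need only measurability and boundedness of $\Phi$, which is why $\ell=1$ is covered. The paper's route, in exchange, aims at a mesh-independent continuum statement and uses only moment bounds. It would therefore extend to plateau priors without bounded support, given a separate argument for $\inf Z>0$; the theorem as stated does not exclude such priors. Your $\kappa$, by contrast, depends on $h$ through the norm equivalence used for $\mathcal O^h$.

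Two small repairs are needed.

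\textbf{The $L^\infty$ bound in the discrete setting.} Your maximum-principle argument does not transfer verbatim to the P1 system. The test function $(u_h-\bar u)^+$ is not in the finite element space, and a discrete maximum principle would need M-matrix structure and mass lumping. You do not need it, however. On the finite element space, $\|\cdot\|_{L^\infty}\le C_h\|\cdot\|_{H^1}$, so the discrete energy bound already gives the $L^\infty$ control. Continuity of the discrete solution map in $(\boldsymbol c,c_{\mathrm p},c_{\mathrm n})$ then follows from the implicit function theorem, since $\boldsymbol K+J_F(\boldsymbol w)$ is symmetric positive definite on the free degrees of freedom.

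\textbf{The plateau dependence.} With plateau-dependent Dirichlet data, the right side of your Lipschitz bound must include $|c_{\mathrm p,1}-c_{\mathrm p,2}|+|c_{\mathrm n,1}-c_{\mathrm n,2}|$, not just $\|c_1-c_2\|$. Correspondingly, $\Phi$ is a function of $\boldsymbol\theta$ rather than of $s$ alone. The conditioning lemma is therefore most cleanly applied on $\mathbb X\times\mathbb R\times\mathbb R$ with prior $\Pi$. This ambiguity is inherited from the statement itself.
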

\begin{proof}
We refer the reader to Appendix~\ref{s:inverse problem} for the proof.
\end{proof}

After a truncation of the KL expansion, we use the parameter vector \(\boldsymbol{\theta}\) introduced in Section~\ref{sec:joint_prior} and the notation \(F_\ell(\boldsymbol\theta)\) introduced in \eqref{eq:pf_map_finite}. The negative log-posterior is then obtained by combining the negative log-likelihood \(\Phi(\boldsymbol y;\boldsymbol\theta)\) with the negative log-prior \(\Psi_\Theta(\boldsymbol\theta)\):
\begin{equation}
\begin{aligned}
    \mathcal J(\boldsymbol{\theta})
    &:=
    \Phi(\boldsymbol y;\boldsymbol{\theta})
    +
    \Psi_\Theta(\boldsymbol{\theta}) \\
    &=
    \frac{1}{2\sigma^2_{\mathrm{noise}}}
    \left\|
    \mathcal G^h \big(F_\ell(\boldsymbol{\theta})\big)
    -
    \boldsymbol y
    \right\|_2^2
    +
    \Psi_\Theta(\boldsymbol{\theta}).
\end{aligned}
\label{eq:negative-log-posterior}
\end{equation}
Equivalently, the finite-dimensional posterior density is characterized, up to a normalizing constant, by
\begin{equation}
    \pi(\boldsymbol{\theta}\mid \boldsymbol y)
    \propto
    \exp\left(
        -
        \mathcal J(\boldsymbol{\theta})
    \right).
    \label{eq:finite-dimensional-posterior}
\end{equation}
The functional \(\mathcal J\) is used in Section~\ref{subsec:adjoint-gradient} for adjoint-based gradient computation, MAP estimation, and gradient-based posterior sampling.

\section{Gradient-Based Inference}\label{G-inference}

In this section, we describe how to compute gradient information for the semiconductor inverse problem in order to enable efficient gradient-based optimization and sampling. We adopt the adjoint method to compute derivatives of a scalar objective functional with respect to the inferred parameters. Rather than explicitly forming the full Jacobian of the PDE solution with respect to all parameter directions, which would require one sensitivity solve per parameter dimension, the adjoint approach introduces an auxiliary (adjoint) variable and evaluates the gradient through a single adjoint solve combined with sensitivity terms from the forward solution. This yields the gradient with respect to all parameters at a computational cost that is essentially independent of the parameter dimension, making it significantly more efficient for high-dimensional inference problems.

\subsection{Adjoint-Based Gradient Computation}
\label{subsec:adjoint-gradient}

In this subsection, we derive the gradient of the finite-dimensional negative log-posterior \(\mathcal J\) defined in \eqref{eq:negative-log-posterior}. This gradient is used both in MAP estimation and in the gradient-based posterior sampling algorithm. Throughout this subsection, we assume that the doping profile is constructed using the differentiable sigmoid map $c = F_2(\boldsymbol\theta):= F_2\big(\mathcal R_N(\boldsymbol z),c_{\mathrm p},c_{\mathrm n}\big),$ as defined in \eqref{eq:pf_map_finite}.  Recall that the finite-dimensional parameter vector is
\(\boldsymbol\theta=(\boldsymbol z,c_{\mathrm p},c_{\mathrm n})\), where
\(\boldsymbol z\in\mathbb R^{N_{\mathrm{KL}}}\) is the truncated KL coefficient vector, \(\mathcal R_N(\boldsymbol z)\) is the reconstructed field defined in \eqref{eq:KL-reconstruction-map}, and \(c_{\mathrm p}\) and \(c_{\mathrm n}\) denote the plateau doping levels.
% \bbkc{here again we have the bad notation!}  defined in \eqref{eq:pf_map_finite}. Recall the fininte dimention vectopr \(\boldsymbol\theta=(\boldsymbol z,c_{\mathrm p},c_{\mathrm n})\), where \(\boldsymbol z\) contains the KL coefficient vector used to construct the truncated Gaussian field \(X_N(\boldsymbol z)\) in \eqref{eq:truncated_KL} \bbkc{same notation issue.}, and \(c_{\mathrm p}\) and \(c_{\mathrm n}\) denote the plateau doping levels. 

From \eqref{eq:negative-log-posterior}, the gradient decomposes into a likelihood contribution and a prior contribution:
\begin{equation}
    \nabla_{\boldsymbol\theta}\mathcal J(\boldsymbol\theta)
    =
    \nabla_{\boldsymbol\theta}
    \Phi(\boldsymbol y;\boldsymbol\theta)
    +
    \nabla_{\boldsymbol\theta}
    \Psi_\Theta(\boldsymbol\theta).
    \label{eq:NLP-gradient-decomposition}
\end{equation}
% We first summarize the parameter dependencies in the forward model. The likelihood depends on the predicted data through the lifted solution \(u=w+g\). The function \(w\) depends on \(\boldsymbol\theta\) implicitly through the weak form, while the lifting \(g\) depends on the plateau values \(c_{\mathrm p}\) and \(c_{\mathrm n}\) through the Dirichlet boundary conditions. The doping field \(c=F_2(\boldsymbol\theta)\) depends on both the KL coefficients \(\boldsymbol z\) and the plateau values. These dependencies are summarized in Fig.~\ref{fig:dependency}.
We first summarize the parameter dependencies in the forward map
\(\mathcal G^h\big(F_2(\boldsymbol{\theta})\big)\). The doping field \(c=F_2(\boldsymbol\theta)\) depends on both the KL coefficients \(\boldsymbol z\) and the plateau values \(c_{\mathrm p}\) and \(c_{\mathrm n}\). The lifting function \(g=g(c_{\mathrm p},c_{\mathrm n})\) depends on the plateau values through the Dirichlet boundary conditions
$ g|_{\partial \Omega_2} = \operatorname{arcsinh}\left(c_{\mathrm p}/({2\delta^2})\right)$ and $g|_{\partial \Omega_3} = \operatorname{arcsinh}\left(c_{\mathrm n}/({2\delta^2})\right).$ The auxiliary solution \(w=w(c,g)\) depends implicitly on both \(c\) and \(g\) through the lifted weak form; see \eqref{eq:semi-weak-lifted}. Consequently, the full solution \(u=w+g\), the predicted data, and hence the likelihood all depend on \(\boldsymbol\theta\).
%This parameter dependence of the lifting is absent in the known-plateau setting, but becomes essential in the joint reconstruction problem.
The adjoint formulation must therefore account for both the perturbation of the interior source term and the perturbation of the Dirichlet lifting. These dependencies are summarized in Fig.~\ref{fig:dependency}.

\begin{figure}[ht]
\centering

\resizebox{\textwidth}{!}{%
\begin{tikzpicture}[
  node distance=1.0cm and 1.2cm,
  box/.style={
    draw,
    rounded corners,
    thick,
    minimum width=1.1cm,
    minimum height=0.6cm,
    align=center,
    font=\normalsize
  },
  param/.style={
    box,
    fill=cyan!15
  },
  jbox/.style={
    box,
    fill=yellow!35
  },
  groupbox/.style={
    draw,
    rounded corners,
    thick,
    inner sep=6pt
  },
  arrow/.style={->, thick}
]

% Parameters grouped as theta
\node[param] (x) {$\boldsymbol{z}$};
\node[param, below=0.35cm of x] (cp) {$c_{\mathrm p}$};
\node[param, below=0.35cm of cp] (cn) {$c_{\mathrm n}$};

\node[
  groupbox,
  fit=(x)(cp)(cn),
  label=left:{$\boldsymbol \theta$}
] (theta) {};

% Intermediate variables
\node[box, right=2.2cm of x] (c) {$c=F_2(\boldsymbol\theta)$};

\node[box, below=0.7cm of c] (g) {$g(c_{\mathrm p},c_{\mathrm n})$};

\node[box, right=2.2cm of c, yshift=0.0cm] (w) {$w(c,g)$};

\node[box, right=2.05cm of g] (u) {$u=w+g$};
\node[box, right=of u] (y) {$\mathcal G^h(c)$};
\node[box, right=of y] (phi) {$\Phi$};

\node[box, above=0.55cm of phi] (phiprior) {$\Psi_\Theta$};

% Objective
% \node[jbox, right=1.5cm of phi] (J) {$\mathcal J$};
\node[jbox, right=1.5cm of $(phi)!0.5!(phiprior)$] (J) {$\mathcal J$};

% Arrows into g
% \draw[arrow] (x) -- (g);
\draw[arrow] (cp) -- (g);
\draw[arrow] (cn) -- (g);

% Arrows into c
% \draw[arrow] (x) -- (c);
% \draw[arrow] (cp) -- (c);
% \draw[arrow] (cn) -- (c);
\draw[arrow] ($(theta.east)+(0,0.96)$) -- (c.west);
% \draw[arrow] (theta.east) -- ++(0,0.95) -- (c.west);
% \draw[arrow] (cp) -- (c);
% \draw[arrow] (cn) -- (c);

% Dependencies into w
% \draw[arrow] (x) -- (w);
% \draw[arrow] (c) -- (w);
\draw[arrow] (c) -- (w);
\draw[arrow] (g) -- (w);

% Forward model dependencies
\draw[arrow] (w) -- (u);
\draw[arrow] (g) -- (u);

\draw[arrow] (u) -- (y);
\draw[arrow] (y) -- (phi);
\draw[arrow] (phi) -- (J);

% Prior term: theta box to Phi_prior from above
\draw[arrow] (theta.north) -- ++(0,0.2) -| (phiprior.north);

% \draw[arrow] (phiprior) -| (J);
\draw[arrow] (phiprior) -- (J);

\end{tikzpicture}
}
\caption{Dependency chart for finite-dimensional parameters in the semiconductor inverse problem.}
\label{fig:dependency}
\end{figure}
%Let \(u=w+g\) be the lifted state introduced in \eqref{eq:semi-weak-lifted}, where \(w\in V_0\) and \(g\) satisfies the Dirichlet boundary conditions. In the joint reconstruction case, the plateau values \(c_{\mathrm p}\) and \(c_{\mathrm n}\) are components of \(\boldsymbol\theta\). Therefore, the lifting \(g\) depends on \(\boldsymbol\theta\) through the Dirichlet boundary functions \(f_1\) and \(f_2\) in \eqref{eq:semi-PDE-strong}.

We seek the directional derivative of the cost functional $\mathcal J$ with respect to $\boldsymbol{\theta}$ in the direction $\dot{\boldsymbol{\theta}}$, defined by
\begin{equation} \label{eq:directional-derivative}
D_{\boldsymbol{\theta}} \mathcal J [\dot{\boldsymbol{\theta}}]
:=
\left.\frac{d}{d\varepsilon}\mathcal J(\boldsymbol{\theta}+\varepsilon \dot{\boldsymbol{\theta}})\right|_{\varepsilon=0}.
\end{equation}
Once this directional derivative is characterized, the gradient $\nabla_{\boldsymbol{\theta}}\mathcal J$ can be recovered by evaluating the directional derivative along the basis directions of the parameter space. Expanding \eqref{eq:directional-derivative} yields
% \begin{equation}
%     \begin{aligned}
%     D_{\boldsymbol{\theta}} \mathcal J[\dot{\boldsymbol{\theta}}]
%     = &
%     \frac{1}{\sigma_{\mathrm{noise}}^2}
%     \left\langle
%     r,
%     \mathcal O^h(\dot w+\dot g)
%     \right\rangle_{\ell^2}
%     +
%     \left\langle
%     \dot{\boldsymbol{x}},
%     \mathcal C^{-1}\boldsymbol{x}
%     \right\rangle =\\
%     &\frac{1}{\sigma_{\mathrm{noise}}^2}
%     \left\langle
%     r,
%     \mathcal O^h(\dot w)
%     \right\rangle_{\ell^2}
%     +
%     \frac{1}{\sigma_{\mathrm{noise}}^2}
%     \left\langle
%     r,
%     \mathcal O^h(\dot g)
%     \right\rangle_{\ell^2}
%     +
%     \left\langle
%     \dot{\boldsymbol{x}},
%     \mathcal C^{-1}\boldsymbol{x}
%     \right\rangle.
%     \end{aligned}
% \end{equation}
\begin{equation}
    \begin{aligned}
    D_{\boldsymbol{\theta}} \mathcal J[\dot{\boldsymbol{\theta}}]
    =
    &\frac{1}{\sigma_{\mathrm{noise}}^2}
    \left\langle
    r,
    \mathcal O^h(\dot w+\dot g)
    \right\rangle_{\ell^2}
    +
    D_{\boldsymbol{\theta}}\Psi_\Theta(\boldsymbol{\theta})
    [\dot{\boldsymbol{\theta}}] \\
    =
    &\frac{1}{\sigma_{\mathrm{noise}}^2}
    \left\langle
    r,
    \mathcal O^h(\dot w)
    \right\rangle_{\ell^2}
    +
    \frac{1}{\sigma_{\mathrm{noise}}^2}
    \left\langle
    r,
    \mathcal O^h(\dot g)
    \right\rangle_{\ell^2}
    +
    D_{\boldsymbol{\theta}}\Psi_\Theta(\boldsymbol{\theta})
    [\dot{\boldsymbol{\theta}}],
    \end{aligned}
    \label{eq:DJ-expanded}
\end{equation}
Here, we have defined the residual vector $r := \mathcal O^h(u)-\boldsymbol y$ and used the quadratic structure of the $\ell^2$-norm in the likelihood. In the following, we focus on the likelihood contribution. The perturbation \(\dot g\) can be computed directly from the dependence of the lifting on \(c_{\mathrm p}\) and \(c_{\mathrm n}\). The perturbation \(\dot w\in V_0\) is obtained by differentiating the lifted weak form \eqref{eq:semi-weak-lifted}, which gives the sensitivity equation
% Furthermore, $\dot g$ can be directly computed from its immediate relation to $\boldsymbol{\theta}$ and $\dot w$ is obtained by applying \eqref{eq:directional-derivative} to the weak form PDE problem \eqref{eq:semi-weak-lifted} and solving
\begin{equation} \label{eq:sensitivity}
    \begin{aligned}
    \int_{\Omega} \epsilon \nabla \dot w \cdot \nabla v \, d\boldsymbol{x}
    &+
    2\delta^2 \int_{\Omega}
    \cosh(w+g)\dot w v \, d\boldsymbol{x}
    = \int_\Omega \dot c\,v\,d\boldsymbol x
    \\
    &- \int_{\Omega} \epsilon \nabla \dot g \cdot \nabla v \, d\boldsymbol{x}
    -
    2\delta^2 \int_{\Omega}
    \cosh(w+g)\dot g v \, d\boldsymbol{x},
    \qquad \forall v\in V_0.
    \end{aligned}
\end{equation}
% Therefore, we can evaluate the first term of $D_{\boldsymbol{\theta}} \mathcal J[\dot{\boldsymbol{\theta}}]$ by first solving \eqref{eq:semi-weak-lifted} for $(u,w,g)$ and then solving the sensitivity equation \eqref{eq:sensitivity} for $\dot w$. 
Thus, the likelihood contribution involving \(\dot w\) could be evaluated by first solving the nonlinear forward problem \eqref{eq:semi-weak-lifted} for \(w\), recovering \(u=w+g\), and then solving the sensitivity equation \eqref{eq:sensitivity} for \(\dot w\).
% However, this approach is computationally expensive when the dimension of \(\boldsymbol\theta\) is large. Therefore, we next introduce an adjoint equation that eliminates the explicit dependence on \(\dot w\).

% We remark that the immediate relation of $c$ to $\boldsymbol{\theta}$ through $F_2$ (cf. \eqref{eq:piecewise-constant}) yields
% \[
% \begin{aligned}
%     \dot c
%     &=
%     \left(1-\frac{1}{1+\exp(-k\mathcal C^{1/2}\boldsymbol x)}\right)\dot c_{\mathrm p}
%     +
%     \frac{1}{1+\exp(-k\mathcal C^{1/2}x)}\dot c_{\mathrm n} \\
%     &\quad+
%     (c_{\mathrm n}-c_{\mathrm p})
%     k
%     \frac{\exp(-k\mathcal C^{1/2} \boldsymbol x)}
%     {\left(1+\exp(-k\mathcal C^{1/2}\boldsymbol x)\right)^2}
%     \mathcal C^{1/2}\dot {\boldsymbol x}.
% \end{aligned}
% \]

The perturbation $\dot c$ is obtained by differentiating the finite-dimensional push-forward map $F_2(\boldsymbol\theta)=F_2(\mathcal R_N(\boldsymbol z),c_{\mathrm p},c_{\mathrm n})$ introduced in \eqref{eq:pf_map_finite}, i.e.,
$\dot c =D_{\boldsymbol\theta}F_2(\boldsymbol\theta)[\dot{\boldsymbol\theta}]$. We define the corresponding finite-dimensional sigmoid factor by
\[ s_N := \frac{1}{1+\exp\!\left(-k\mathcal R_N(\boldsymbol z)\right)}, \]
where \(\mathcal R_N\) is the KL reconstruction map defined in \eqref{eq:KL-reconstruction-map}.
% where $X_N(\boldsymbol z)$ \bbkc{same notation issue} is defined in \eqref{eq:truncated_KL}. 
Since
$ c = F_2(\boldsymbol\theta) = c_{\mathrm p} + (c_{\mathrm n}-c_{\mathrm p})s_N,$
we obtain
\begin{equation}\label{eq:dc-pushforward}
\dot c = (1-s_N)\dot c_{\mathrm p}
+ s_N\dot c_{\mathrm n} + \left[(c_{\mathrm n}-c_{\mathrm p})
k~ s_N(1-s_N) \right] \sum_{j=1}^{N_{\mathrm{KL}}}
\sqrt{\lambda_j}\dot z_j e_j .
\end{equation}
Here, $\dot c_{\mathrm p}$ and $\dot c_{\mathrm n}$ denote perturbations of the plateau values, while $\dot z_j$ denotes the perturbation of the $j$th KL coefficient.

However, to evaluate the full gradient, we must repeatedly solve the sensitivity equation for each basis direction of the parameter space. To avoid this computational inefficiency, we introduce the adjoint variable $p\in V_0$, defined as the solution of
\begin{equation} \label{eq:adjoint}
    \int_{\Omega} \epsilon \nabla v \cdot \nabla p \, d\boldsymbol{x}
    +
    2\delta^2 \int_{\Omega}
    \cosh(w+g)vp \, d\boldsymbol{x}
    =
    \frac{1}{\sigma_{\mathrm{noise}}^2}
    \left\langle
    r,
    \mathcal O^h(v)
    \right\rangle_{\ell^2},
    \qquad \forall v\in V_0.
\end{equation}
Let $a(\cdot,\cdot)$ denote the bilinear form associated with the left-hand side of \eqref{eq:adjoint}. By the definition of the adjoint problem, evaluating at $v=\dot w$ yields
\begin{equation} \label{eq:adjoint-rhs1}
    a(\dot w,p)
    =
    \frac{1}{\sigma_{\mathrm{noise}}^2}
    \left\langle
    r,
    \mathcal O^h(\dot w)
    \right\rangle_{\ell^2},
\end{equation}
which corresponds to the directional derivative term of interest. On the other hand, using \(v=p\) in the sensitivity equation \eqref{eq:sensitivity}, we obtain
\begin{equation} \label{eq:adjoint-rhs2}
    a(\dot w,p)
    = \int_\Omega \dot c\,p\,d\boldsymbol x
    - \int_{\Omega} \epsilon \nabla \dot g \cdot \nabla p \, d\boldsymbol{x}
    -
    2\delta^2 \int_{\Omega}
    \cosh(w+g)\dot g p \, d\boldsymbol{x}.
\end{equation}

Combining \eqref{eq:DJ-expanded}, \eqref{eq:adjoint-rhs1}, and \eqref{eq:adjoint-rhs2}, we obtain
\begin{equation}
\begin{aligned}
D_{\boldsymbol{\theta}}\mathcal J[\dot{\boldsymbol{\theta}}]
=
&\int_\Omega \dot c\,p\,d\boldsymbol x
-
\int_{\Omega} \epsilon \nabla \dot g \cdot \nabla p \, d\boldsymbol{x}
-
2\delta^2 \int_{\Omega}
\cosh(w+g)\dot g\,p \, d\boldsymbol{x}
\\
&+
\frac{1}{\sigma_{\mathrm{noise}}^2}
\left\langle
r,
\mathcal O^h(\dot g)
\right\rangle_{\ell^2}
+
D_{\boldsymbol{\theta}}\Psi_\Theta(\boldsymbol{\theta})
[\dot{\boldsymbol{\theta}}].
\end{aligned}
\label{eq:adjoint-gradient-directional}
\end{equation}
The right-hand side of \eqref{eq:adjoint-rhs2} no longer depends on the sensitivity variable $\dot w$, but only on the adjoint variable $p$ and the perturbation directions $\dot g$ and $\dot c$. Therefore, for a fixed parameter value $\boldsymbol\theta$, the adjoint equation \eqref{eq:adjoint} needs to be solved only once. Subsequently, for each new direction $\dot{\boldsymbol{\theta}}$, the directional derivative can be evaluated by assembling the right-hand side of \eqref{eq:adjoint-rhs2}, the explicit observation term involving $\mathcal O^h(\dot g)$ in \eqref{eq:adjoint-gradient-directional}, and the prior contribution. The contributions of the negative log priors are obtained analytically from the corresponding prior distributions.

%The prior contribution $D_{\boldsymbol{\theta}}\Psi_\Theta(\boldsymbol\theta)[\dot{\boldsymbol\theta}]$ can be evaluated directly from the chosen prior model. The KL coefficient part follows from the standard Gaussian prior on \(\boldsymbol z\), giving the contribution \(\boldsymbol z\) to the gradient. Terms involving $c_{\mathrm p}$ and $c_{\mathrm n}$ depend on the prior distributions assigned to the plateau values.

\subsection{MAP estimation}
\label{subsec:MAP}

After the finite-dimensional approximation of the Bayesian formulation, the unknown is represented by a parameter vector
\(\boldsymbol\theta\in\mathbb R^d\), where 
% \(d=N_{\mathrm{KL}}\) in the known-plateau setting and
\(d=N_{\mathrm{KL}}+2\).
% in the joint-inference setting.
A point estimate is obtained by maximizing the finite-dimensional posterior density, or equivalently by minimizing the negative log-posterior functional \(\mathcal J\) defined in \eqref{eq:negative-log-posterior}. The MAP estimate is
therefore defined as
\begin{equation}
    \boldsymbol\theta_{\mathrm{MAP}}
    =
    \operatorname*{arg\,min}_{\boldsymbol\theta\in\mathbb R^d}
    \mathcal J(\boldsymbol\theta).
    \label{eq:MAP-estimate}
\end{equation}
% Due to the nonlinear dependence of the forward model on the doping field, together with the nonlinear pushforward parametrization of the prior, the objective functional \(\mathcal J\) is generally nonconvex and may admit multiple local minimizers. Consequently, the MAP estimate is computed using an iterative gradient-based optimization method.
The gradient \(\nabla_{\boldsymbol\theta}\mathcal J(\boldsymbol\theta)\) is computed using the adjoint formulation derived in subection~\ref{subsec:adjoint-gradient}.
%This avoids forming the full parameter-to-observable Jacobian and yields the gradient of the scalar objective with a number of PDE solves that is essentially independent of the parameter dimension. As a result, MAP estimation in the reduced latent parameter space becomes computationally feasible even when the underlying finite-element discretization is high-dimensional.

\subsection{Posterior sampling and uncertainty quantification}
\label{subsec:NUTS_UQ}

While the MAP estimate provides a single representative point of the posterior distribution, it does not characterize the range of parameter configurations that are consistent with the data and the prior. To quantify uncertainty in the reconstructed doping profile, we sample from the finite-dimensional posterior density \(\pi(\boldsymbol\theta\mid \boldsymbol y)\) defined in
\eqref{eq:finite-dimensional-posterior}.

Let \(\{\boldsymbol\theta^{(s)}\}_{s=1}^{N_s}\) denote samples generated by a Markov chain Monte Carlo method targeting \(\pi(\boldsymbol\theta\mid \boldsymbol y)\). The corresponding realizations of the doping field at each spatial point $\boldsymbol x\in\Omega$ are obtained through the pushforward map introduced in \eqref{eq:pf_map_finite} as
\[
c^{(s)}(\boldsymbol x)
=
F_2(\boldsymbol\theta^{(s)})(\boldsymbol x),
\qquad s=1,\ldots,N_s .
\]
Posterior moments of the doping profile are then approximated pointwise by ergodic averages,
\begin{equation*}
    \bar c(\boldsymbol x)
    =
    \frac{1}{N_s}
    \sum_{s=1}^{N_s}
    c^{(s)}(\boldsymbol x),   \qquad  \operatorname{Var}[c(\boldsymbol x)]
    \approx
    \frac{1}{N_s-1}
    \sum_{s=1}^{N_s}
    \left(
    c^{(s)}(\boldsymbol x)-\bar c(\boldsymbol x)
    \right)^2 .
\end{equation*}
These quantities provide spatially resolved measures of posterior uncertainty, highlighting regions where the reconstruction is strongly constrained by the data and regions where it remains mainly influenced by the prior.

To generate posterior samples, we employ the No-U-Turn Sampler (NUTS)
\cite{hoffman2014no}, an adaptive variant of Hamiltonian Monte Carlo. NUTS uses gradient information to construct long-distance proposals in parameter space and automatically adapts the integration path length, thereby reducing the need for manual tuning. The required gradient \(\nabla_{\boldsymbol\theta}\mathcal J(\boldsymbol\theta)\) is computed using the adjoint formulation derived in subection~\ref{subsec:adjoint-gradient}.

\section{Numerical Results }
\label{sec:numerical}

We evaluate the performance of the proposed Bayesian reconstruction framework on synthetic benchmark problems for the inverse problem described in Section \ref{ss:doping}. The experiments are designed to address the following key questions: (i)~Can the framework accurately recover the junction interface and doping plateau levels from noisy boundary flux data? (ii)~Is the method robust with respect to interface shape? (iii)~Does gradient-based sampling via NUTS yield a measurable efficiency gain over the gradient-free pCN sampler?

All simulations were performed in Python 3.11.13 on a MacBook Pro equipped with an Apple M4 Pro chip with 14 cores and 24\,GB unified memory, running macOS 26.4.1. The implementation was developed using FEniCS 2019.1.0, dolfin-adjoint 2018.1.0, PyTorch 2.7.1, and Pyro 1.9.1. The MAP estimation and pCN sampling algorithms were implemented using custom-developed Python code, while adjoint-based gradient computations were carried out using the dolfin-adjoint framework. The source code and example scripts are publicly available at \url{https://github.com/hassanyazdanian/adjoint-bayesian-semiconductor.git}.

% \textcolor{brown}{Do we need really to make the codes public? We have still a lot of ideas to continue in this direction... We shall say codes are available upon request (?) If you two are voting for making the code online, I am happy and will follow you :) }

% ---------------------------------------------------------------
\subsection{Experimental Setup}
\label{subsec:setup}
% ---------------------------------------------------------------

All experiments are conducted on the unit square domain $\Omega=(0,1)^2$, with boundary decomposition $\partial\Omega = \partial\Omega_1 \cup \partial\Omega_2 \cup \partial\Omega_3$ as defined in Section~\ref{ss:doping}. The domain is discretized on a uniform finite element mesh with $N=48$ elements per spatial direction, %using continuous piecewise-linear (CG1) finite element basis functions, 
using first order Lagrange basis functions, yielding $(48+1)^2 = 2401$ degrees of freedom for the state.

The ground-truth doping field $c^\dagger(\boldsymbol{x})$ is piecewise constant, with $p$-type plateau $c_{\mathrm{p}} = -2$ and $n$-type plateau $c_{\mathrm{n}} = 1$. Two representative configurations are considered: Example~1 employs a straight planar interface as a baseline geometry (Fig. \ref{fig:ex1_noise5}a), while Example~2 considers a curved interface motivated by geometrically structured or radial pn-junctions (Fig. \ref{fig:ex2_noise5}b). The latter induces a more nonlinear forward response and a more challenging posterior geometry.

Synthetic measurements are generated by solving the forward problem \eqref{eq:semi-PDE-strong} for the ground-truth doping field $c^\dagger$ to obtain $\boldsymbol y^{\mathrm{true}} = \mathcal G^h(c^\dagger)$, and then adding Gaussian noise,
\[
\boldsymbol y
=
\boldsymbol y^{\mathrm{true}}
+
\boldsymbol\varepsilon,
\qquad
\boldsymbol\varepsilon \sim \mathcal N(0,\sigma_\mathrm{noise}^2 I).
\]
The noise standard deviation $\sigma_\mathrm{noise}$ is set proportional to the root-mean-square of the demeaned noise-free signal, $\sigma_\mathrm{noise} = \rho\|\boldsymbol y^{\mathrm{true}}-\overline{y}^{\mathrm{true}}\boldsymbol 1\|_2/\sqrt{N_\mathrm{meas}}$, where $N_\mathrm{meas}$ is the number of measurements and $\overline{y}^{\mathrm{true}}$ is the sample mean of the noise-free data.

\subsection{Inference Scenarios}

We consider two inference settings. In the first setting, the plateau values \(c_{\mathrm p}\) and \(c_{\mathrm n}\) are assumed known, and only the latent Gaussian field controlling the interface geometry is inferred. Using the KL reconstruction map defined in \eqref{eq:KL-reconstruction-map}, the unknown is the coefficient vector \(\boldsymbol z\in\mathbb R^{N_{\mathrm{KL}}}\), and the negative log-posterior is \begin{equation*}
\mathcal J(\boldsymbol z) = 
\frac{1}{2\sigma^2_{\mathrm{noise}}} \left\| \mathcal G^h\big(F_2(\boldsymbol z)\big) - \boldsymbol y \right\|_2^2 + \frac12\|\boldsymbol z\|_2^2 . 
\end{equation*} 
Here, \(F_2(\boldsymbol z)\) is understood as
\(F_2\big(\mathcal R_N(\boldsymbol z),c_{\mathrm p},c_{\mathrm n}\big)\),
as in \eqref{eq:pf_map_finite}, for fixed \(c_{\mathrm p}\) and \(c_{\mathrm n}\). The operator \(\mathcal G^h\) denotes the discretized forward map from the doping profile to the measured boundary flux. Since the Dirichlet boundary values are fixed, the lifting function is independent of the inferred parameters.
% We consider two inference settings. In the first setting, the plateau values \(c_{\mathrm p}\) and \(c_{\mathrm n}\) are assumed known, and only the latent Gaussian field controlling the interface geometry is inferred. In the truncated KL expansion \eqref{eq:truncated_KL} \bbkc{notation issue sneaks here too}, the unknown is the coefficient vector \(\boldsymbol x\in\mathbb R^{N_{\mathrm{KL}}}\), and the negative log-posterior is
% \begin{equation*}
%     \mathcal J(\boldsymbol x)
%     =
%     \frac{1}{2\sigma^2_{\mathrm{noise}}}
%     \left\|
%     \mathcal G^h\big(F_2(\boldsymbol x)\big)
%     -
%     \boldsymbol y
%     \right\|_2^2
%     +
%     \frac12\|\boldsymbol x\|_2^2 .
%     \label{eq:posterior-known-plateau}
% \end{equation*}
% Here \(\mathcal G^h\) denotes the discretized forward map from the KL coefficients to the measured boundary flux. Since the Dirichlet boundary values are fixed, the lifting function is independent of the inferred parameters.
%, and the simplified gradient expression \eqref{eq:NLL-gradient-no-boundary} applies.

In the second setting, the plateau values are inferred jointly with the interface geometry. The parameter vector is \(\boldsymbol\theta=(\boldsymbol z,c_{\mathrm p},c_{\mathrm n})\), and the negative log-posterior takes the form
\begin{equation*}
    \mathcal J(\boldsymbol\theta)
    =
    \frac{1}{2\sigma^2_{\mathrm{noise}}}
    \left\|
    \mathcal G^h\big(F_2(\boldsymbol\theta)\big)
    -
    \boldsymbol y
    \right\|_2^2
    +
    \Psi(\boldsymbol\theta),
\end{equation*}
where \(\Psi(\boldsymbol\theta)\) contains the Gaussian contribution
\(\frac12\|\boldsymbol z\|_2^2\) and the prior contributions of the plateau parameters. The plateau values are assigned independent uniform priors on prescribed intervals,
\[
c_{\mathrm p} \sim \mathcal U(c_{\mathrm p}^{\min},c_{\mathrm p}^{\max}),
\qquad
c_{\mathrm n} \sim \mathcal U(c_{\mathrm n}^{\min},c_{\mathrm n}^{\max}).
\]
For sampling, we introduce unconstrained variables \(r_{\mathrm p}\) and
\(r_{\mathrm n}\), which are mapped to the physical intervals by
\[
c_{\mathrm p}
=
c_{\mathrm p}^{\min}
+
(c_{\mathrm p}^{\max}-c_{\mathrm p}^{\min})S(r_{\mathrm p}),
\qquad
S(r)=\frac{1}{1+\exp(-r)},
\]
and analogously for \(c_{\mathrm n}\). The corresponding Jacobian correction is included in \(\Psi\), so that the induced priors on \(c_{\mathrm p}\) and \(c_{\mathrm n}\) remain uniform. In this setting, the parameters affect both the interior doping field and the Dirichlet boundary conditions.%, requiring the full adjoint gradient formulation \eqref{eq:NLL-gradient-entry}.

% ---------------------------------------------------------------
\subsection{Interface Reconstruction with Known Plateau Values}
\label{subsec:scenario_A}
% ---------------------------------------------------------------
We first examine the known-plateau setting for the two junction geometries shown in Figs.~\ref{fig:ex1_noise5}(a) and~\ref{fig:ex2_noise5}(a). The covariance operator in \eqref{eq:matern-cov} is used with \(\tau=\ell^{-2}\) and \(\alpha=\nu+1\), where \(\ell=0.15\) and \(\nu=3\), corresponding to \(\tau=0.15^{-2}\) and \(\alpha=4\). In the numerical implementation, the eigenvalue problem associated with \eqref{eq:matern-cov} is solved with homogeneous Neumann boundary conditions. The truncated KL expansion is constructed using a variance-retention criterion, retaining \(99.8\%\) of the prior variance, which results in \(N_{\mathrm{KL}}=30\) coefficients. The sigmoid steepness parameter is set to \(k=20\), producing sharp but everywhere differentiable transitions in the doping field. Figure~\ref{fig:prior_samples} shows representative prior realizations.

\begin{figure}
    \centering
    \includegraphics[width=1.0\linewidth]{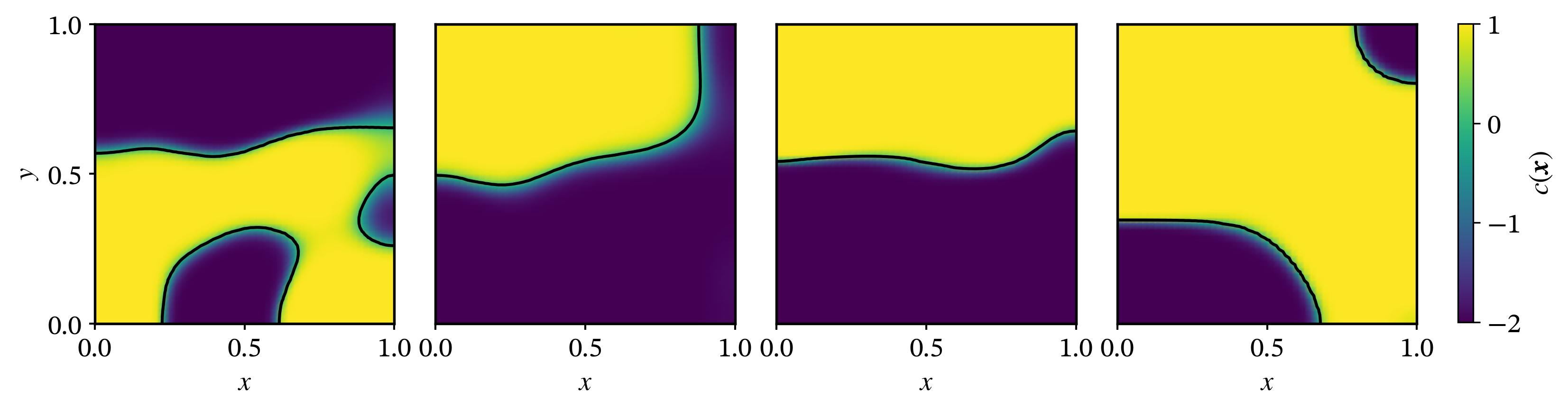}
    \caption{Representative realizations from the pushforward Whittle--Mat\'ern prior for the doping field \(c(\boldsymbol{x})\). Samples are generated on a \(48\times 48\) finite-element mesh using a truncated KL expansion with \(N_{\mathrm{KL}}=30\). The covariance parameters are chosen as \(\tau=\ell^{-2}\) and \(\alpha=\nu+1\), with \(\ell=0.15\) and \(\nu=3\), and the sigmoid steepness parameter is set to \(k=20\). The solid black contour indicates the zero-level set \(c(\boldsymbol{x})=0\), corresponding to a candidate pn-junction interface.}
    \label{fig:prior_samples}
\end{figure}

To generate synthetic data, we consider the noise level parameter $\rho = 0.05$. We compare MAP estimation, NUTS, and preconditioned Crank--Nicolson (pCN). For the pCN sampler, a single chain is run for $300{,}000$ iterations, with the first $30{,}000$ samples discarded as burn-in. For NUTS, the warmup phase consists of $500$ iterations, followed by $1000$ posterior samples.

\subsubsection{Example 1: Straight Junction}
Fig.~\ref{fig:ex1_noise5}(d) shows the MAP estimate. In our experiments, the MAP estimate alone does not provide a reliable characterization of the posterior distribution, as it exhibits strong sensitivity to initialization: different starting points may lead to qualitatively different interface geometries, indicating the presence of multiple local minima. %This nonconvex behavior arises from the nonlinear forward model together with the limited boundary measurements.

To quantify reconstruction quality, we compute the relative \(L^2\)-error,
\begin{equation}
    \mathrm{RE}
    =
    \frac{\|\bar{c} - c^\dagger\|_{L^2}}{\|c^\dagger\|_{L^2}},
\end{equation}
where the \(L^2\)-norm is taken over the computational domain. This gives \(\mathrm{RE}=0.15\). We also evaluate the structural similarity index (SSIM)~\cite{wang2004image}, which measures the agreement of spatial structures, local contrasts, and interface sharpness on a scale from \(0\) (no similarity) to \(1\) (perfect agreement). The reconstruction attains \(\mathrm{SSIM}=0.84\). Together, these metrics indicate the limited accuracy of a single point summary in this setting.

\begin{figure}[htbp]
\centering

% ---------------- Row 1: ----------------
\begin{subfigure}[t]{0.305\textwidth}
    \centering
    \includegraphics[width=\linewidth]{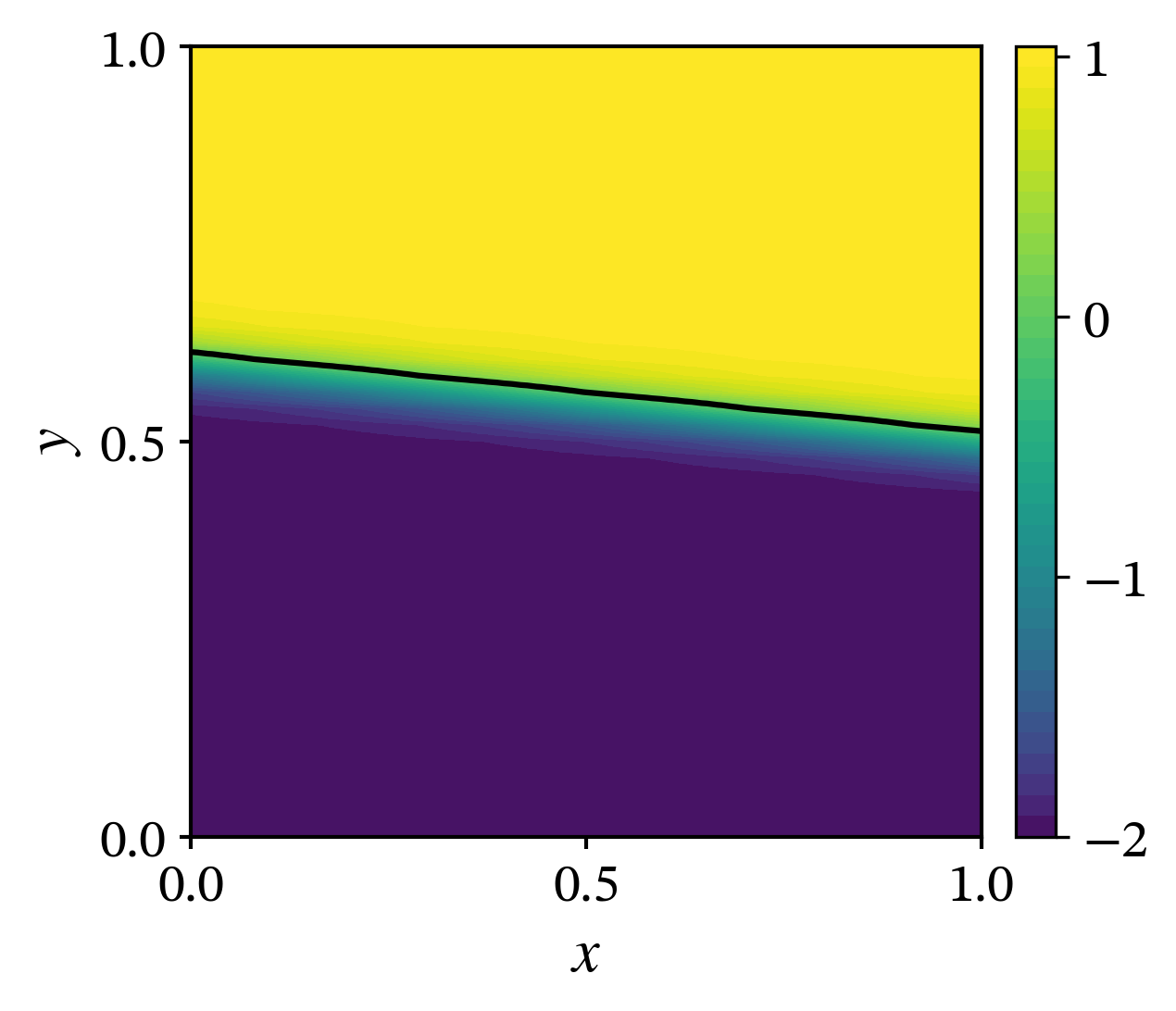}
    \caption{True doping profile}
\end{subfigure}
\hfill
\begin{subfigure}[t]{0.34\textwidth}
    \centering
    \includegraphics[width=\linewidth]{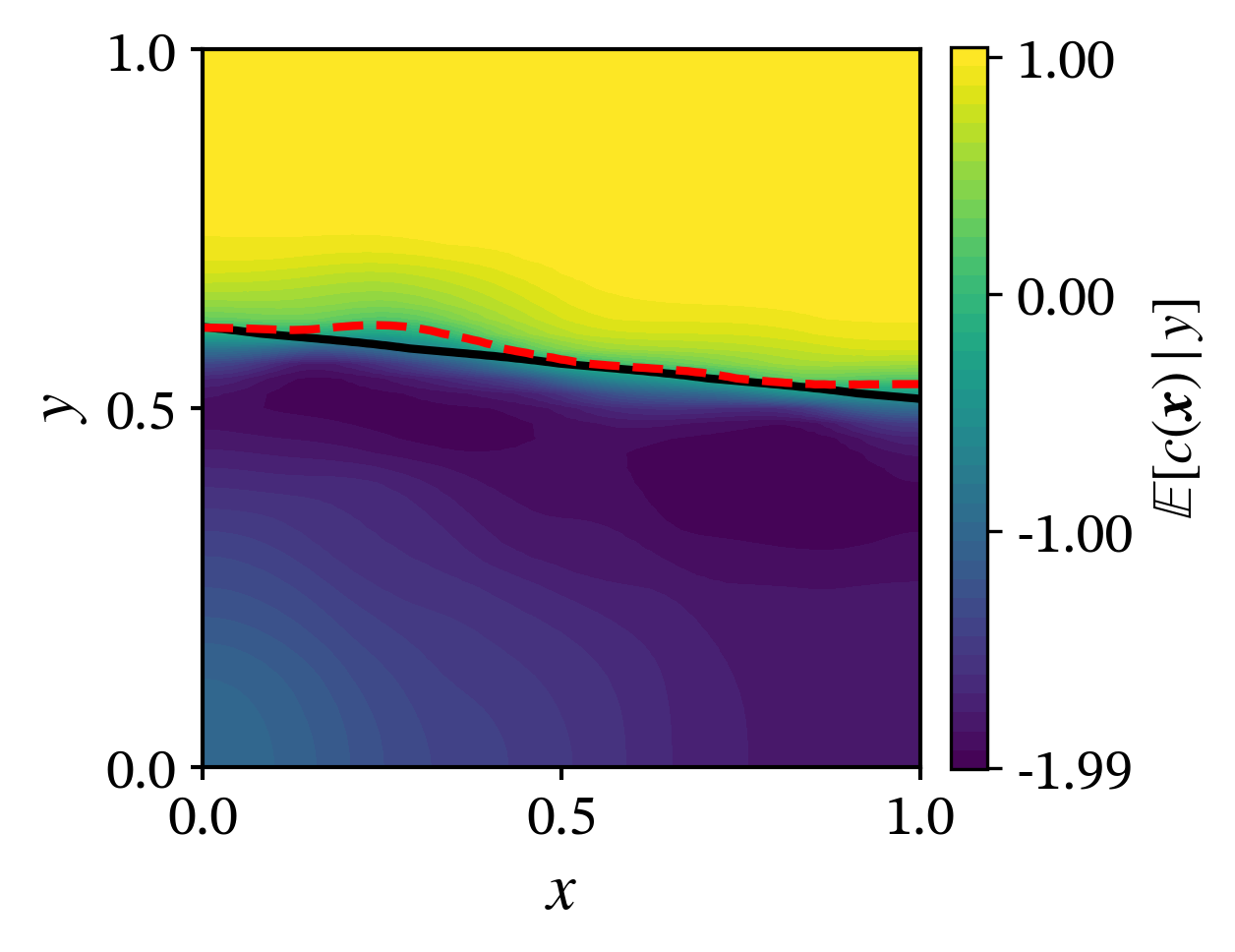}
    \caption{Posterior mean (pCN)}
\end{subfigure}
\hfill
\begin{subfigure}[t]{0.34\textwidth}
    \centering
    \includegraphics[width=\linewidth]{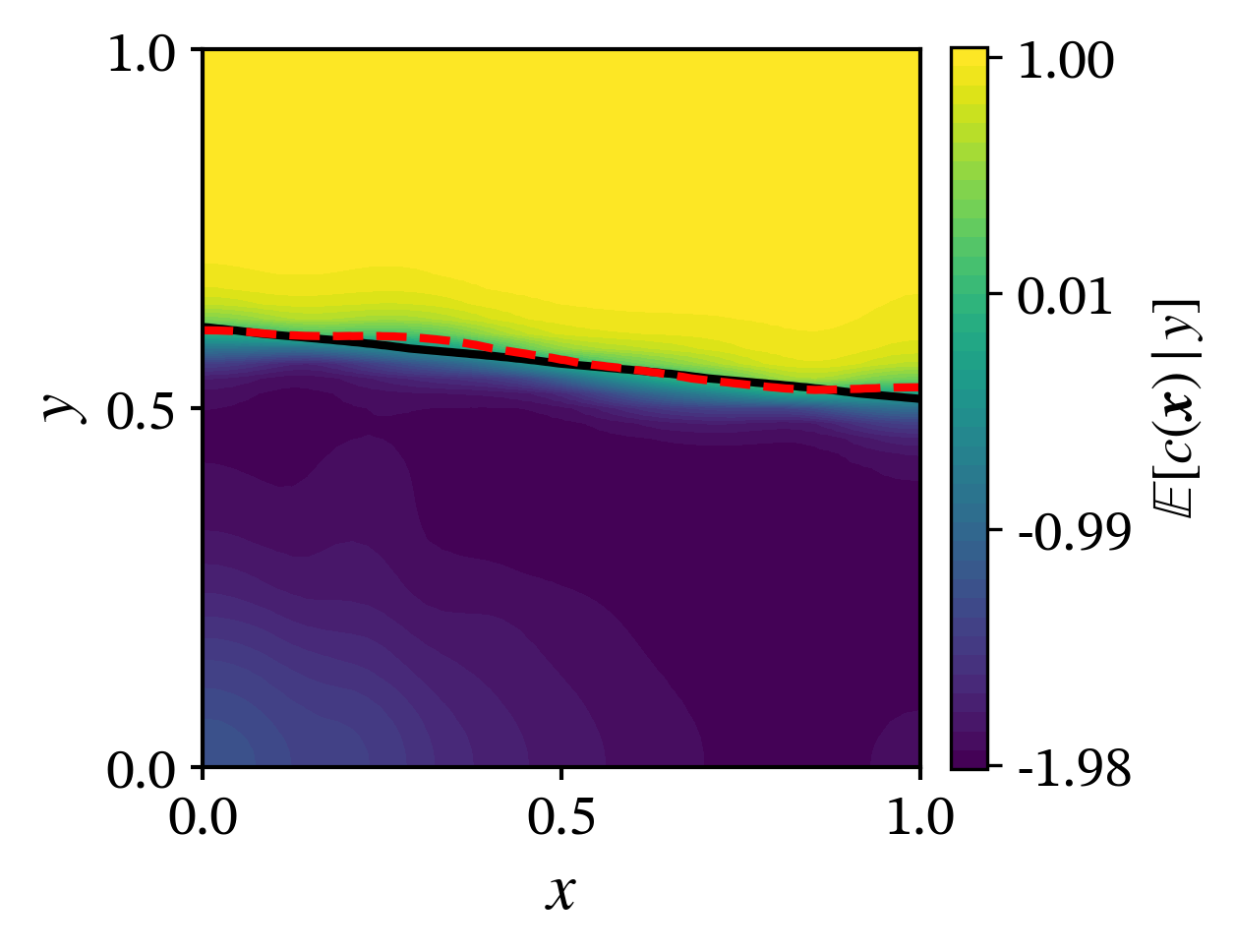}
    \caption{Posterior mean (NUTS)}
\end{subfigure}

\vspace{0.3cm}

% ---------------- Row 2: ----------------
\begin{subfigure}[t]{0.328\textwidth}
    \centering
    \includegraphics[width=\linewidth]{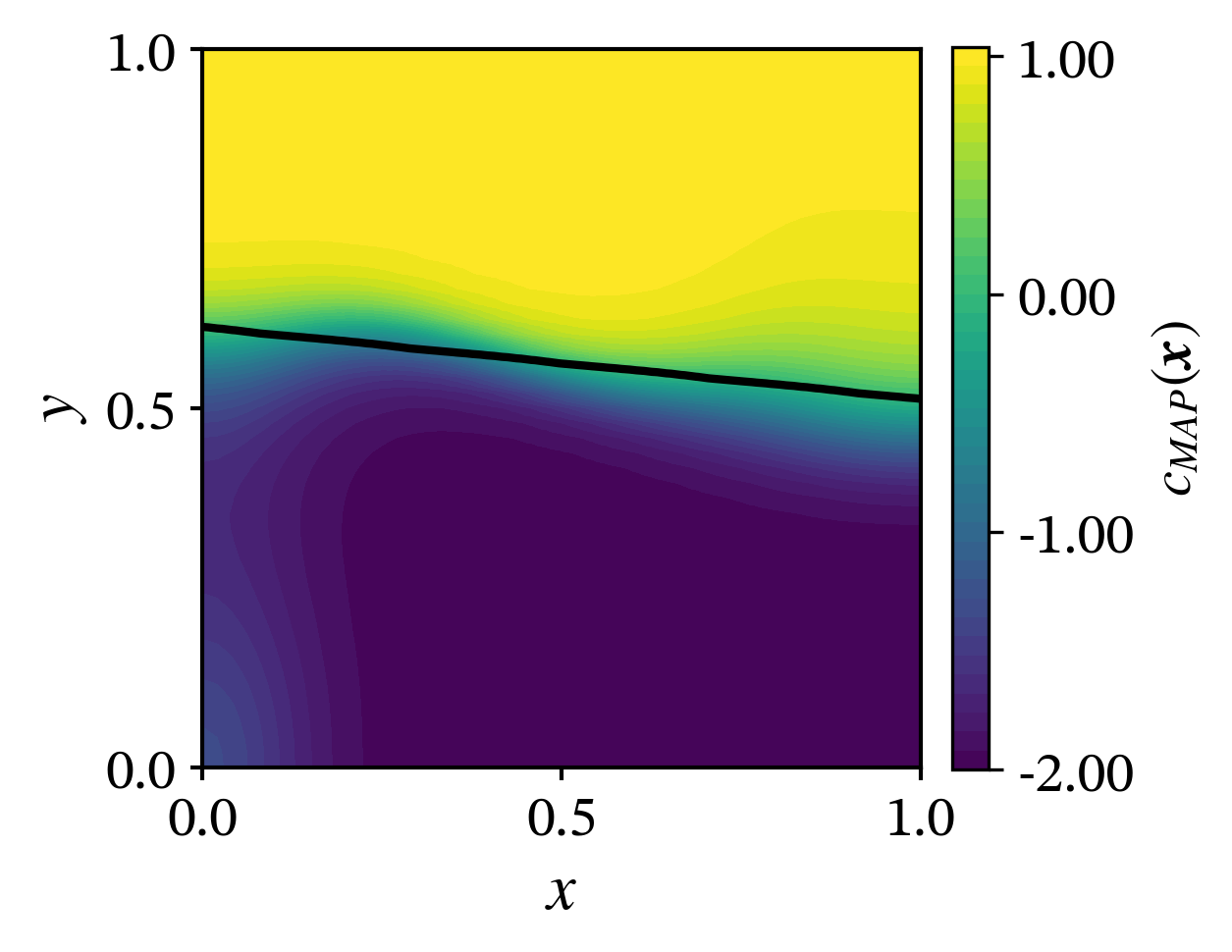}
    \caption{MAP estimate.}
\end{subfigure}
% \begin{subfigure}[t]{0.31\textwidth}
%     \centering
%     \vspace{0pt}
%     \rule{0pt}{0.1\textwidth}
% \end{subfigure}
\hfill
\begin{subfigure}[t]{0.328\textwidth}
    \centering
    \includegraphics[width=\linewidth]{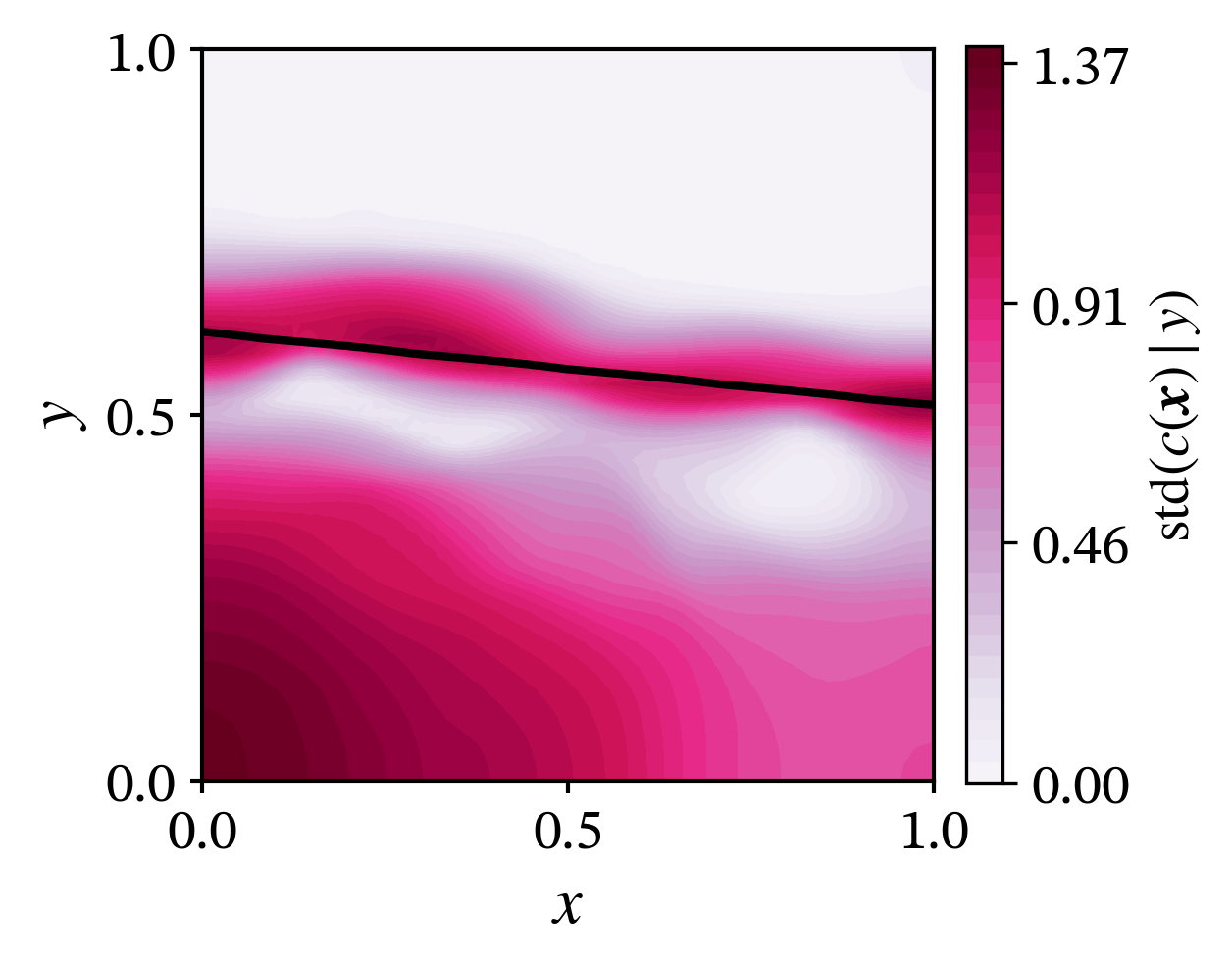}
    \caption{Posterior std (pCN)}
\end{subfigure}
\hfill
\begin{subfigure}[t]{0.328\textwidth}
    \centering
    \includegraphics[width=\linewidth]{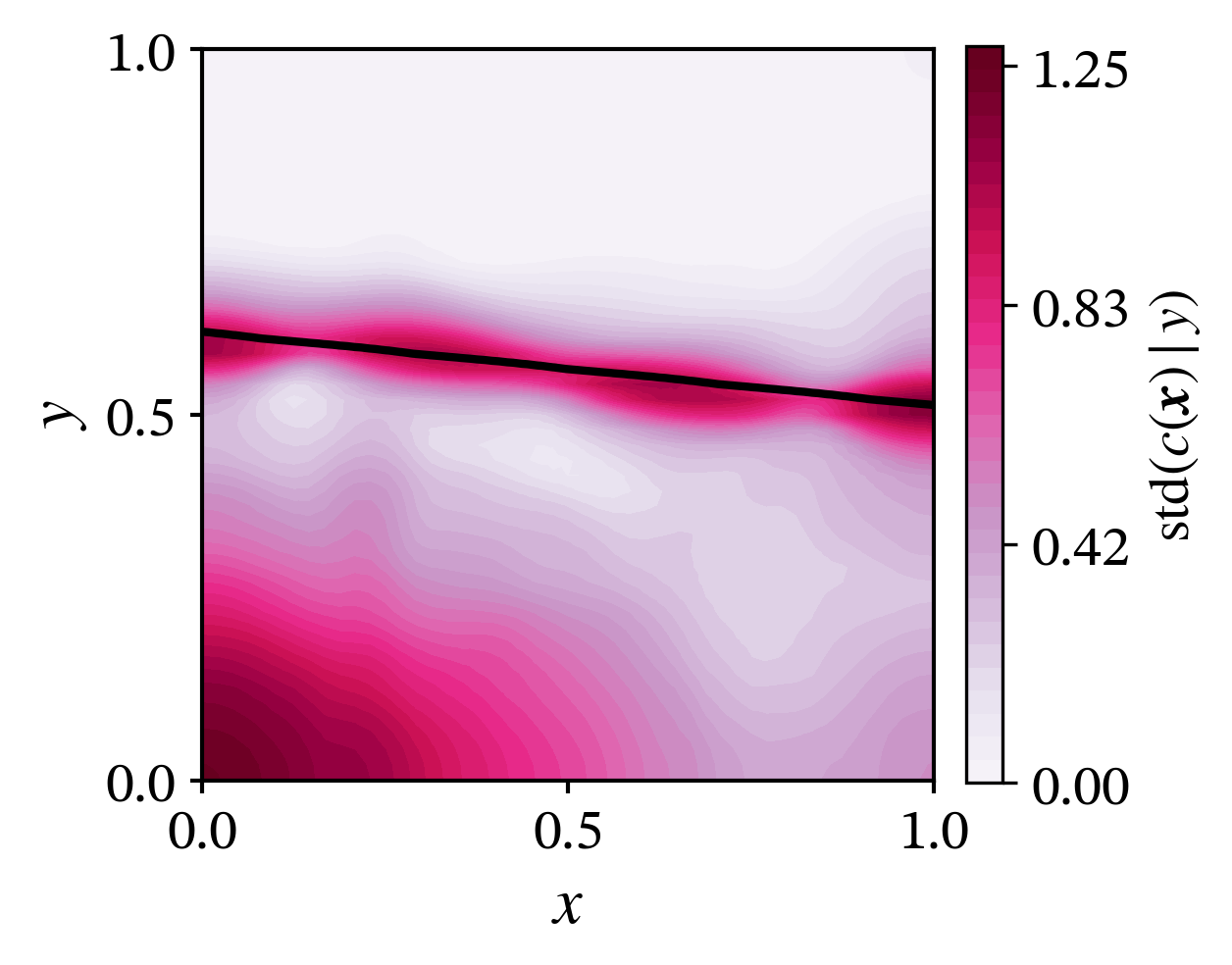}
    \caption{Posterior std (NUTS)}
\end{subfigure}

\caption{Reconstruction results for the known-plateau setting with a straight interface (Example~1). (a) True doping profile, (b) posterior mean obtained using pCN, (c) posterior mean obtained using NUTS, (d) MAP estimate, (e) posterior standard deviation (std) obtained using pCN, and (f) posterior standard deviation obtained using NUTS. The black solid line indicates the true pn-junction interface, while the red dashed line indicates the inferred interface.}
\label{fig:ex1_noise5}
\end{figure}

Figs.~\ref{fig:ex1_noise5}(b) and~\ref{fig:ex1_noise5}(e) show the posterior mean and standard deviation (std) obtained using pCN. The posterior mean provides a reasonable reconstruction, with \(\mathrm{RE}=0.20\) and \(\mathrm{SSIM}=0.72\). As expected, the uncertainty is higher around the interface and in regions farther from the measurement boundary. The total runtime is approximately \(1.33\) hours. However, autocorrelation analysis reveals severely degraded mixing across the inferred modes. The Markov chain exhibits extremely slow exploration of the posterior distribution, with a mean effective sample size (ESS) \cite{mcbook} of \(291\) and normalized efficiency \(\mathrm{ESS}/N_s \approx 10^{-3}\), where \(N_s\) denotes the number of post-burn-in samples. The trace plots in Fig.~\ref{fig:Trace_Scen_1}(a) further show that the chain evolves through small, highly correlated moves and remains confined to local regions of the parameter space. These diagnostics indicate that, for the present posterior and tuning, pCN provides statistically inefficient exploration despite producing a visually reasonable posterior mean.

Figs.~\ref{fig:ex1_noise5}(c) and~\ref{fig:ex1_noise5}(f) show the posterior mean and standard deviation obtained using NUTS. The posterior mean accurately recovers the planar interface, while the posterior standard deviation is sharply localized in a narrow band around the junction and increases toward the lower part of the domain, consistent with the reduced sensitivity of the measurements to regions farther from the boundary. This spatially structured uncertainty indicates that the data strongly constrain the interface location while leaving regions farther from the measurements less informed. The posterior mean achieves \(\mathrm{RE}=0.11\) and \(\mathrm{SSIM}=0.89\). The total runtime is approximately \(4.50\) hours. The mean ESS is \(729\), corresponding to \(\mathrm{ESS}/N_s \approx 0.73\). This efficiency is more than two orders of magnitude higher than that of pCN. The trace plots in Fig.~\ref{fig:Trace_Scen_1}(b) show well-mixed chains without visible stagnation, indicating efficient exploration of the posterior distribution.

The main numerical observations are summarized in Table~\ref{tab:ex1_sampler_comparison}. Taken together, these results show that the posterior mean and standard deviation fields obtained by pCN and NUTS are visually similar in Example~1, but the sampling diagnostics reveal a substantial difference in statistical reliability. NUTS provides accurate interface reconstruction together with efficient posterior exploration, as indicated by its substantially higher normalized efficiency, \(\mathrm{ESS}/N_s\), and well-mixed trace plots. In contrast, although pCN produces similar-looking posterior summaries, its poor mixing and extremely low effective sample size indicate less effective exploration of the posterior distribution. Consequently, the uncertainty estimates obtained from pCN should be interpreted with caution and are less statistically reliable than those obtained with NUTS. Furthermore, the results for Example~1 show that the MAP estimate is highly sensitive to initialization in our implementation. For this reason, MAP estimates are not considered further in Example~2.

\begin{figure}[!t]
\centering

\begin{subfigure}[t]{0.48\textwidth}
    \centering
    \includegraphics[width=\linewidth]{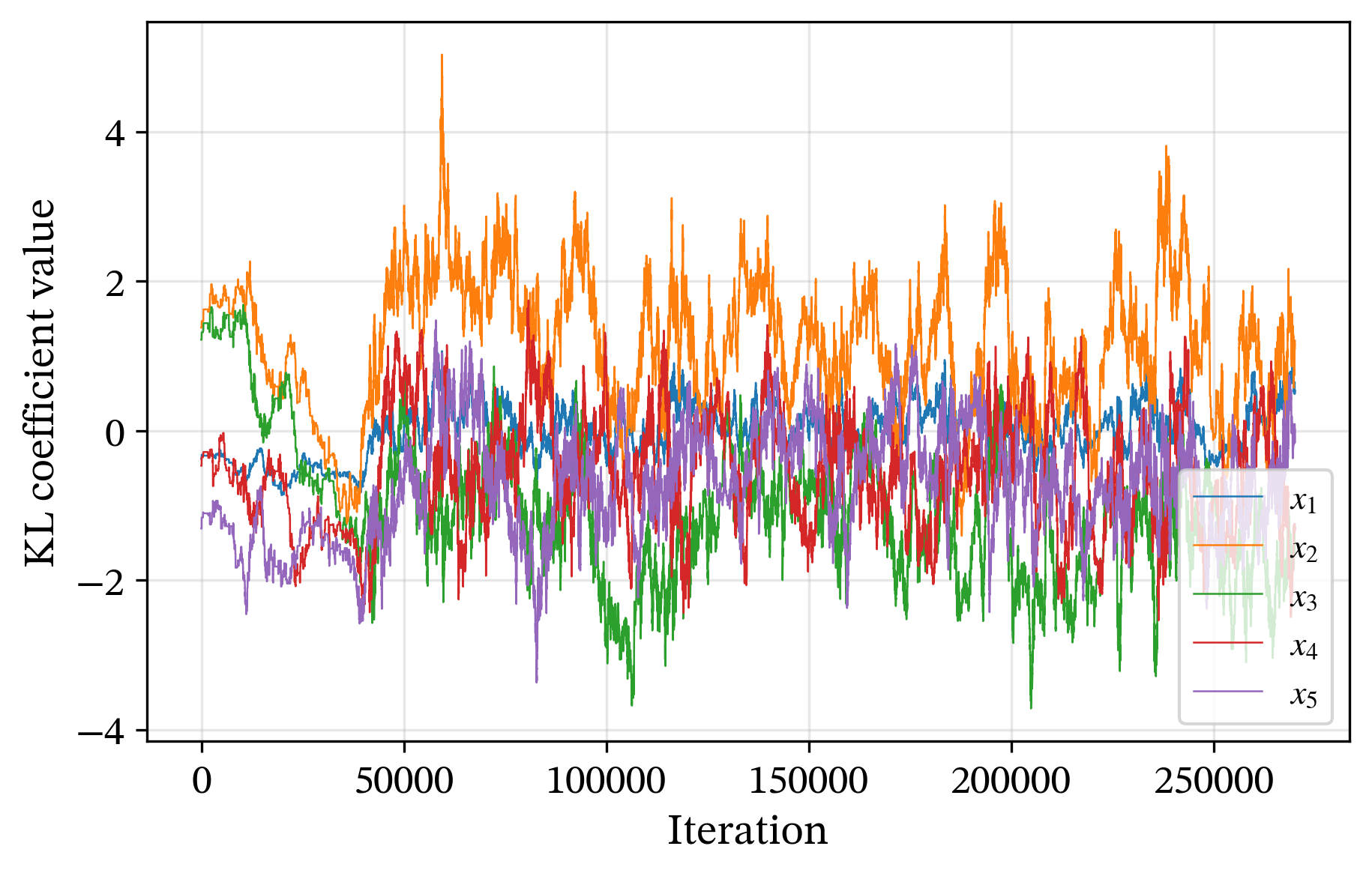}
    \caption{Example~1 (pCN)}
\end{subfigure}
\hfill
\begin{subfigure}[t]{0.48\textwidth}
    \centering
    \includegraphics[width=\linewidth]{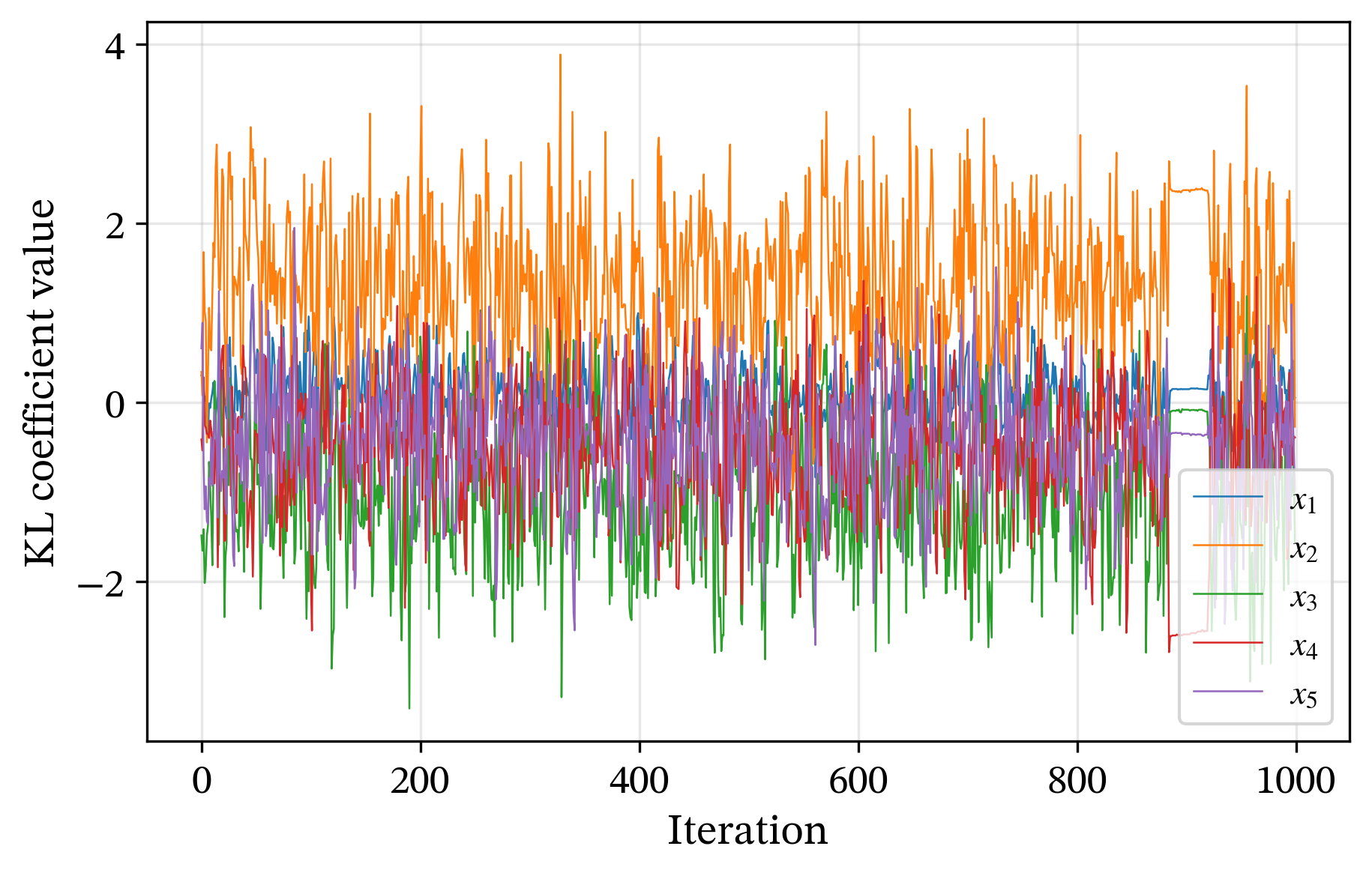}
    \caption{Example~1 (NUTS)}
\end{subfigure}

\vspace{1em}
\begin{subfigure}[t]{0.48\textwidth}
    \centering
    \includegraphics[width=\linewidth]{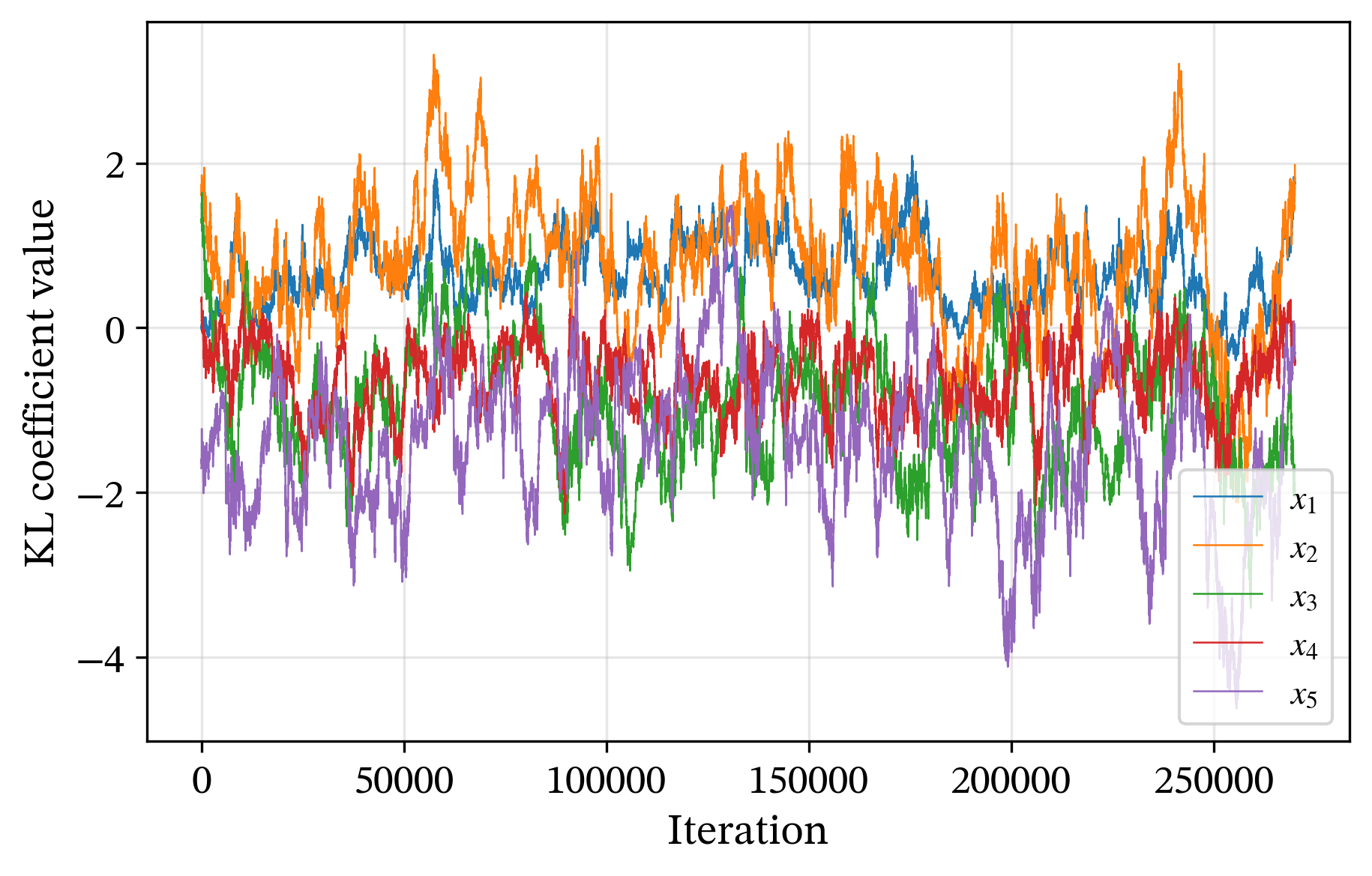}
    \caption{Example~2 (pCN)}
\end{subfigure}
\hfill
\begin{subfigure}[t]{0.48\textwidth}
    \centering
    \includegraphics[width=\linewidth]{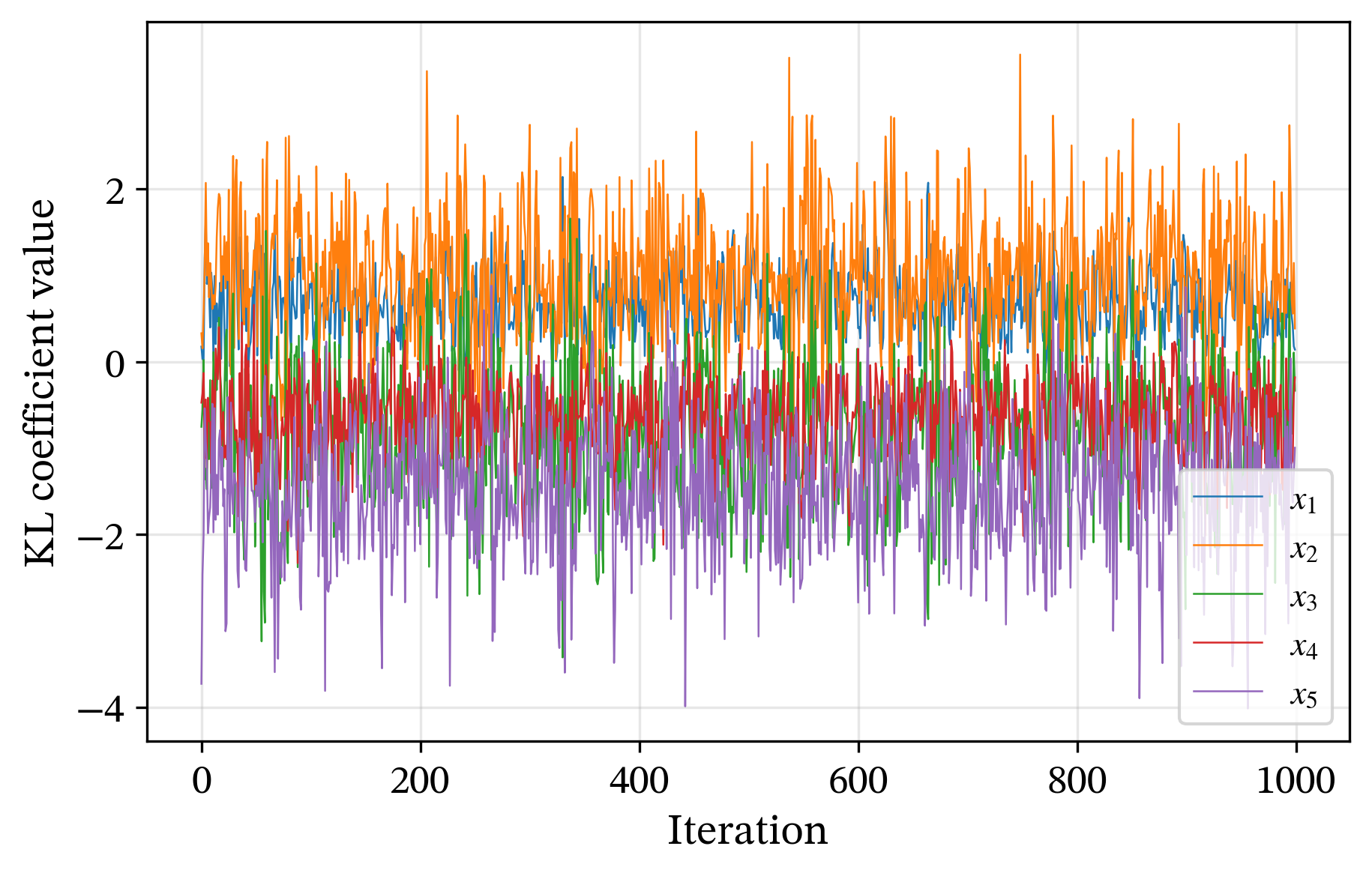}
    \caption{Example~2 (NUTS)}
\end{subfigure}

\caption{Trace plots of five representative KL coefficients for the reconstruction problem with known plateau values. First row: Example~1 using (a) pCN and (b) NUTS. Second row: Example~2 using (c) pCN and (d) NUTS. In both examples, the NUTS chains show faster mixing and better exploration of the posterior distribution, while the pCN chains exhibit strong autocorrelation and slow mixing, with long periods of persistence in localized regions of the parameter space.}
\label{fig:Trace_Scen_1}
\end{figure}

\begin{table}[t]
\centering
\caption{Comparison of pCN and NUTS for Example~1 with known plateaus.}
\label{tab:ex1_sampler_comparison}
\begin{tabular}{lcccccc}
\hline
Method & RE & SSIM & Runtime (h) & Mean ESS & \(\mathrm{ESS}/N_s\) & Mixing behaviour \\
\hline
pCN  & 0.20 & 0.72 & 1.33 & 291 & \(\approx 10^{-3}\) & highly correlated \\
NUTS & 0.11 & 0.89 & 4.50  & 729 & \(\approx 0.73\)     & well mixed \\
\hline
\end{tabular}
\end{table}

\subsubsection{Example 2: Curved Junction}

Compared to the planar case, the curved junction represents a geometrically more complex doping interface, closer to shapes that may arise in practical diode fabrication. Since the measurements are restricted to boundary fluxes, resolving spatial variations in the junction location is expected to be more challenging.

\begin{figure}[!htbp]
\centering
% ---------------- Row 1: ----------------
\begin{subfigure}[t]{0.305\textwidth}
    \centering
    \includegraphics[width=\linewidth]{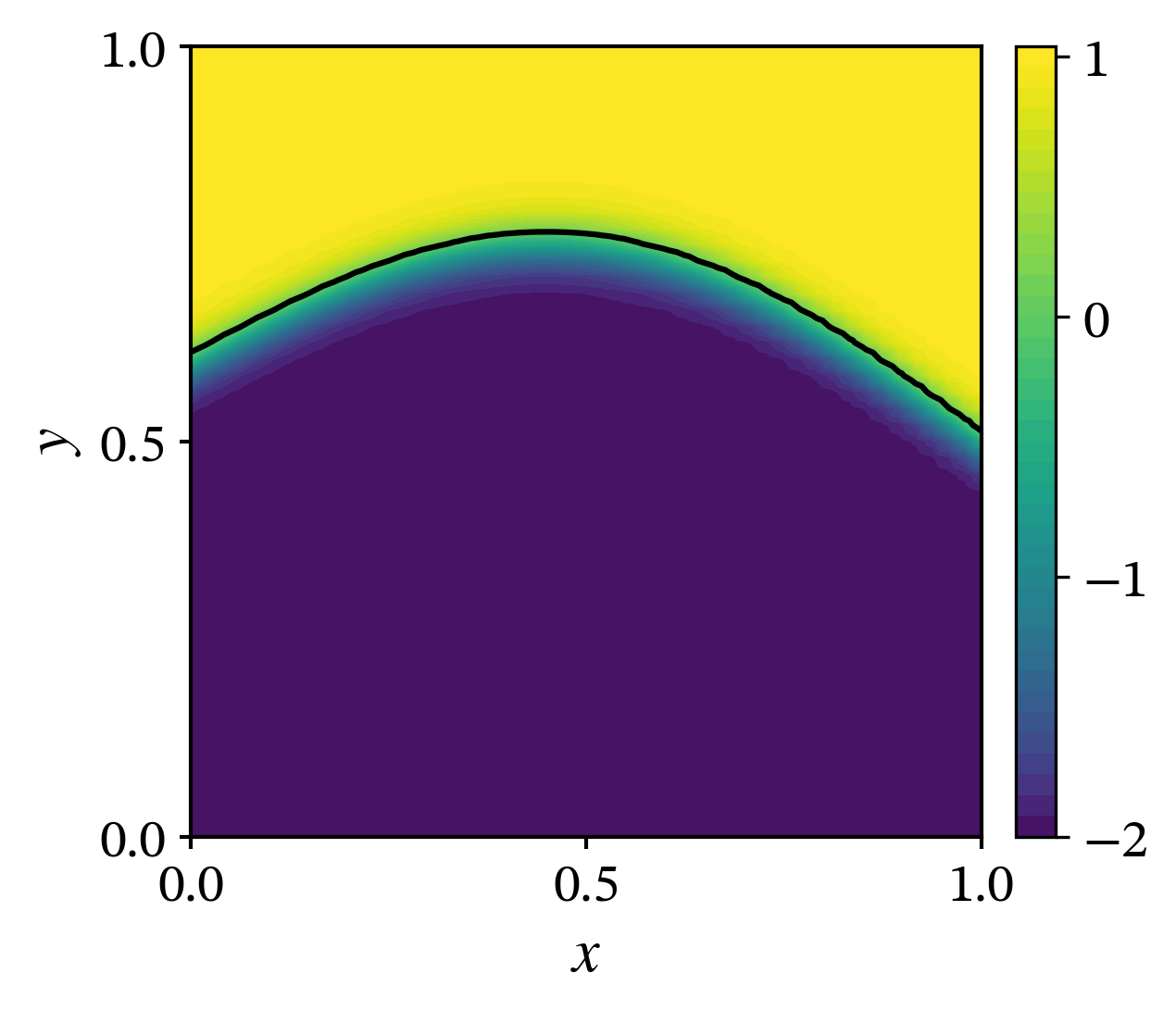}
    \caption{True doping profile}
\end{subfigure}
\hfill
\begin{subfigure}[t]{0.34\textwidth}
    \centering
    \includegraphics[width=\linewidth]{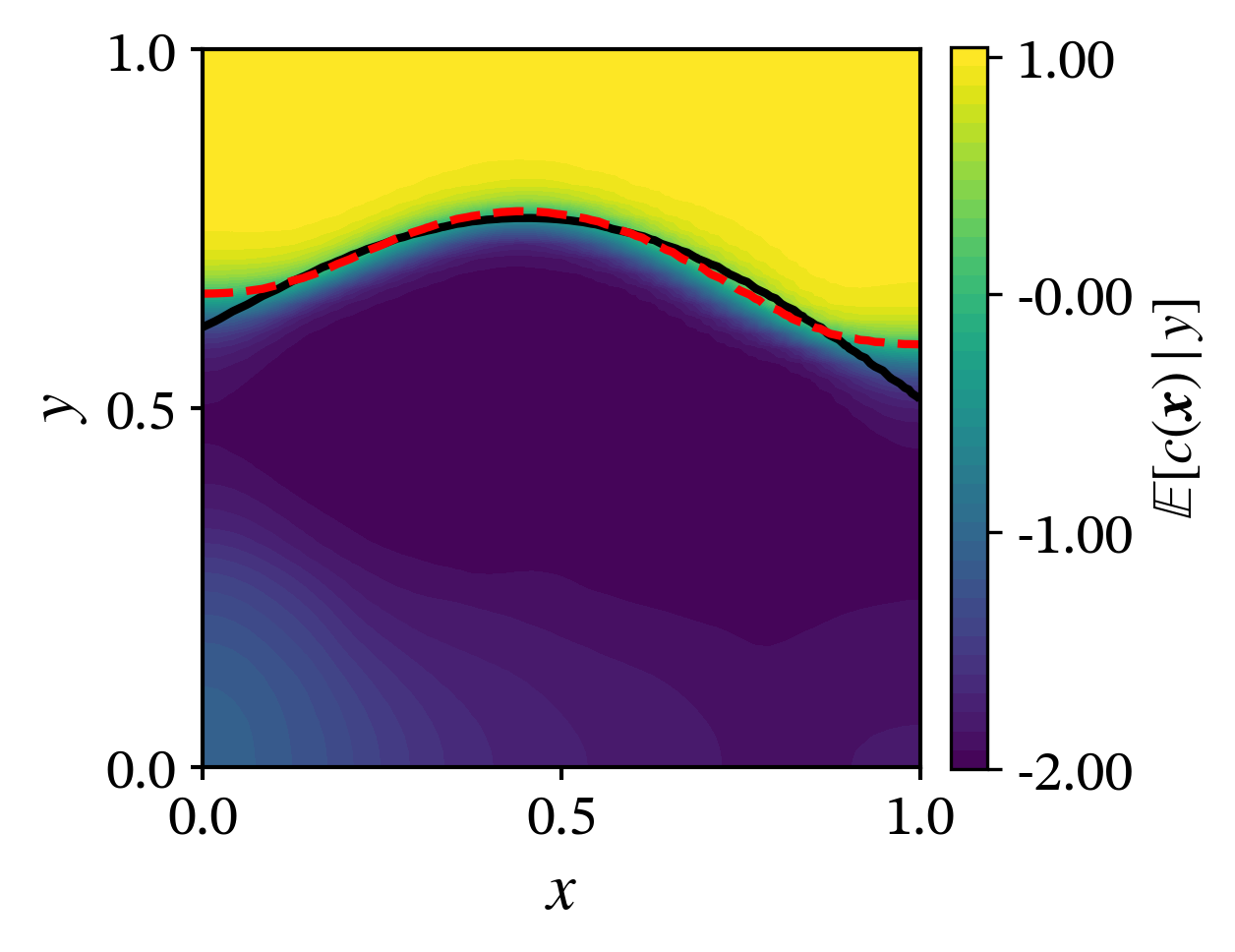}
    \caption{Posterior mean (pCN)}
\end{subfigure}
\hfill
\begin{subfigure}[t]{0.34\textwidth}
    \centering
    \includegraphics[width=\linewidth]{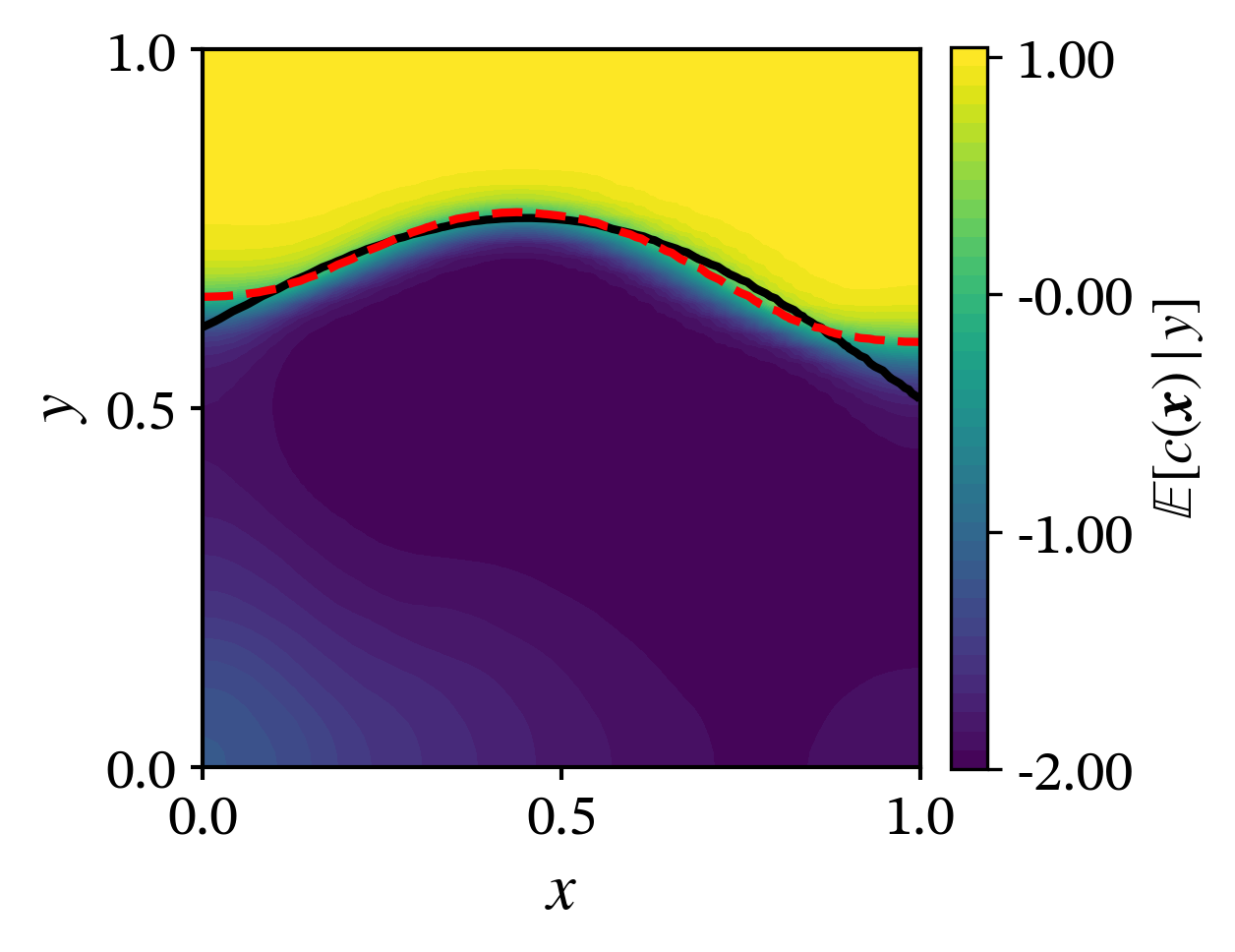}
    \caption{Posterior mean (NUTS)}
\end{subfigure}

\vspace{0.3cm}

% ---------------- Row 2: ----------------
\begin{subfigure}[t]{0.305\textwidth}
    \centering
    \vspace{0pt}
    \rule{0pt}{0.1\textwidth}
\end{subfigure}
\hfill
\begin{subfigure}[t]{0.34\textwidth}
    \centering
    \includegraphics[width=\linewidth]{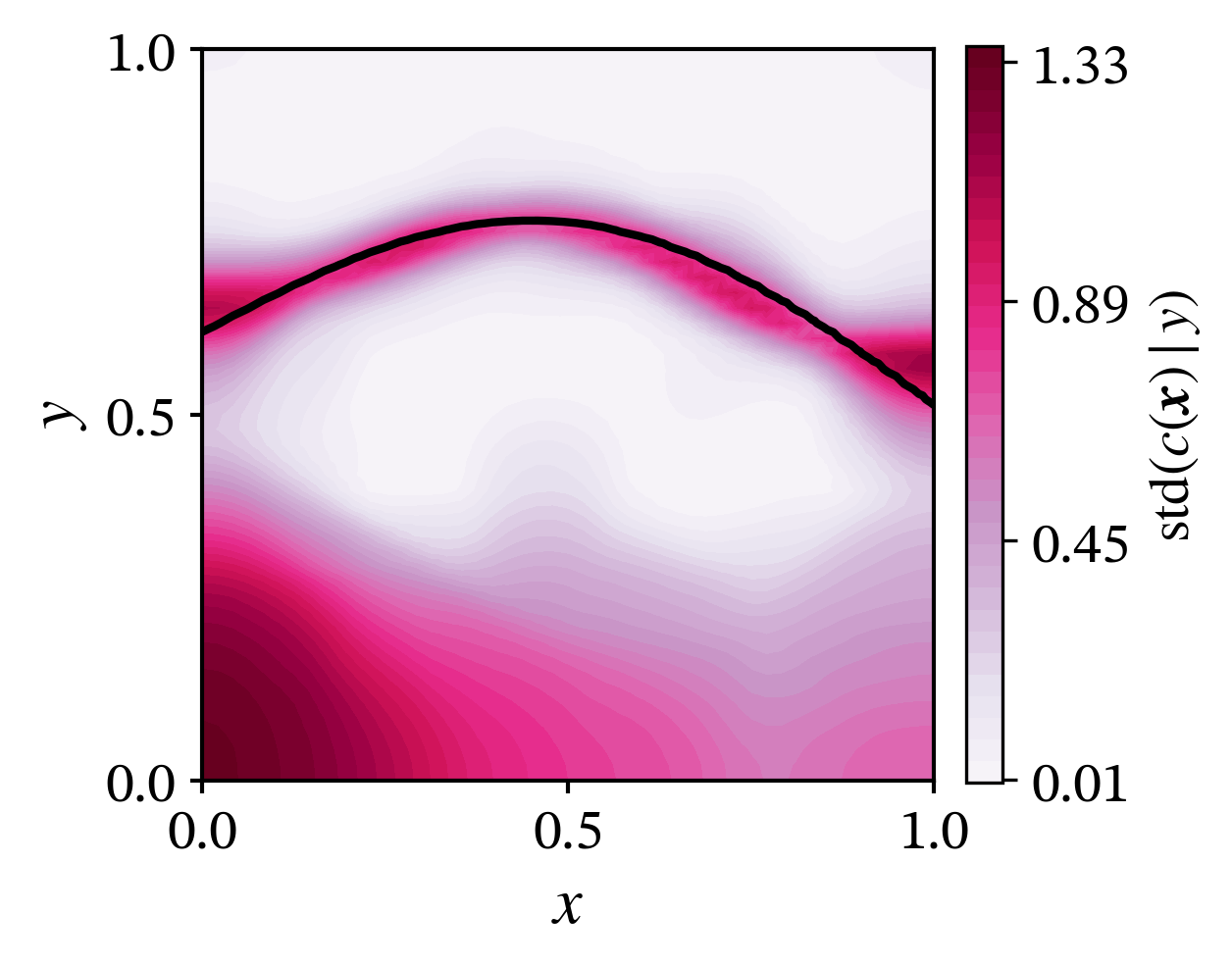}
    \caption{Posterior std (pCN)}
\end{subfigure}
\hfill
\begin{subfigure}[t]{0.34\textwidth}
    \centering
    \includegraphics[width=\linewidth]{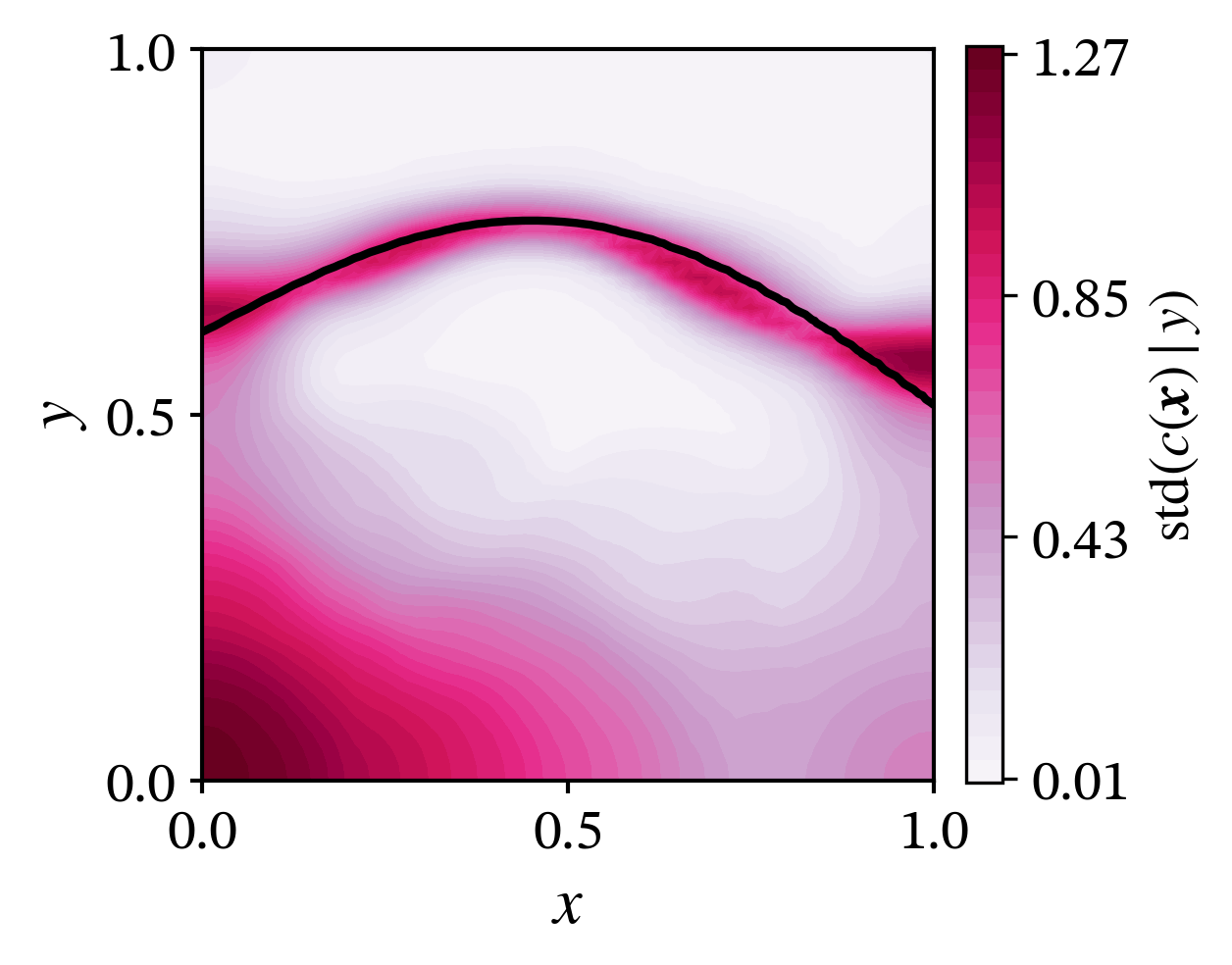}
    \caption{Posterior std (NUTS)}
\end{subfigure}
\caption{Reconstruction results for the known-plateau setting with a curved interface (Example~2). (a) True doping profile, (b) posterior mean obtained using pCN, (c) posterior mean obtained using NUTS, (d) posterior standard deviation (std) obtained using pCN, and (e) posterior standard deviation obtained using NUTS. The black solid line indicates the true pn-junction interface, while the red dashed line indicates the inferred interface.}
\label{fig:ex2_noise5}
\end{figure}

Figs.~\ref{fig:ex2_noise5}(b) and \ref{fig:ex2_noise5}(d) show the posterior mean and standard deviation obtained using pCN. The posterior mean recovers the curved interface, with only mild smoothing effects near the ends of the curve. The posterior standard deviation exhibits a clear spatial structure, with uncertainty concentrated around the interface region and in regions farther from the measurement boundary. This reflects both ambiguity in the junction location and reduced sensitivity of the measurements to distant parts of the domain. The posterior mean yields \(\mathrm{RE}=0.15\) and \(\mathrm{SSIM}=0.81\). The total runtime is approximately \(1.33\) hours. However, the sampler exhibits poor mixing, with mean \(\mathrm{ESS}\approx 288\) and normalized efficiency \(\mathrm{ESS}/N_s \approx 10^{-3}\). The trace plots in Fig.~\ref{fig:Trace_Scen_1}(c) show highly correlated chains confined to local regions of the parameter space. These diagnostics indicate that, although pCN produces visually reasonable posterior summaries, its uncertainty estimates should be interpreted with caution.

Figs.~\ref{fig:ex2_noise5}(c) and \ref{fig:ex2_noise5}(e) show the posterior mean and standard deviation obtained using NUTS. Similar to pCN, the posterior mean recovers the curved interface, with only mild smoothing effects near the ends of the curve. The posterior standard deviation also exhibits a clear spatial structure, with uncertainty concentrated around the interface region and in regions farther from the measurement boundary. Quantitatively, the reconstruction achieves $\mathrm{RE}=0.14$ and $\mathrm{SSIM}=0.84$, representing a slight improvement over pCN. The total runtime is approximately $5.49$ hours. The mean ESS is $978$, corresponding to excellent $\mathrm{ESS}/N_s \approx 0.98$. This represents a substantial increase in sampling efficiency compared to pCN, even in this more challenging setting. The trace plots in Fig.~\ref{fig:Trace_Scen_1}(d) show well-mixed chains without visible stagnation, indicating efficient posterior exploration.

The main numerical observations for Example~2 are summarized in Table~\ref{tab:ex2_sampler_comparison}. Based on the results of the known-plateau setting, we therefore do not consider pCN in the joint reconstruction experiments. Its poor mixing and extremely low effective sample size indicate that, for the present posterior and tuning, pCN is not sufficiently reliable for posterior exploration in this problem.

\begin{table}[t]
\centering
\caption{Comparison of pCN and NUTS for Example~2 with known plateaus.}
\label{tab:ex2_sampler_comparison}
\begin{tabular}{lcccccc}
\hline
Method & RE & SSIM & Runtime (h) & Mean ESS & \(\mathrm{ESS}/N_s\) & Mixing behaviour \\
\hline
pCN  & 0.15 & 0.81 & 1.33 & 288 & \(\approx 10^{-3}\) & highly correlated \\
NUTS & 0.14 & 0.84 & 5.49 & 978 & \(\approx 0.98\) & well mixed \\
\hline
\end{tabular}
\end{table}

%--------------------------------------------------------------
\subsection{Joint Reconstruction}
\label{subsec:joint_reconstruction}
% ---------------------------------------------------------------
We now examine the joint reconstruction of both the interface geometry and the plateau values for the straight junction geometry shown in Fig.~\ref{fig:ex1_noise5}(a). This setting is more challenging than the known-plateau case, since the plateau parameters affect both the interior doping field and the Dirichlet boundary conditions, thereby introducing additional nonlinearity and posterior coupling. The prior parameters are chosen as in Section~\ref{subsec:scenario_A}. Synthetic data are generated with a relative noise level \(\rho=0.05\). Posterior sampling is performed using NUTS with \(500\) warmup iterations followed by \(1000\) posterior samples. The plateau values are assigned independent uniform priors on prescribed intervals,
$c_{\mathrm p} \sim \mathcal U(-2.5,-1.5)$ and $c_{\mathrm n} \sim \mathcal U(0.5,1.5)$.

Figs.~\ref{fig:posterior_field}(a)--(b) show the posterior mean and standard deviation of the reconstructed doping field. The posterior mean recovers the overall structure of the true field, but the inferred interface, shown by the red dashed line, is shifted downward relative to the true interface, shown by the black solid line. The posterior standard deviation is concentrated around the inferred interface, indicating that the dominant uncertainty is associated with the interface location. Away from the interface and closer to the measurement boundary, the uncertainty remains low, reflecting better constrained regions. The reconstruction achieves \(\mathrm{RE}=0.40\) and \(\mathrm{SSIM}=0.56\), illustrating the increased difficulty of the joint inference problem.

Figs.~\ref{fig:posterior_field}(c)--(d) show the marginal posterior distributions of \(c_{\mathrm p}\) and \(c_{\mathrm n}\). Both distributions exhibit skewness and long tails toward the admissible bounds. In particular, \(c_{\mathrm p}\) shifts toward more negative values, with posterior mean and standard deviation \(-2.15 \pm 0.25\), while \(c_{\mathrm n}\) shifts toward larger values, with posterior mean and standard deviation \(1.17 \pm 0.25\). These results indicate that the data do not strongly constrain the plateau magnitudes independently.

The downward shift of the inferred interface appears to be linked to bias in the inferred plateau values. Increasing \(c_{\mathrm n}\) or decreasing \(c_{\mathrm p}\) increases the contrast in the doping profile and affects the boundary flux. To remain consistent with the observed data, the model can partially compensate for these changes by shifting the interface downward. This suggests a non-identifiability mechanism between the interface position and the plateau values. To quantify this effect, we define the interface depth \(d\) as the vertical coordinate of the zero-level set at \(x=0.5\). The posterior samples yield
\[
    \mathrm{corr}(c_{\mathrm p},d)=0.43,
    \qquad
    \mathrm{corr}(c_{\mathrm n},d)=-0.52.
\]
These moderate correlations support the presence of a compensation mechanism between the plateau values and the inferred interface location: variations in the plateau magnitudes can be partially offset by shifts in the interface depth, leading to a family of near-equivalent explanations of the observed boundary flux data.

The main numerical observations for the joint reconstruction experiment are summarized in Table~\ref{tab:joint_reconstruction_summary}. The total runtime is approximately \(10\) hours. The mean effective sample size for the KL coefficients \(\boldsymbol{z}\) is \(862\), corresponding to \(\mathrm{ESS}/N_s=0.86\). The effective sample sizes for the plateau parameters are \(493\) for \(c_{\mathrm p}\) and \(368\) for \(c_{\mathrm n}\). These diagnostics indicate that the sampler remains effective in exploring the posterior in this joint reconstruction setting. The trace plots in Fig.~\ref{fig:joint_trace} show well-mixed chains, indicating that the discrepancy between the posterior mean and the ground truth is not primarily due to sampling inefficiency, but rather to structural non-identifiability. This may also be associated with a more complex posterior structure, possibly including multiple modes, as suggested by the sensitivity of the MAP estimates to initialization in the previous section. Overall, the boundary flux data constrain a combination of plateau values and interface geometry, leading to a family of near-equivalent solutions.

\begin{figure}[!t]
\centering
\begin{subfigure}[t]{0.49\textwidth}
    \centering
    \includegraphics[width=\linewidth]{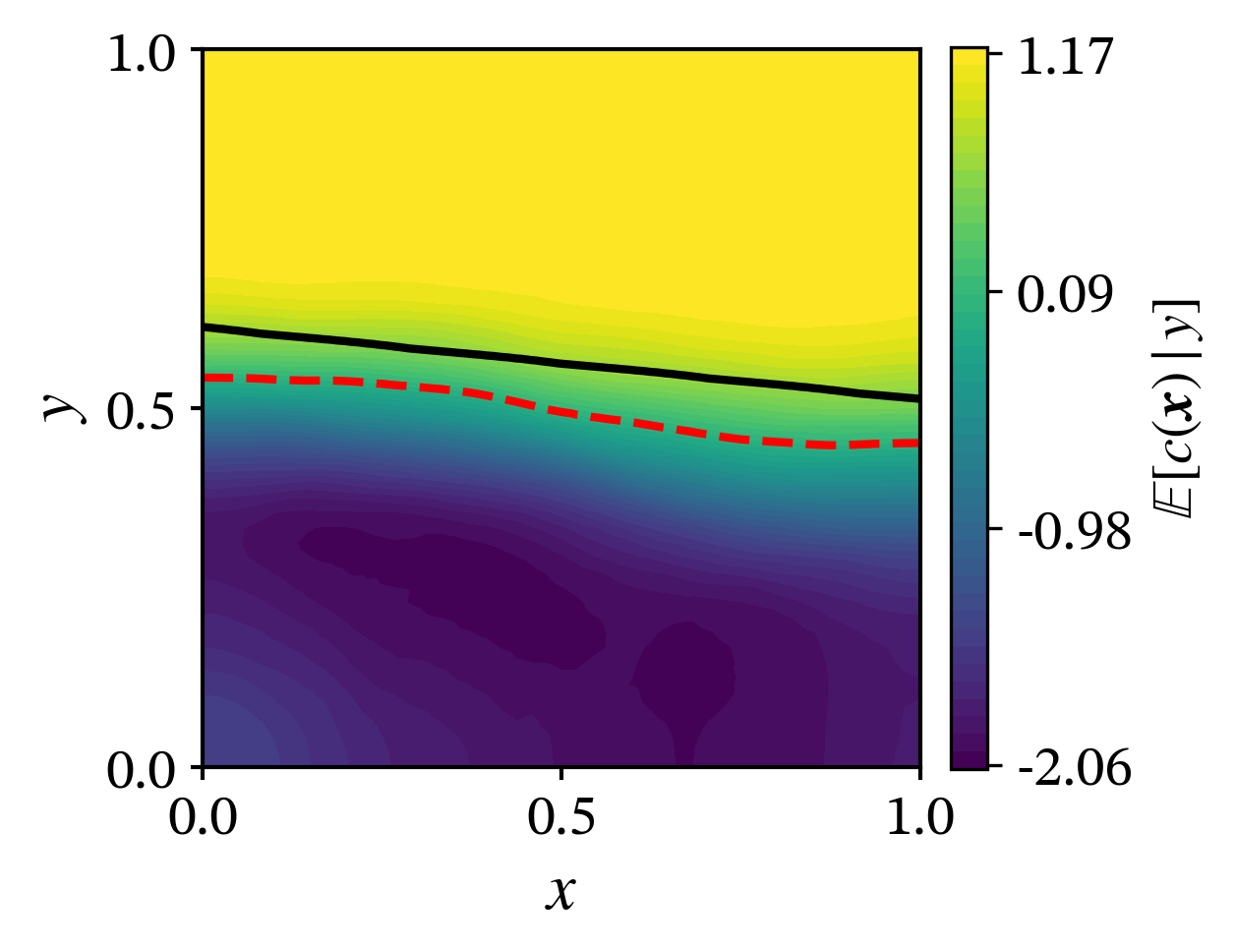}
    \caption{Posterior mean}
\end{subfigure}
\hfill
\begin{subfigure}[t]{0.49\textwidth}
    \centering
    \includegraphics[width=\linewidth]{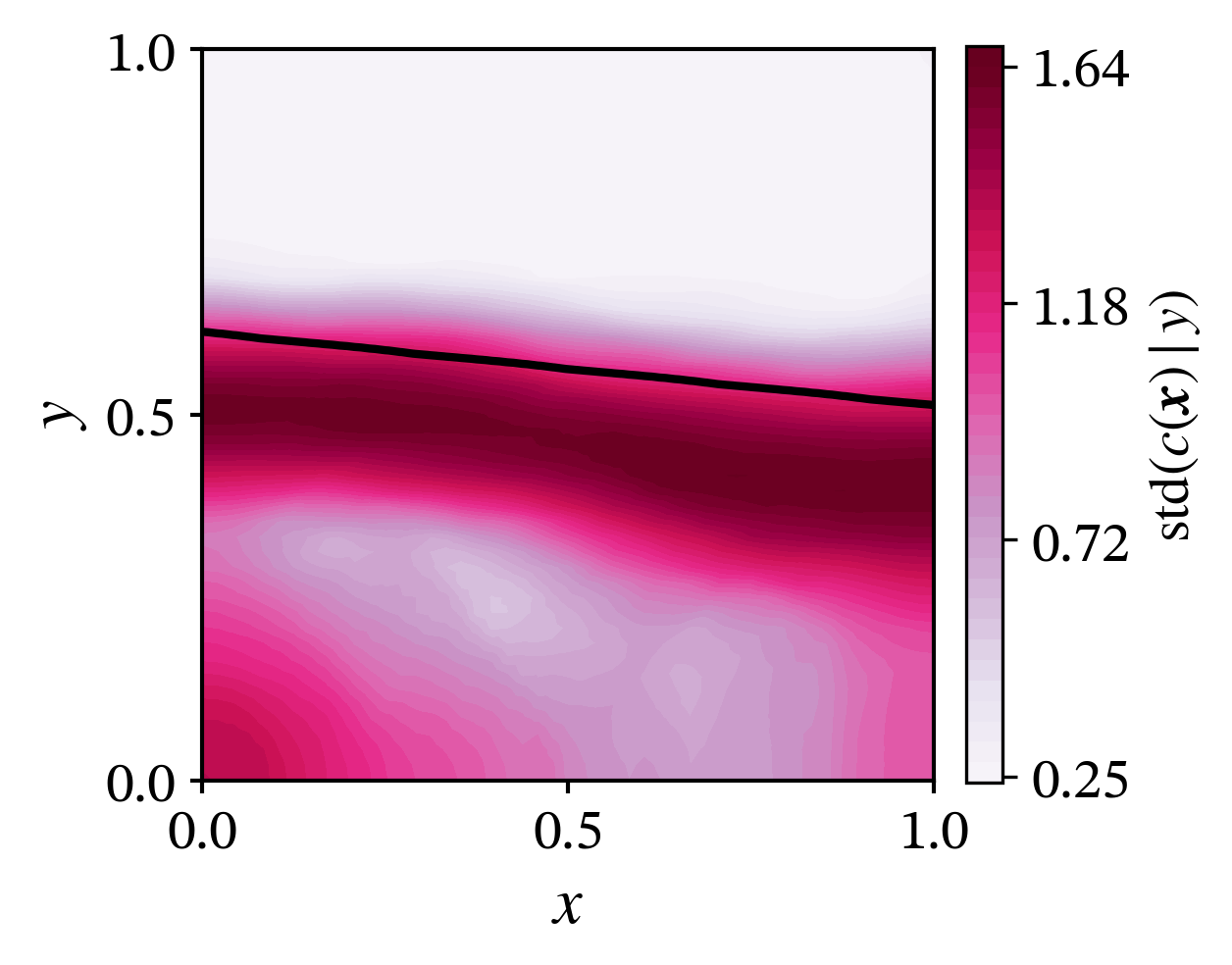}
    \caption{Posterior std}
\end{subfigure}

\vspace{5pt}

\begin{subfigure}[t]{0.45\textwidth}
    \centering
    \includegraphics[width=\linewidth]{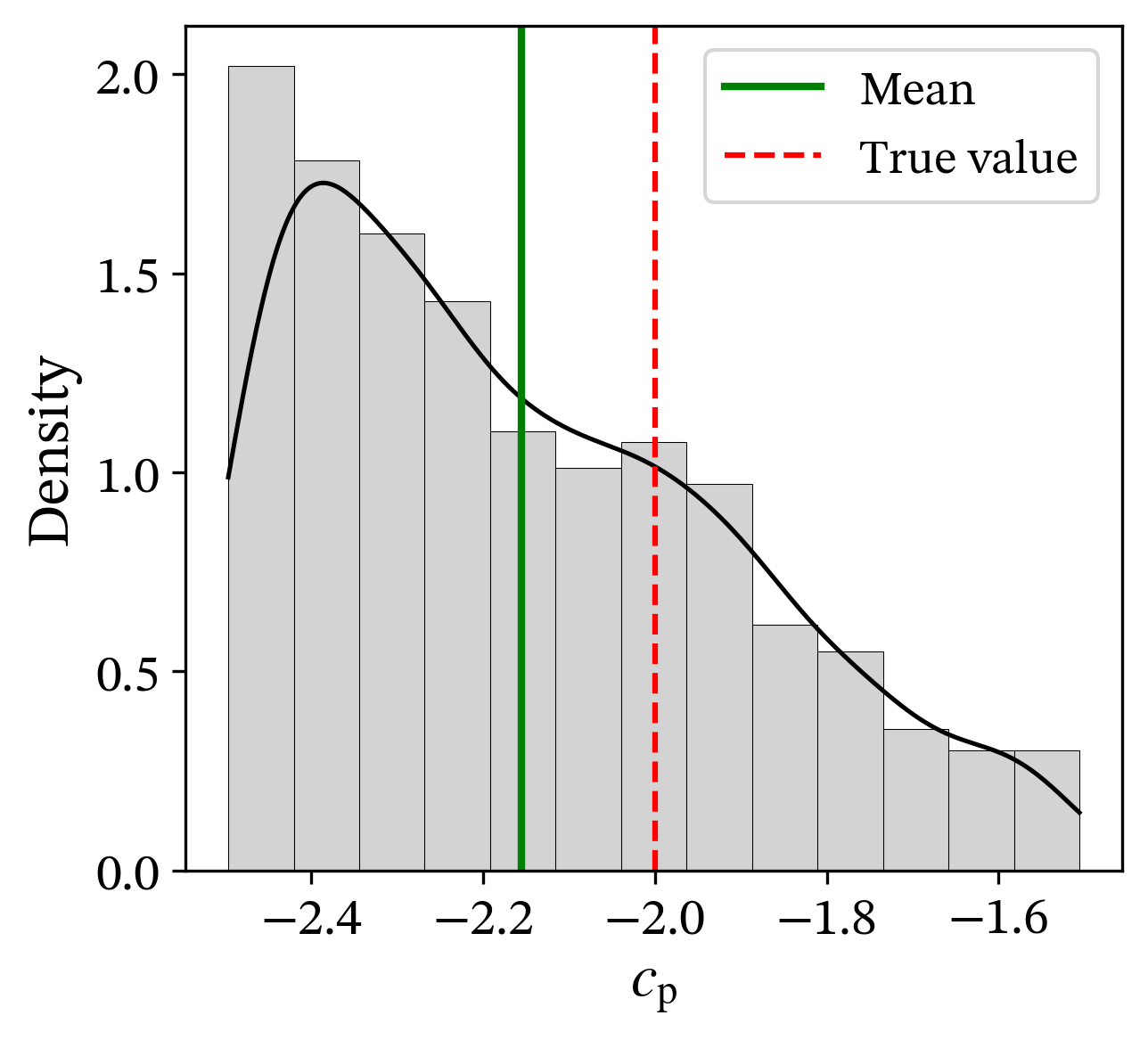}
    \caption{Marginal posterior of $c_{\mathrm{p}}$}
\end{subfigure}
\hfill
\begin{subfigure}[t]{0.45\textwidth}
    \centering
    \includegraphics[width=\linewidth]{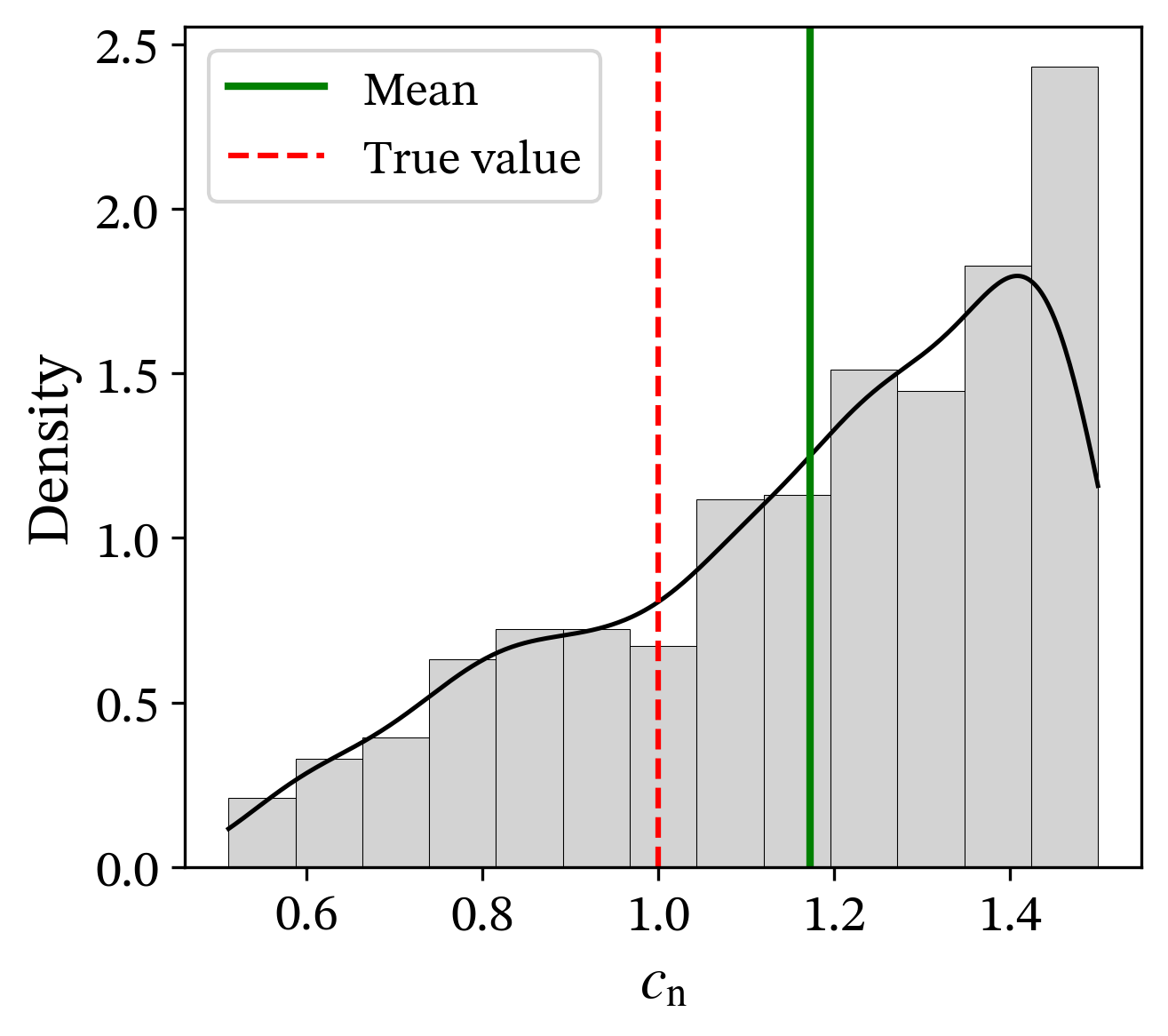}
    \caption{Marginal posterior of $c_{\mathrm{n}}$}
\end{subfigure}

\caption{Joint reconstruction results. (a) Posterior mean and (b) posterior standard deviation of the reconstructed doping field. The black solid line indicates the true pn-junction interface, while the red dashed line indicates the inferred interface. (c)--(d) Marginal posterior distributions of the plateau values \(c_{\mathrm p}\) and \(c_{\mathrm n}\), respectively.}
\label{fig:posterior_field}
\end{figure}

\begin{figure}[!htbp]
\centering
\includegraphics[width=0.65\linewidth]{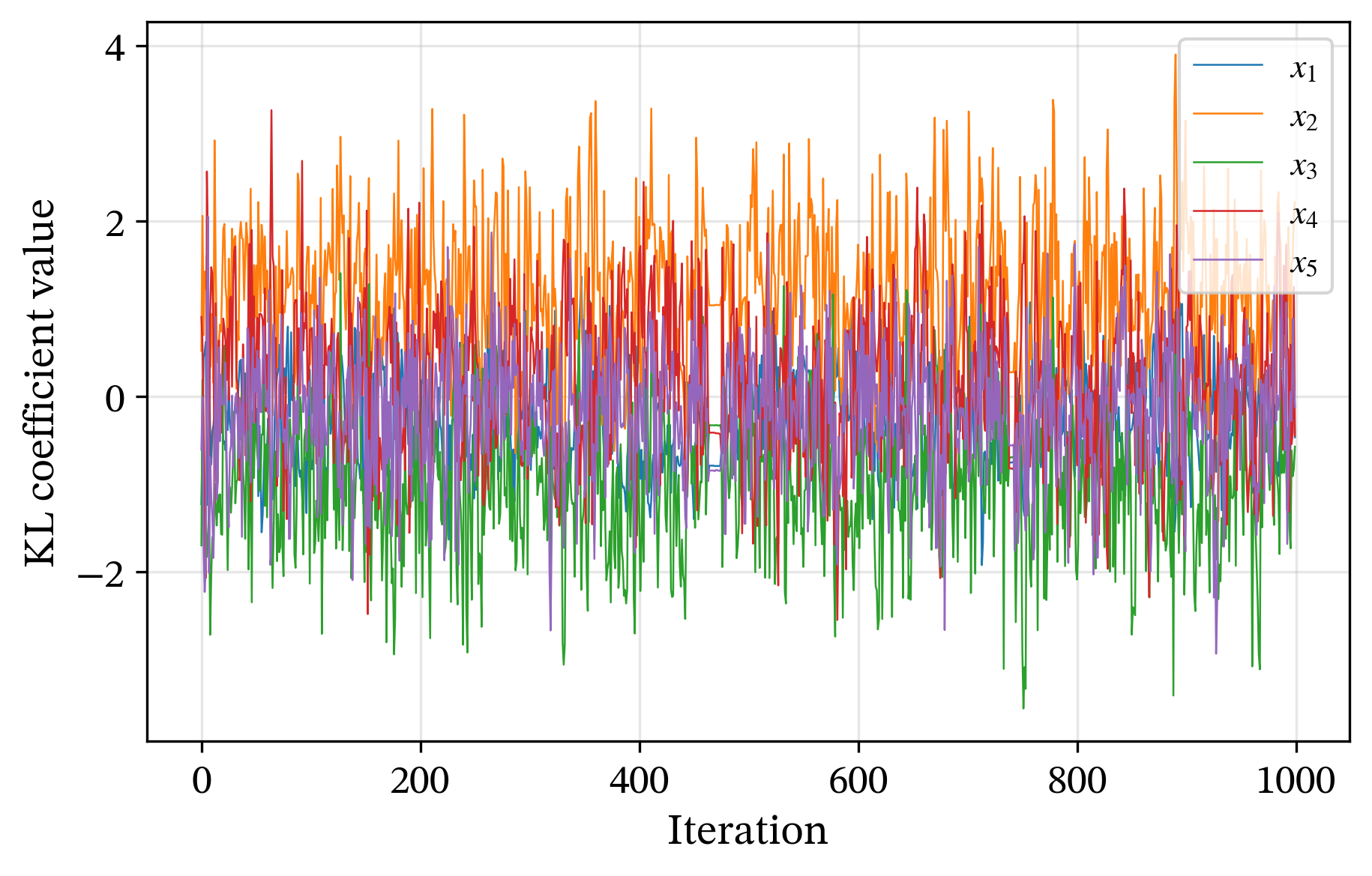}
\caption{Trace plots for representative KL coefficients in the joint reconstruction experiment. The chains exhibit stable mixing without visible stagnation.}
\label{fig:joint_trace}
\end{figure}

\begin{table}[t]
\centering
\caption{Summary of NUTS results for the joint reconstruction setting with \(N_s=1000\) posterior samples.}
\label{tab:joint_reconstruction_summary}
\begin{tabular}{lcccccc}
\hline
Method & RE & SSIM & Runtime (h) & Mean ESS for \(\boldsymbol z\) & ESS \(c_{\mathrm p}\) & ESS \(c_{\mathrm n}\) \\
\hline
NUTS & 0.40 & 0.56 & 10.0 & 862 & 493 & 368 \\
\hline
\end{tabular}
\end{table}

%%%%%%%%%%%%%%%%%%%%%%%%%%%%%%%%%%%%%%%%%%%%%%%%%%%%%%%%%%%%%%%%%%%%%%%%
\section{Conclusions}\label{s:conclusions}
%%%%%%%%%%%%%%%%%%%%%%%%%%%%%%%%%%%%%%%%%%%%%%%%%%%%%%%%%%%%%%%%%%%%%%%%
We developed a Bayesian framework for reconstructing piecewise-constant doping profiles and unknown pn-junction interfaces in semiconductor devices from boundary flux measurements. The approach combines a differentiable sigmoid pushforward prior with adjoint-based gradient computation, enabling NUTS sampling for a nonlinear PDE-constrained inverse problem. The prior construction provides a natural way to represent nearly piecewise-constant doping profiles with sharp interface structure while retaining the differentiability required for gradient-based inference. Well-posedness of the Bayesian formulation is established by proving Lipschitz continuity of the forward map and Hellinger stability of the posterior.

Numerical experiments show that the proposed prior recovers sharp straight and curved interfaces in the known-plateau setting. The posterior standard deviation is concentrated primarily near the reconstructed interface and in regions less constrained by the boundary observations. NUTS yields well-mixed chains with normalized effective sample sizes one to two orders of magnitude larger than pCN. This efficiency gain reflects the compatibility between the differentiable prior, the adjoint-based gradients, and the posterior geometry of this problem, and should not be interpreted as a general property of gradient-based MCMC.

In the joint reconstruction setting, posterior correlations between the plateau parameters and interface depth reveal a structural non-identifiability: changes in doping contrast can be partially compensated by shifts in interface location. This shows that point estimates alone may be misleading and underscores the value of full posterior analysis, including uncertainty fields, parameter correlations, and sampling diagnostics.

The model considered here is a reduced equilibrium approximation and does not include drift-diffusion transport, recombination, generation, or externally driven currents. Therefore, the boundary fluxes should be understood as mathematical observations for a nonlinear elliptic inverse source problem rather than directly measurable diode currents. Extending the framework to stationary or time-dependent drift-diffusion systems is a natural next step.

More broadly, the combination of geometry-aware pushforward priors, adjoint-based gradients, and gradient-informed sampling is applicable to a range of PDE-constrained inverse problems with interface-dominated unknowns, provided that suitable differentiable parameterizations and adjoint formulations are available. In this sense, the semiconductor-inspired problem studied here can serve as a benchmark for Bayesian geometric inversion and uncertainty quantification in nonlinear PDE-based imaging.

%%%%%%%%%%%%%%%%%%%%%%%%%%%%%%%%%%%%%%%%%%%%%%%%%%%%%%%%%%%%%%%%
\appendix
\numberwithin{equation}{section}
\section{Well-Posedness of the Bayesian Reconstruction}\label{s:inverse problem}

In this appendix, we investigate analytical properties of the forward map associated with the nonlinear Poisson--Boltzmann semiconductor model \eqref{eq:semi-PDE-strong}. Our first goal is to study the dependence of the weak solution on the doping profile, which is a source term in the PDE model. For fixed Dirichlet data, we prove that the solution operator is Lipschitz continuous from $L^2(\Omega)$ to $H^1(\Omega)$. For this result and for the unique solvability of the PDE model, we need the following assumption:\\

\noindent {\bf Assumption~1.}\label{assump 1}
 In the semiconductor PDE model~\eqref{eq:semi-PDE-strong},  let $\Omega\subset \R^d, d\leq 3$ be a bounded Lipschitz domain. Assume that 
 \begin{equation}    \epsilon \in L^\infty(\Omega),\qquad 0<\epsilon_0\leq\epsilon(x)\leq\epsilon_1<\infty, \qquad {\text{a.e.}} \,\,{\text{in}}\,\, \Omega,
\end{equation}
and that the doping profile satisfies $c\in L^2(\Omega)$. Furthermore, assume that the Dirichlet data $f_1$ and $f_2$ admit a lifting $g\in H^1(\Omega)$, i.e., 
\begin{equation}\label{eq:lifting}    g|_{\partial\Omega_2}=f_1, \qquad g|_{\partial\Omega_3}=f_2.
\end{equation}

Under Assumption~1 and fixed admissible Dirichlet data, Eq.~\eqref{eq:semi-PDE-strong} defines a well-posed forward problem for semiconductors in the sense that for every $c\in L^2(\Omega)$, the equation admits a unique weak solution $u(c)\in H^1(\Omega)$.

%%%%%%%%%%%%%%%%%%%%%%%%%%%%%%%%%%%%%%%%%%%%%%%%%%%%%%%%%%%%%%%%%%%%%%%%
% {The Well-posedness of the Bayesian  Reconstruction Problem}\label{ss:well-posedness}
\begin{theorem}\label{thm:well-posedness}
    Let Assumption~1 hold. Suppose $c_1, c_2 \in L^2(\Omega)$, and let $u_1$ and $u_2$ denote the corresponding weak solutions to the PDE model~\eqref{eq:semi-PDE-strong}. Then, there exists constant $\eta:=\eta(\epsilon_0,k_P)$%$\eta:=\eta(\epsilon_0^{-1},k_p)$ 
    such that
\begin{equation}\label{eq:Solution-Lipschitz}
     \|u_1 - u_2\|_{H^1(\Omega)}\leq \eta \|c_1 - c_2\|_{L^2(\Omega)},  
    \end{equation}
    where $k_P$ is the Poincaré constant.
%    where $\eta$ depends only on $\epsilon_0$ and the Poincare constant $k_P$.
\end{theorem}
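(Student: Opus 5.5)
Since the Dirichlet data are fixed, both $u_1$ and $u_2$ use the same lifting $g$ from \eqref{eq:lifting}. The difference $e:=u_1-u_2=w_1-w_2$ therefore lies in $V_0$. The plan is to test the difference of the two weak formulations \eqref{eq:semi-weak} with $v=e$. Subtracting gives, for all $v\in V_0$,
\[
\int_\Omega \epsilon\nabla e\cdot\nabla v\,d\boldsymbol x + 2\delta^2\int_\Omega\big(\sinh(u_1)-\sinh(u_2)\big)v\,d\boldsymbol x = \int_\Omega (c_1-c_2)v\,d\boldsymbol x .
\]

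Next, set $v=e$. This is legitimate provided $\sinh(u_i)\,e\in L^1$, which is part of the meaning of a weak solution; see the obstacle below. Since $\sinh$ is monotonically increasing, $(\sinh(u_1)-\sinh(u_2))(u_1-u_2)\ge 0$ pointwise, so the nonlinear term is nonnegative and can be dropped. Using $\epsilon\ge\epsilon_0$ and Cauchy--Schwarz we obtain
\[
\epsilon_0\|\nabla e\|_{L^2}^2\le \|c_1-c_2\|_{L^2}\|e\|_{L^2}.
\]
Because $e$ vanishes on $\partial\Omega_2\cup\partial\Omega_3$, which has positive measure, the Poincaré inequality $\|e\|_{L^2}\le k_P\|\nabla e\|_{L^2}$ holds on $V_0$. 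This gives $\|\nabla e\|_{L^2}\le (k_P/\epsilon_0)\|c_1-c_2\|_{L^2}$. Finally,
\[
\|e\|_{H^1}^2=\|e\|_{L^2}^2+\|\nabla e\|_{L^2}^2\le (1+k_P^2)\|\nabla e\|_{L^2}^2,
\]
so the claim holds with $\eta=k_P\sqrt{1+k_P^2}/\epsilon_0$. This constant depends only on $\epsilon_0$ and $k_P$, and not on $\delta$ or the data.

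The main obstacle is justifying the test function $v=e$ in the nonlinear term. In $d\le 3$, $H^1\subset L^6$, but $\sinh(u)$ need not be integrable for a general $u\in H^1$. I would handle this in one of two ways. The first is to take the solution concept from Assumption~1 to include $\sinh(u_i)\in L^1$ with $\sinh(u_i)u_i\in L^1$, as produced by the standard convex-minimization existence proof. The second is to test instead with a truncation $T_m(e)=\max(-m,\min(m,e))\in V_0\cap L^\infty$. Monotonicity still gives $(\sinh u_1-\sinh u_2)T_m(e)\ge0$, and then one lets $m\to\infty$ via monotone convergence in $\int\epsilon\nabla e\cdot\nabla T_m(e)=\int_{\{|e|<m\}}\epsilon|\nabla e|^2$ and dominated convergence on the right-hand side. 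I would use the truncation argument, since it needs only $\sinh(u_i)\in L^1_{\mathrm{loc}}$ in the weak formulation.
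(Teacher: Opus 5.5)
Your proposal is correct and follows the paper's proof step for step. You test the difference of the weak forms with $u_1-u_2\in V_0$, drop the nonnegative $\sinh$ term by monotonicity, and then use $\epsilon\ge\epsilon_0$, Cauchy--Schwarz and Poincar\'e to reach the same constant $\eta=k_P\sqrt{1+k_P^2}/\epsilon_0$. Your truncation argument adds rigor the paper omits, since it tests with $u_1-u_2$ directly; note only that $T_m(e)$ is bounded but not compactly supported, so you need $\sinh(u_i)\in L^1(\Omega)$ rather than merely $L^1_{\mathrm{loc}}$.
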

\begin{proof}
    Let $c_1, c_2\in L^2(\Omega)$ and $u_i=u(c_i),\,i=1,2$, denote the corresponding weak
solutions of \eqref{eq:semi-PDE-strong} with the same Dirichlet boundary data together with 
    homogeneous Neumann boundary conditions. We follow \eqref{sec:FEM} for weak solutions and define the solution space $V$ and the subspace $V_0$ for the PDE model.
    Setting $W:=u_1-u_2 \in V_0$, subtracting the two PDEs corresponding to the parameter-solution pairs $(c_1,u_1)$ and $(c_2, u_2)$, % yields
    % \begin{equation*}
    %     -\nabla\cdot (\epsilon\nabla W) + 2\delta^2\big(\sinh(u_1)-\sinh(u_2)\big) = c_1 - c_2,
    % \end{equation*}
   testing the model %equation~\eqref{eq:semi-PDE-strong}  
    with $W\in V_0$, and integrating by parts yields       \begin{equation}\label{eq:Variational Formulation}
        \int_\Omega\epsilon|\nabla W|^2 + 2\delta^2\int_\Omega\big(\sinh(u_1)-\sinh(u_2)\big)W = \int_\Omega (c_1 - c_2)W.
    \end{equation}         %\begin{equation}\label{eq:Variational Formulation}
  %      \int_\Omega\big(\sinh(u_1)-\sinh(u_2)\big)W \geq0.
  %  \end{equation} 
    Using the uniform ellipticity $\epsilon_0\leq \epsilon$, we obtain $\epsilon_0\|\nabla W\|_{L^2}^2 \leq \epsilon\int_\Omega |\nabla W|^2$.  According to the strong monotonicity of the nonlinearity of the PDE model, i.e., $\sinh(u_i)$, we have %$\big(\sinh(x)-\sinh(y)\big)(x-y)\geq0,\,\forall x,y\in\R$, which leads to 
    $     \int_\Omega\big(\sinh(u_1)-\sinh(u_2)\big)W \geq0$.   This leads to the variational formulation \begin{equation}\label{eq:Variational Formulation2}
      \epsilon_0\|\nabla W\|_{L^2}^2 \leq \int_\Omega (c_1 - c_2)W, 
    \end{equation} 
    dropping the nonnegative nonlinear term in~\eqref{eq:Variational Formulation}.
By applying the Cauchy-Schwarz inequality, we get
    \begin{equation}\label{eq:RHS estimation}
      \int_\Omega (c_1 - c_2)W\leq\|c_1 - c_2\|_{L^2} \| W\|_{L^2}
       \end{equation} 
    % \leq 2\delta^2\| W\|_{L^2}^2  + \frac{1}{2\delta^2}\|c_1 - c_2\|_{L^2}^2. 
 % \end{equation} 
    Then by using the Poincaré inequality and substituting into \eqref{eq:Variational Formulation2}, we obtain   \begin{equation}\label{eq:Variational Formulation3}
      \epsilon_0\|\nabla W\|_{L^2}^2 \leq \|c_1 - c_2\|_{L^2}k_P\| \nabla W\|_{L^2}, 
    \end{equation} 
   leading to $\|\nabla W\|_{L^2} \leq \frac{k_P}{\epsilon_0}\|c_1 - c_2\|_{L^2}$. On the other hand, using the Poincaré inequality and the definition of the $H^1$-norm, we get $\|W\|_{H^1} \leq \sqrt{1+k_P^2}\|\nabla W\|_{L^2}$. Thus, we obtain
     \begin{equation}\label{eq:final estimation}
     \| W\|_{H^1(\Omega)}\leq \eta\|c_1 - c_2\|_{L^2(\Omega)}, 
    \end{equation}
    where $\eta:=\frac{k_P}{\epsilon_0}\sqrt{1+k_P^2}$ depending only on $\epsilon_0$ and the Poincaré constant $k_P$.
    %, which 
    %results in the global Lipschitz continuity of the solution with respect to the parameter.
\end{proof}
The Lipschitz continuity result is a direct consequence of the uniform ellipticity of the diffusion operator and the monotonicity of the nonlinear term in the PDE model. Thus, the forward map is continuous, ensuring the stability of the model with respect to perturbations in the doping profile. This continuity property plays an essential role in the Bayesian inversion framework, as it is used to establish the well-posedness of the posterior measure and its stability with respect to observations. Furthermore, it provides the analytical foundation for the application of function-space MCMC methods and gradient-based sampling algorithms in the infinite-dimensional setting.

The following lemma states the existence of an a priori estimate for the solution of the Poisson-Boltzmann PDE model for semiconductors:

\begin{lemma}\label{lemma:a-priori estimate}
Let $c\in L^2(\Omega)$ and let $u\in H^1(\Omega)$ be the weak solution of
\eqref{eq:semi-PDE-strong}. Under Assumption~1, there exists a constant
$C>0$, independent of $c$, such that
\[
\|u\|_{H^1(\Omega)}
\le
C\Bigl(
\|c\|_{L^2(\Omega)}
+
\|g\|_{H^1(\Omega)}
\Bigr),
\]
where $g\in H^1(\Omega)$ is a lifting of the prescribed Dirichlet data (see~\eqref{eq:lifting}).
\end{lemma}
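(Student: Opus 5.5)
We write $u = w + g$ with $w\in V_0$, exactly as in the lifting of Section~\ref{sec:FEM}, and estimate $w$ in terms of $c$ and $g$. Testing the lifted weak form \eqref{eq:semi-weak-lifted} with $v=w$ gives
\[
\int_\Omega \epsilon|\nabla w|^2 + 2\delta^2\int_\Omega \sinh(w+g)\,w = \int_\Omega c\,w - \int_\Omega \epsilon\nabla g\cdot\nabla w .
\]
The nonlinear term is not sign-definite as it stands, because $\sinh(w+g)w$ can be negative where $g\neq 0$. We handle it by subtracting and adding $\sinh(g)$:
\[
\sinh(w+g)w = \bigl(\sinh(w+g)-\sinh(g)\bigr)w + \sinh(g)w .
\]
By monotonicity of $\sinh$, the first part is pointwise nonnegative and can be dropped, just as in the proof of Theorem~\ref{thm:well-posedness}. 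This leaves
\[
\epsilon_0\|\nabla w\|_{L^2}^2 \le \bigl(\|c\|_{L^2} + 2\delta^2\|\sinh(g)\|_{L^2}\bigr)\|w\|_{L^2} + \epsilon_1\|\nabla g\|_{L^2}\|\nabla w\|_{L^2}.
\]
Since $w\in V_0$ vanishes on $\partial\Omega_2\cup\partial\Omega_3$, the Poincaré inequality $\|w\|_{L^2}\le k_P\|\nabla w\|_{L^2}$ applies. Dividing by $\|\nabla w\|_{L^2}$ then gives
\[
\|\nabla w\|_{L^2}\le \epsilon_0^{-1}\bigl(k_P\|c\|_{L^2}+2\delta^2k_P\|\sinh g\|_{L^2}+\epsilon_1\|\nabla g\|_{L^2}\bigr).
\]
As before, $\|w\|_{H^1}\le\sqrt{1+k_P^2}\,\|\nabla w\|_{L^2}$, and the triangle inequality $\|u\|_{H^1}\le\|w\|_{H^1}+\|g\|_{H^1}$ completes the assembly.

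The main obstacle is the term $\|\sinh(g)\|_{L^2}$, which is not controlled linearly by $\|g\|_{H^1}$ for a general $H^1$ lifting. We resolve this using the structure of the problem. The Dirichlet data $f_1,f_2$ are the constants $\operatorname{arcsinh}(c_{\mathrm p}/2\delta^2)$ and $\operatorname{arcsinh}(c_{\mathrm n}/2\delta^2)$, so we choose a bounded lifting, for instance a smooth interpolation between the two constants, which can be taken in $W^{1,\infty}(\Omega)$ with $\|g\|_{L^\infty}\le G:=\max|f_i|$. Then $|\sinh(g)|\le \frac{\sinh G}{G}|g|$ pointwise, because $\sinh(t)/t$ is increasing in $|t|$. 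This yields
\[
\|\sinh g\|_{L^2}\le \frac{\sinh G}{G}\,\|g\|_{L^2},
\]
which is linear in $\|g\|_{H^1}$.

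The resulting constant $C$ depends on $\epsilon_0,\epsilon_1,k_P,\delta$ and the bound $G$ of the Dirichlet data, but not on $c$. This is consistent with the statement: in the Bayesian setting the plateaus lie in $[-M,M]$ (Lemma~\ref{thm:exp-int}), so $G$ is uniformly bounded.

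If one prefers to avoid the $L^\infty$ choice of lifting, there is an alternative: in $d\le 3$ a lifting can be chosen with $\|g\|_{L^\infty}$ controlled by the data, since the data are constants on each Dirichlet part. I would state explicitly that the lifting is taken to be bounded.
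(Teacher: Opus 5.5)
Your proposal is correct and is essentially the paper's argument: test the lifted form with $w=u-g\in V_0$, drop the nonnegative monotone part of the nonlinearity, and close with $\epsilon\ge\epsilon_0$ and the Poincaré inequality on $V_0$. The paper's one-line sketch does not address the leftover term $\sinh(g)\,w$. Your bounded lifting handles it, with $C$ then depending on the Dirichlet bound $G$ — which the statement permits, since it only requires $C$ to be independent of $c$ — and this is exactly the detail needed to make the sketch rigorous.
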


\begin{proof}
Testing the equation with $w:=u-g$ with $w\in V_0$ and using the monotonicity of the nonlinearity term $\sinh$, as well as the ellipticity, and the Poincaré inequality yields the result.
\end{proof}

\begin{remark}[Concentration-dependent Dirichlet data]
The stability result in Theorem~\ref{thm:well-posedness} is stated for fixed Dirichlet data. If the boundary potentials are instead induced by scalar plateau concentrations, i.e.,
\[
f_p=\operatorname{arcsinh}\left(\frac{c_p}{2\delta^2}\right),
\qquad
f_n=\operatorname{arcsinh}\left(\frac{c_n}{2\delta^2}\right),
\]
then an analogous estimate holds, provided the corresponding liftings depend Lipschitz-continuously on these boundary data. In particular, since \(s\mapsto \operatorname{arcsinh}(s/(2\delta^2))\) is Lipschitz for fixed \(\delta>0\), the forward map remains stable with respect to perturbations in both the bulk doping profile and the scalar plateau concentrations.
\end{remark}

%%%%%%%%%%%%%%%%%%%%%%%%%%%%%
\subsection{Setting} 

For the rest of this section, we assume that $(X, \mathcal{A}, \mu)$ is a measure space, where $X$ is a set, $\mathcal{A}$ a $\sigma$-algebra on $X$, and $\mu$ a measure which is defined on $\mathcal{A}$.
Furthermore, we define the space 
${\text{meas}}(X, \nu) := \{\mu\,\, {\text{measure on}}\,\, X : \mu \ll \nu\}$,
where $\mu \ll \nu$ means that $\mu$ is absolutely continuous with respect to $\nu$. 

Suppose that the prior measure is given by $\mu_0$, and data $y$  by~\eqref{eq:forward-model}, which is corrupted by Gaussian noise $\varepsilon\sim \mathcal{N}(0,\Sigma_\varepsilon)$. Further assume that $X$ and $Y$ are respectively the parameter and data spaces, and the data negative-log-likelihood function $J\colon X\times Y\to \R$ is given by 
\begin{equation}\label{eq:neg-log-like}
   J(c;y):=\frac{1}{2}\|y-\mathcal{G}(c)\|^2_{\Sigma_\varepsilon},
\end{equation}
where $\|\cdot\|_{\Sigma_\varepsilon}:=\|\Sigma_\varepsilon^{-1/2} \cdot\|_2$. Then, using Bayes' theorem, the posterior measure $\mu^y$ is the conditional distribution of the parameter $c$, given the data $y$, and it is defined as $\mu^y(d c)\propto \exp{(-J(c;y))}\mu_0(d c)$. Since, in infinite-dimensional spaces, the prior measure \(\mu_0\) does not admit a density with respect to a Lebesgue measure, the posterior is defined through the Radon--Nikodym derivative with respect to \(\mu_0\):
\begin{equation}\label{eq:posterior}
  \frac{d \mu^y}{d \mu_0}(c)=\frac{1}{z(y)}\exp{(-J(c;y))},  
\end{equation}
where $z(y):=\int_X \exp(-J(c;y)) d\mu_0(c)$.

We recall (from Section~\ref{ss:doping}) the forward operator 
$\mathcal G := \mathcal O \circ \mathcal W
$,
which maps the doping function $c$ to the boundary measurements.
\begin{theorem}\label{posterior-wellposedness}
 Under Assumption~1, let the solution operator $\mathcal{W}\colon c(\boldsymbol{x})\mapsto u(\boldsymbol{x})$ satisfy the Lipschitz estimate~\eqref{eq:Solution-Lipschitz}, and the observation operator $\mathcal O$ be bounded and linear. Then, 
 \begin{enumerate}
     \item the semiconductor forward operator $\mathcal{G}=\mathcal{O}\circ \mathcal W$
     %(c)=\mathcal{O}(u(c))$
     is Lipschitz continuous, and the associated Bayesian doping inverse problem, described in Section~\ref{ss:doping}, admits a well-defined posterior measure given by \eqref{eq:posterior}, which is absolutely continuous with respect to the prior measure $\mu_0$, i.e., $\mu^y\ll\mu_0$, and

     \item the posterior is stable with respect to the data in the Hellinger metric, i.e, for every $r > 0$, there exists $C(r) > 0$ such that for all data $y_1, y_2 \in Y$ with
$\max\{\|y_1\|_Y , \|y_2\|_Y \} < r$, it holds 
\begin{equation}\label{eq:Hellinger stability}
  d_H(\mu^{y_1},\mu^{y_2}) \leq  C(r) \|y_1-y_2\|_Y. 
 \end{equation}
 \end{enumerate}

\end{theorem}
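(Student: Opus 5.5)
Part 1 follows by composing the two Lipschitz estimates. For $c_1,c_2\in L^2(\Omega)$, boundedness and linearity of $\mathcal O$ together with \eqref{eq:Solution-Lipschitz} give
\[
\|\mathcal G(c_1)-\mathcal G(c_2)\|_2\le \|\mathcal O\|\,\|u_1-u_2\|_{H^1(\Omega)}\le \|\mathcal O\|\,\eta\,\|c_1-c_2\|_{L^2(\Omega)} .
\]
Lemma~\ref{lemma:a-priori estimate} gives the growth bound $\|\mathcal G(c)\|_2\le \|\mathcal O\|\,C(\|c\|_{L^2}+\|g\|_{H^1})$. Note that $\mathcal O$ involves boundary normal fluxes, which are not bounded on $H^1$ in general. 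I therefore interpret the boundedness hypothesis as the standard weak definition of the flux: test against an $H^1$ extension of $\boldsymbol 1_{I_\ell}$, or work in the discrete setting $\mathcal O^h$. This is assumed in the statement, so I take it as given.

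With these bounds I verify the standard conditions of Stuart's framework for $J(c;y)=\tfrac12\|y-\mathcal G(c)\|^2_{\Sigma_\varepsilon}$:
\begin{enumerate}
\item[(i)] $J\ge 0$.
\item[(ii)] For $\|y\|<r$, $J(c;y)\le K(r)(1+\|c\|_{L^2}^2)$.
\item[(iii)] $J(\cdot;y)$ is continuous, indeed locally Lipschitz, in $c$, by the Lipschitz bound on $\mathcal G$.
\item[(iv)] For $\|y_1\|,\|y_2\|<r$,
\[
|J(c;y_1)-J(c;y_2)|\le \tfrac12\|y_1-y_2\|_{\Sigma_\varepsilon}\,\|y_1+y_2-2\mathcal G(c)\|_{\Sigma_\varepsilon}\le K(r)\,(1+\|c\|_{L^2})\,\|y_1-y_2\| .
\]
\end{enumerate}
Continuity of $J$ in $c$ gives measurability, so $\exp(-J)$ is a measurable function in $(0,1]$. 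Hence $z(y)\le 1$. Moreover $z(y)>0$: by (ii) and Chebyshev, $\mu_0(\|c\|_{L^2}\le R)>0$ for large $R$ (this uses $\mathbb E_{\mu_0}\|c\|^2<\infty$), so $z(y)\ge e^{-K(r)(1+R^2)}\mu_0(\|c\|\le R)>0$. Therefore \eqref{eq:posterior} defines a probability measure with $\mu^y\ll\mu_0$. The identification of this measure with the conditional law follows from the standard Bayes theorem for jointly measurable likelihoods: $(c,y)\mapsto J$ is continuous, so $\mu_0\otimes\mathcal N(0,\Sigma_\varepsilon)$ disintegrates accordingly.

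For part 2 I follow the usual Hellinger argument. First, $|z(y_1)-z(y_2)|\le\int|e^{-J_1}-e^{-J_2}|\,d\mu_0\le\int|J_1-J_2|\,d\mu_0\le K(r)\|y_1-y_2\|\int(1+\|c\|)\,d\mu_0$, using $|e^{-a}-e^{-b}|\le|a-b|$ for $a,b\ge0$. Then split
\[
2d_H^2\le \frac{2}{z(y_1)}\int\bigl(e^{-J_1/2}-e^{-J_2/2}\bigr)^2d\mu_0+2\,\bigl|z(y_1)^{-1/2}-z(y_2)^{-1/2}\bigr|^2 z(y_2).
\]
The first term is bounded by $\tfrac1{2z(y_1)}\int|J_1-J_2|^2d\mu_0\le K(r)^2\|y_1-y_2\|^2\int(1+\|c\|)^2d\mu_0$. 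For the second, use $|z_1^{-1/2}-z_2^{-1/2}|^2\le \max(z_1^{-3},z_2^{-3})|z_1-z_2|^2$. Both need a uniform lower bound $z(y)\ge z_{\min}(r)>0$ over the ball $\|y\|<r$; the estimate above already gives this. Combining the two terms yields \eqref{eq:Hellinger stability} with $C(r)$ depending on $r$, $K(r)$, $z_{\min}(r)$ and the second moment of $\mu_0$.

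The main obstacle is the prior-integrability requirement $\int\|c\|_{L^2}^2\,d\mu_0<\infty$, together with the positive mass of bounded balls. For the paper's priors this is provided by Lemma~\ref{thm:exp-int}: $\|C\|_{L^2}\le M|\Omega|^{1/2}$ almost surely, so all moments are finite and every constant above is uniform. In fact $\|c\|$ is then bounded, so $J$ is bounded on the support and $z_{\min}(r)\ge e^{-K(r)(1+M^2|\Omega|)}$ directly. For a general prior $\mu_0$ I would add the assumption $\mu_0(L^2)=1$ with finite second moment. The Dirichlet-data dependence on $(c_p,c_n)$ is handled by the Remark following Lemma~\ref{lemma:a-priori estimate}, which gives Lipschitz dependence on the plateau values as well, so the same argument applies on $\mathbb X\times\mathbb R^2$.
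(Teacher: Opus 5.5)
Your proposal is correct and follows essentially the same route as the paper: Lipschitz continuity of $\mathcal G$ by composing \eqref{eq:Solution-Lipschitz} with the bounded operator $\mathcal O$, then local-Lipschitz and data-Lipschitz bounds on the misfit $J$, a bound on $|z(y_1)-z(y_2)|$, and a two-term split of $\sqrt{d\mu^{y_1}/d\mu_0}-\sqrt{d\mu^{y_2}/d\mu_0}$ integrated against prior moments. Your version is more careful than the paper's in two places: you derive the uniform lower bound $z(y)\ge z_{\min}(r)>0$ over the data ball, which the paper's constants use without proof, and you take the moment bounds from Lemma~\ref{thm:exp-int} instead of appealing to Gaussianity of a prior on $c$ that is not literally Gaussian here.
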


\begin{proof}
  \begin{enumerate}
     \item Using Theorem~\ref{thm:well-posedness} and due to the fact that   the observation map $\mathcal{O}\colon H^1(\Omega)\to \R^{N_{\text{meas}}}$ is bounded linear, we obtain
     \begin{equation}
       \big|\mathcal{G}(c_1) - \mathcal{G}(c_2)\big|\leq C\|u(c_1) - u(c_2)\|_{H^1}\leq C\|c_1 - c_2\|_{L^2}, 
     \end{equation}
i.e., the forward operator is Lipschitz continuous. Furthermore, we need to prove the continuity of the misfit function. To this end, we assume the negative-log-likelihood function $J$, defined in equation~\eqref{eq:neg-log-like} and, for any fixed data $y\in Y=\R^{N_{\text{meas}}}$, we prove the stability of this function with respect to the parameter $c$; 
assuming $c_i \in X$ and using the identity \eqref{eq:neg-log-like}, we obtain
\begin{equation*}
J(c_1;y)-J(c_2;y)
=
\frac12 \langle \Sigma_\varepsilon^{-1}(2y-\mathcal G(c_1)-\mathcal G(c_2)),
\mathcal G(c_1)-\mathcal G(c_2)\rangle.
\end{equation*}
Applying Cauchy--Schwarz inequality and boundedness of $\Sigma_\varepsilon^{-1}$ yields
\begin{equation*}
|J(c_1;y)-J(c_2;y)|
\le C \|2y-\mathcal G(c_1)-\mathcal G(c_2)\|
\|\mathcal G(c_1)-\mathcal G(c_2)\|
\end{equation*}
and thus
$\|\mathcal G(c)\|\le C(1+\|c\|)$, we conclude
\begin{equation}\label{eq:contin. of NLL}
|J(c_1;y)-J(c_2;y)|
\le C(1+\|c_1\|_{L^2}+\|c_2\|_{L^2})\|c_1-c_2\|_{L^2}.
\end{equation}
In particular, $J(\cdot;y)$ is locally Lipschitz and measurable, and $\exp\big(- J(c;y)\big)$ is integrable with respect to the Gaussian prior measure $\mu_0$. Thus, the normalization constant $z(y):=\int_X \exp(-J(c;y)) d\mu_0(c)$ satisfies $0< z(y)<\infty$, and the posterior~\eqref{eq:posterior} is well-defined.

\item For the second part of the proof, we prove the Hellinger stability of posterior measure $\mu^y$; first, we note that, using a similar proof of \eqref{eq:contin. of NLL}, we obtain the continuity of the negative-log-likelihood function with respect to the data, i.e., 
for every $r > 0$, there exists $C=C(r) > 0$ such that for all data $y_1, y_2 \in Y$ and all $c\in X$ with
$\max\{\|c\|_X, \|y_1\|_Y , \|y_2\|_Y \} < r$, it holds 
\begin{equation}\label{eq:continuity of NNL in data}
  \big|J(c;y_1) - J(c;y_2)\big| \leq  C (1+\|c\|_X) \|y_1-y_2\|_Y. 
 \end{equation}
To show equation~\eqref{eq:Hellinger stability}, we define $z_i:=z(y_i)$ and $J_i:=J(c;y_i)$ for $i=1,2$, as well as
\[
f_i(c):= \frac{1}{\sqrt{z_i}}\exp\left(-\frac12J(c;y_i)\right),
\qquad i=1,2.
\]
We introduce the Hellinger distance $d_H$ between the two posterior measures $\mu^{y_1}$ and $\mu^{y_2}$ corresponding to data $y_1$ and $y_2$ by
\begin{align}\label{eq:Hellinger posterior stability}
 d_H^2 (\mu^{y_1}, \mu^{y_2}) &:= \frac12 \int_X \Big(\sqrt{\frac{d\mu^{y_1}}{d\mu_0}} - \sqrt{\frac{d\mu^{y_2}}{d\mu_0}}  \Big)^2 d\mu_0 = \frac12\int_X \Big|f_1 - f_2\Big|^2 d\mu_0. 
\end{align}
where
 \begin{align}\label{eq:Hellinger posterior stability2}
 \Big|f_1 - f_2\Big| \leq \Big| \frac{1}{\sqrt{z_1}} - \frac{1}{\sqrt{z_2}}\Big|
 \exp{(-\frac12J_1)} + 
  \frac{1}{\sqrt{z_2}}  \Big|\exp{(-\frac12J_1)}- \exp{(-\frac12J_2)}\Big| =: I_1+ I_2. 
 \end{align}
Recall $z(y):=\int_X \exp{(-J(c;y))} d\mu_0(c)$. The mean-value theorem gives 
 \begin{equation}
     \Big|e^{-J_1}- e^{-J_2}\Big| \leq C (1+\|c\|_X) \|y_1-y_2\|_Y
 \end{equation}
and integrating with respect to $\mu_0$ yields
\begin{equation}\label{eq:z1-z2}
     \Big|z_1-z_2\Big| \leq C \|y_1-y_2\|_Y \int_X(1+\|c\|_X) d\mu_0(c)\leq C \|y_1-y_2\|_Y,
 \end{equation}
as a Gaussian prior has finite moments.
Therefore, using \ref{eq:z1-z2}, we find a bound for $I_1$ as following
 \begin{equation}
     \Big|\frac{1}{\sqrt{z_1}}-\frac{1}{\sqrt{z_2}}\Big| e^{-\frac12J_1}\leq C \Big|z_1-z_2\Big| \leq C\|y_1-y_2\|_Y ,
 \end{equation}
 and using the mean-value theorem and the estimate \ref{eq:continuity of NNL in data}, a bound for $I_2$ as following:
  \begin{equation}
    \frac{1}{\sqrt{z_2}}\Big|e^{-\frac12J_1}- e^{-\frac12J_2}\Big|\leq C\Big|J_1-J_2\Big| \leq C (1+\|c\|_X) \|y_1-y_2\|_Y.
 \end{equation}
This leads to 
 \begin{equation}
   \Big|f_1-f_2\Big|\leq  I_1+I_2\leq   C (1+\|c\|_X) \|y_1-y_2\|_Y
 \end{equation}
 and thus
 \begin{align}\label{eq:Hellinger posterior stability4}
 d_H^2 (\mu^{y_1}, \mu^{y_2}) \leq  \frac12\int_X \Big|f_1 - f_2\Big|^2 d\mu_0 \leq C \|y_1-y_2\|^2_Y \int_X(1+\|c\|^2_X) d\mu_0(c)
 % \frac12 \int_X (I_1+I_2)^2 d\mu_0 &\leq C \|y_1 - y_2\|_Y^2 \int_X \|c\|_X^2 d\mu_0(c)\\ \nonumber
 \end{align}
 and
 $$d_H (\mu^{y_1}, \mu^{y_2})\leq C \|y_1 - y_2\|_Y,$$
which completes the proof.
\end{enumerate}
\end{proof}

 % The above Theorem shows that the Bayesian posterior measure for the diode model
 % The Bayesian inverse problem, even with a nonlinear level-set (sigmoid) parameterization, remains well-posed. 
 % This ensures robustness of the Bayesian formulation with respect to measurement noise and provides a rigorous foundation for subsequent numerical sampling methods.

 The above theorem shows that the Bayesian inverse problem for the diode model remains well-posed under the proposed nonlinear sigmoid pushforward parameterization of the doping profile. This establishes stability of the posterior with respect to perturbations in the measurement data and provides a rigorous foundation for the numerical sampling methods used in this work.

\section*{Declarations}

\noindent\textbf{Competing interests}
The authors declare that they have no competing interests.

\noindent\textbf{Data availability}
The data used in the numerical experiments are synthetically generated as described in the manuscript.

\noindent\textbf{Code availability}
The source code and example scripts are available at \url{https://github.com/hassanyazdanian/adjoint-bayesian-semiconductor}.

\noindent\textbf{Author contributions}
Hassan Yazdanian, Leila Taghizadeh, and Babak Maboudi Afkham contributed to the study conception and design. Hassan Yazdanian developed the numerical implementation and performed the experiments. Leila Taghizadeh and Hassan Yazdanian prepared the first draft of the manuscript. Leila Taghizadeh and Babak Maboudi Afkham contributed to the mathematical formulation, interpretation of the results, and manuscript revision. All authors read and approved the final manuscript.

%%%%%%%%%%%%%%%%%%%%%%%%%%%%%%%%%%%%%%%%%%%%%%%%%%%%%%%%%%%%%%%%%%%%%%%
\bibliographystyle{unsrt}
\bibliography{References}
%%%%%%%%%%%%%%%%%%%%%%%%%%%%%%%%%%%%%%%%%%%%%%%%%%%%%%%%%%%%%%%%%%%%%%%%
%%%%%%%%%%%%%%%%%%%%%%%%%%%%%%%%%%%%%%%%%%%%%%%%%%%%%%%%%%%%%%%%%%%%%%%%
\end{document}